\def\id{{\fontsize{.85em}{1.1em}\selectfont1}\normalfont\kern-.8ex1}
\title[Asymptotic stability]{On asymptotic stability of the sine-Gordon kink in the energy space}
\author[M.A. Alejo]{Miguel A. Alejo$^*$}
\address{Departamento de Matem\'atica. Universidade Federal de Santa Catarina-UFSC.\\ 
Campus Trindade,  88040-900. Florian\'opolis-SC, Brazil.}
\email{miguel.alejo@ufsc.br}
\thanks{$^{*}$ M.A.  Alejo was partially supported by  CNPq grant no. 305205/2016-1.}
\author[C. Mu\~noz]{Claudio Mu\~noz$^{**}$}
\address{CNRS and Departamento de Ingenier\'{\i}a Matem\'atica and Centro
de Modelamiento Matem\'atico (UMI 2807 CNRS), Universidad de Chile, Casilla
170 Correo 3, Santiago, Chile.}
\email{cmunoz@dim.uchile.cl}
\thanks{$^{**}$ C. M. work was funded in part by Chilean research grants FONDECYT 1191412, project France-Chile
ECOS-Sud C18E06 and CMM Conicyt PIA AFB170001.}
\author[J.M. Palacios]{Jos\'e M. Palacios$^{***}$}
\thanks{$^{***}$ J. M. P.  was partially supported by Chilean research grants FONDECYT 1191412 and project France-Chile
ECOS-Sud C18E06.}
\address{Institut Denis Poisson, Universit\'e de Tours, Universit\'e d'Orleans, CNRS, Parc Grandmont 37200, Tours, France}
\email{jose.palacios@etu.univ-tours.fr}
\date{\today}
\newcommand{\be}{\begin{equation}}
\newcommand{\ee}{\end{equation}}
\newcommand{\bp}{\begin{proof}}
\newcommand{\ep}{\end{proof}}
\newcommand{\bel}{\begin{equation}\label}
\newcommand{\eeq}{\end{equation}}
\newcommand{\bea}{\begin{eqnarray}}
\newcommand{\eea}{\end{eqnarray}}
\newcommand{\bee}{\begin{eqnarray*}}
\newcommand{\eee}{\end{eqnarray*}}
\newcommand{\ben}{\begin{enumerate}}
\newcommand{\een}{\end{enumerate}}
\providecommand{\abs}[1]{\lvert#1 \rvert}
\newcommand{\ve}{\varepsilon}
\newcommand{\R}{\mathbb{R}}
\newcommand{\Com}{\mathbb{C}}
\newcommand{\la}{\lambda}
\newcommand{\al}{\alpha}
\newcommand{\bt}{\beta}
\newcommand{\ga}{\gamma}
\newcommand{\sech}{\operatorname{sech}}
\newtheorem{thm}{Theorem}[section]
\newtheorem{cor}[thm]{Corollary}
\newtheorem{lem}[thm]{Lemma}
\newtheorem{prop}[thm]{Proposition}
\newtheorem{defn}[thm]{Definition}
\theoremstyle{remark}
\newtheorem{rem}{Remark}[section]
\definecolor{codegreen}{rgb}{0,0.6,0}
\definecolor{codegray}{rgb}{0.5,0.5,0.5}
\definecolor{codepurple}{rgb}{0.58,0,0.82}
\definecolor{backcolour}{rgb}{0.95,0.95,0.92}
\lstdefinestyle{mystyle}{
	backgroundcolor=\color{backcolour},   
	commentstyle=\color{codegreen},
	keywordstyle=\color{magenta},
	numberstyle=\tiny\color{codegray},
	stringstyle=\color{codepurple},
	basicstyle=\footnotesize,
	breakatwhitespace=false,         
	breaklines=true,                 
	captionpos=b,                    
	keepspaces=true,                 
	numbers=left,                    
	numbersep=5pt,                  
	showspaces=false,                
	showstringspaces=false,
	showtabs=false,                  
	tabsize=2
}
\numberwithin{equation}{section}
\pgfplotsset{compat=newest}
\theoremstyle{definition}
\numberwithin{ej}{section}
\begin{document}





\renewcommand{\sectionmark}[1]{\markright{\thesection.\ #1}}
\renewcommand{\headrulewidth}{0.5pt}
\renewcommand{\footrulewidth}{0.5pt}


\begin{abstract}
We consider the sine-Gordon (SG) equation in 1+1 dimensions. The kink is a static, \emph{non symmetric} exact solution to SG, stable in the energy space $H^1\times L^2$. It is well-known that the linearized operator around the kink has a simple kernel and no internal modes. However, it possesses an \emph{odd resonance} at the bottom of the continuum spectrum, deeply related to the existence of the (in)famous \emph{wobbling kink}, an explicit periodic-in-time solution of SG around the kink that contradicts the asymptotic stability of the kink in the energy space. 

%

\smallskip

In this paper we further investigate the influence of resonances in the asymptotic stability question. We also discuss the relationship between breathers, wobbling kinks and resonances in the SG setting. By gathering B\"acklund transformations (BT) as in \cite{HW,MP} and Virial estimates around odd perturbations of the vacuum solution, in the spirit of \cite{KMM2}, we first identify the manifold of initial data around zero under which BTs are related to the wobbling kink solution. It turns out that (even) small breathers are deeply related to odd perturbations around the kink, including the wobbling kink itself. As a consequence of this result and \cite{KMM2}, using BTs we can construct a smooth manifold of initial data close to the kink, for which there is asymptotic stability in the energy space. The initial data has spatial symmetry of the form (kink + odd, even), non resonant in principle, and not preserved by the flow. This asymptotic stability property holds despite the existence of wobbling kinks in SG. We also show that wobbling kinks are orbitally stable under odd data, and clarify some interesting connections between SG and $\phi^4$ at the level of linear B\"acklund transformations.
\end{abstract}
\maketitle \markboth{Asymptotic stability of SG kinks} {M.A. Alejo, C. Mu\~noz and J.M. Palacios}
\tableofcontents

\section{Introduction and Main results}

 Consider the 1+1 dimensional sine-Gordon (SG) equation, in physical coordinates $(t,x)$, for a scalar field $\phi$:
\be\label{sg1}
\phi_{tt}-\phi_{xx}+\sin \phi=0. 
\ee 
Here, $\phi =\phi(t,x)$ is a real-valued function, and $(t,x)\in \R^2$. A natural energy space for \eqref{sg1} is given by
\[
(H_{sin}^1 \times L^2)(\R) := \left\{ \vec\phi := (\phi,\phi_t) \in (\dot H^1 \times L^2) (\R)  ~  : ~ \sin \left(\frac{\phi}{2}\right)  \in L^2 (\R)\right\},
\]
where we use the standard notation $\vec\phi :=(\phi,\phi_t)$, corresponding to a wave-like dynamics. This fact essentially follows form the lower order conservation laws called energy and momentum, respectively:
\be\label{Energy}
E[\vec\phi](t)=\dfrac{1}{2}\int_\mathbb{R}(\phi_x^2+\phi_t^2)(t,x)dx+\int_{\mathbb{R}}(1-\cos \phi(t,x))dx=E[\vec\phi](0), 
\ee
and
\be\label{Momentum}
P[\vec\phi](t)=\dfrac{1}{2}\int_\mathbb{R}\phi_t(t,x)\phi_x(t,x)dx=P[\vec\phi](0).
\ee
Real-valued solutions of \eqref{sg1} that initially are in $H^1_{sin}\times L^2$ are preserved for all time, see e.g \cite{DG} and \cite{MP}. Additionally, they are globally well-defined {\color{black}thanks to the fact that $\sin(\cdot)$ is a smooth bounded function}. In what follows, we will assume that we have a real-valued solution of \eqref{sg1} (in vector form) $\vec\phi \in C(\R; H_{sin}^1\times L^2)$. Additionally, small perturbations of a given solution in $H_{sin}^1\times L^2$ are essentially in $H^1\times L^2$, and vice-versa. 

\medskip

Solutions of \eqref{sg1} {\color{black}are known to satisfy} several symmetry properties: shifts in space and time $(t_0,x_0)$, i.e. the mapping $\vec\phi (t,x) \mapsto \vec\phi(t+t_0,x+x_0) $ among SG solutions is preserved, as well as \emph{Lorentz boosts}: for each $\bt\in (-1,1)$, given $\vec\phi(t,x)=(\phi,\phi_t)(t,x)$ solution, then 
\be\label{Lorentz}
(\phi,\phi_t)_\bt(t,x):= (\phi,\phi_t)\big(\ga(t-\bt x),\ga(x-\bt t) \big), \quad \ga:=(1-\bt^2)^{-1/2},
\ee
is another solution of \eqref{sg1}. The parameter $\ga$ is called Lorentz scaling factor, having an important role in the Physics of SG, and in what follows.

\medskip

As for the motivation for studying SG, this equation has been extensively used in differential geometry  (constant negative curvature surfaces), as well as relativistic field theory and soliton integrable systems. The interested reader may consult the monograph by Lamb \cite[Section 5.2]{Lamb}, 
and for more details about the physics of SG, see e.g. Dauxois and Peyrard \cite{DP}, and the recent monographs \cite{JK,KC}.

\medskip

SG has particular (topological) stationary solutions, known as \emph{kinks} \cite{Lamb}:
\be\label{Kink}
Q(x) := 4\arctan e^{x}.
\ee
This exact solution connects the final states 0 and $2\pi$. Thanks to Lorentz \eqref{Lorentz} and translation invariances, it is possible to define a kink of arbitrary speed $\bt\in (-1,1)$ and shift $x_0\in\R$, given by 
\be\label{Kink_lorentz}
Q(t,x;\bt,x_0) := 4\arctan (e^{\ga(x-\beta t+x_0)}), \qquad \ga=(1-\bt^2)^{-1/2}.
\ee
From the integrability of SG \cite{AKNS,ZS}, interactions between kinks are elastic, i.e. they are ``solitons'' in the strict sense of the word \cite{Lamb}. 
Also, $-Q(x)$ is another stationary solution of SG, usually called \emph{anti-kink}. 

\medskip

It is well-known that $(Q,0)$ is \emph{orbitally stable} under small perturbations in the energy space $(H^1\times L^2)(\R)$, see Henry-Perez-Wreszinski \cite{HPW}. More precisely, there exists $C_0>0$ such that, for all sufficiently small $\eta>0$,
\be\label{Orbital}
\begin{aligned}
&\| (\phi,\phi_t)(t=0) - (Q,0)\|_{H^1\times L^2} <\eta \\
& \qquad \qquad  \implies \sup_{t\in\R} \| (\phi,\phi_t)(t) - (Q,0)(\cdot-y(t))\|_{H^1\times L^2} <C_0\eta,
\end{aligned}
\ee
for some $y(t)\in \R$. Using the B\"acklund transformation present for SG, and extensively mentioned below, Hoffman and Wayne \cite{HW} extended this stability result to the case of the kink and sketched the case of several kink structures. Inspired by this work, and using the same technique, in a recent work \cite{MP} the three main 2-soliton solutions of SG were proved to be \emph{orbitally stable} for small perturbations in the energy space. 
In that paper, 2-kinks solutions \eqref{R_0} were considered, but also breathers (see \eqref{breather} below) and kink-antikinks, two additional 2-soliton solutions which are \emph{even} in space. All of them were shown to be orbitally stable for small perturbations in $H^1\times L^2$. 

\medskip

In this paper we consider the \emph{asymptotic stability}  (AS) problem for the SG kink in the energy space. More precisely, we would like to understand the possible final states allowed by \eqref{Orbital}. As we will explain below, this is not a simple problem, because of several intriguing ingredients. Our main result, stated in few words, claims the following.

\begin{thm}\label{MT00} There exists a smooth infinite codimensional manifold $\mathcal M_{\eta,0}$ of initial data $(\phi_0,\phi_1)$ of the form
\be\label{initial_data0}
 (\phi_0,\phi_1)= (Q + u_0, s_0), \quad u_0 \hbox{ odd}, \; s_0 \hbox{ even}, \quad \Vert (u_0, s_0) \Vert_{H^1\times L^2} <\eta\ll1,
\ee
of zero momentum \eqref{Momentum}, under which the SG kink $Q$ in \eqref{Kink} is asymptotically stable in the energy space.
\end{thm}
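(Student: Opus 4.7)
The plan is to use the B\"acklund transformation (BT) as a nonlinear change of variables that unfolds the kink $Q$ into the vacuum solution $\phi\equiv 0$, and then to import to the kink side the asymptotic stability result of \cite{KMM2} for the vacuum under odd perturbations, obtained via virial estimates. The whole strategy rests on the symmetry assumption in \eqref{initial_data0}: $(u_0,s_0)$ being (odd, even) around $Q$ should correspond, through BT, to an odd perturbation around $0$, which is precisely the class for which the wobbling-kink/breather obstruction disappears and virial decay is available.

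First I would make the BT map precise. Following \cite{HW,MP}, one has a one-parameter BT connecting any two SG solutions $\vec\phi$ and $\vec\psi$ that satisfy an auxiliary first-order system. Specializing one factor to $\vec\phi(0)=(Q+u_0,s_0)$ close to $(Q,0)$, the BT produces a smooth nonlinear map
\[
\mathcal{B}:(u_0,s_0)\longmapsto (v_0,w_0),
\]
defined on a small $H^1\times L^2$-neighborhood of $0$, with the property that the corresponding SG solutions $\vec\phi(t)$ and $\vec\psi(t)$ remain joined by the BT for all $t\in\R$. Two properties need to be checked: (a) $\mathcal{B}$ is a local diffeomorphism of the subspaces of fixed parity; (b) the (odd, even) constraint at the kink, together with the zero-momentum condition \eqref{Momentum}, corresponds on the vacuum side to a genuinely odd small perturbation in $H^1\times L^2$, with a compatibility that fixes the free BT translation parameter. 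Second, on the vacuum side I would invoke the odd-data asymptotic stability for $\phi\equiv 0$ mentioned in the abstract and supplied by \cite{KMM2}: for odd $(v_0,w_0)$ small in $H^1\times L^2$,
\[
\Vert \vec\psi(t) \Vert_{H^1\times L^2(I)}\longrightarrow 0 \quad \text{as } t\to\pm\infty
\]
on every compact interval $I\subset\R$. The manifold $\mathcal{M}_{\eta,0}$ is then defined as the preimage under $\mathcal{B}$ of this admissible set of odd small data; its infinite codimension in $H^1\times L^2$ is inherited from the infinite codimension of the odd/even symmetry conditions.

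Third, I would transfer the conclusion back. Because the BT is pointwise (in time) a smooth map between $(\phi,\phi_t)$ and $(\psi,\psi_t)$ on bounded spatial regions, the local-in-space convergence $\vec\psi(t)\to 0$ translates into local-in-space convergence $\vec\phi(t)\to (Q(\cdot-y(t)),0)$ for some modulation parameter $y(t)\in\R$. Combined with the orbital stability \eqref{Orbital} already recalled from \cite{HPW,HW}, this delivers asymptotic stability of $Q$ in the energy space along every trajectory issued from $\mathcal{M}_{\eta,0}$. The main obstacle, and the whole reason the parity constraint appears, is the wobbling kink: without the (odd, even) splitting, BT would produce data on the vacuum side containing a small breather component, which is even in $x$ and neither decays nor is captured by the virial multiplier of \cite{KMM2}. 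The symmetry hypothesis in \eqref{initial_data0} together with $P[\vec\phi]=0$ is precisely what suppresses this non-dispersive channel and makes the chain (BT) $+$ (virial) $+$ (inverse BT) close inside $H^1\times L^2$.
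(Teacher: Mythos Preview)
Your overall strategy is correct and matches the paper: construct the manifold via BT from small data near the vacuum, apply the odd-data asymptotic stability of \cite{KMM2} there, and lift back to the kink. However, there is a genuine gap in your parity analysis that you should repair.

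The claim that the (odd, even) constraint at the kink ``corresponds on the vacuum side to a genuinely odd small perturbation'' is not correct as stated. A direct computation with the B\"acklund functionals (Lemma~\ref{Parity}(c)) shows that $(\widetilde u_0,\widetilde s_0)\in H^1_o\times L^2_e$ around $Q$ corresponds under BT to $(y_0,v_0)\in H^1_o\times L^2_e$ around zero, \emph{not} to $(y_0,v_0)\in H^1_o\times L^2_o$. The result of \cite{KMM2} requires $(y,v)(t)\in H^1_o\times L^2_o$ for all time, which forces the \emph{initial} vacuum data to be (odd, odd). The only way to reconcile $v_0\in L^2_e$ with $v_0\in L^2_o$ is $v_0=0$. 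Thus the manifold $\mathcal M_{\eta,0}$ is not the preimage of ``odd small data'' but the \emph{image} under $\Phi$ of the much smaller set $\{(y_0,0,0):y_0\in H^1_o,\ \|y_0\|_{H^1}<\eta\}$; this is exactly where the infinite codimension comes from, not from the parity constraint alone. The zero-momentum condition does not ``fix a free BT translation parameter'' either: it corresponds to choosing the BT parameter $\delta=0$ (Lemma~\ref{Mom_Backlund}), independently of the $v_0=0$ restriction.

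A second, more technical point: your transfer-back step is too quick. The lifted solution near $Q$ acquires a time-dependent shift $\rho(t)$ (Section~\ref{8}), and once $\rho(t)\neq 0$ the parities are no longer preserved, so the BT must be re-solved at each time under an orthogonality condition rather than a parity condition (Section~\ref{9}). Controlling $\rho'(t)$ quadratically in terms of weighted norms of $(y,v)$ (Section~\ref{10}) is essential, and even then one only obtains a dichotomy: either $\rho(t_n)\to\pm\infty$ along a subsequence, or $\rho(t)$ converges (Theorem~\ref{MT0}(2)). Your sketch should acknowledge that local convergence of $\vec\psi(t)\to 0$ does not immediately give local convergence of $\vec\phi(t)$ to a fixed translate of $Q$, because the exponential weights in the BT and in \eqref{Integration0} are centered at different points.
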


What do we mean by \emph{asymptotically stable} in this setting, and what kind of manifold are we talking about, is something that we have to explain in detail, but it requires the introduction of several additional ingredients. These ingredients are the so-called wobbling kinks, breathers, (spectral) resonances and B\"acklund transformations,  and we deeply think that they are certainly necessary to fully understand Theorem \ref{MT00}. A key element for the proof of Theorem \ref{MT00} is to understand how spatial parity properties relate under B\"acklund transformations, a subject left out in our previous paper \cite{MP}, and schematically explained in Figs. \ref{Fig:1} and \ref{Fig:2}. The impatient reader can directly go to Theorem \ref{MT0} to read a detailed description of our main result. 

\subsection{Wobbling kinks}\label{1p1}
Proving Theorem \ref{MT00} is not direct, essentially because of the existence, near the static kink, of arbitrarily close \emph{wobbling kinks} in SG \cite{Segur1,Segur2}, \cite[Thm. 2.6]{CQS} (see also references therein and \cite[Remark 1.3]{KMM}).

\medskip

Recall the kink \eqref{Kink}. Wobbling kinks are explicit solutions $W_\beta=W_\beta(t,x)$, $\beta\in (-1,1)$, to the SG equation \eqref{sg1}, which behave as periodic in time, localized perturbations of the static kink solution:\footnote{Note also that $W_\beta$ is defined using the multi-valued, complex-valued function $\hbox{Arg}$, in order to avoid undesirable jumps obtained by using $\arctan$.}
\be\label{wobbling}
\begin{aligned}
W_\beta(t,x):= &~{} 4~\hbox{Arg} \left( U_\beta + i V_\beta\right),\\
 U_\beta:=&~{}  \cosh(\beta x) + \beta \sinh (\beta x)  -\beta e^{ x} \cos(\alpha t) \\
 V_\beta:= &~{} e^x \left(\cosh(\beta x) -\beta \sinh (\beta x)  -\beta e^{-x} \cos(\alpha t)  \right) , \quad \al:=\sqrt{1-\beta^2}.
 \end{aligned}
\ee
See Fig. \ref{Fig:3} for a graphic depiction of this solution. Formally, wobbling kinks are solutions of the form \emph{kink $+$ breather}, where a breather is a periodic in time solution of SG, for reasons to be explained below. Note also that $W_\beta$ reduces to the SG kink \eqref{Kink} as $\beta\to 0$. By construction, when $\beta\neq 0$, these modes never converge to a final state, no matter how close they are to the kink $Q$. Therefore, as already stated in \cite[Remark 1.3]{KMM}, \emph{SG kinks are not asymptotically stable in the energy space}. 

\begin{figure}[h!]
   \centering
   \includegraphics[scale=0.5]{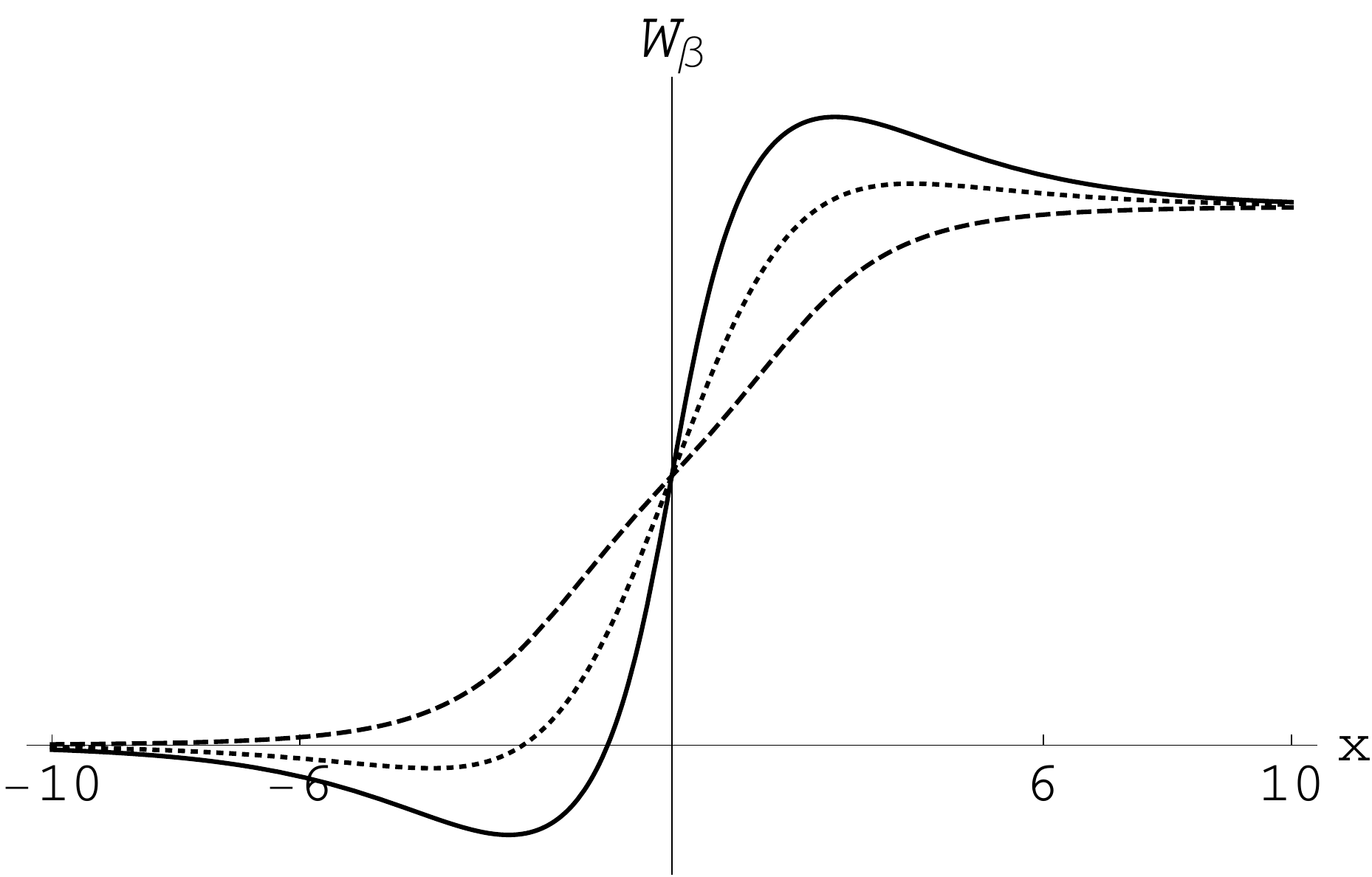}
   \caption{The wobbling kink \eqref{wobbling} with $\beta=0.5$ at times $t=0$ (continuous curve), $t=2$ (dashed curve), and $t=6$ (dotted curve).}
   \label{Fig:3}
\end{figure}

\medskip

Consequently, any result concerning the long time behavior of SG kinks (see Theorem \ref{MT00}) will require to take into account these counterexamples (and probably others) to the existence of final states.

\medskip

For further purposes, we will need the following standard notation: for $m=0,1,2,\ldots,$ denote
\be\label{Espacios_paridad}
\begin{aligned}
H^m_e(\R) := &~{} \{ f \in H^m(\R) ~  : ~  f \hbox{ is even in space}  \},\\
H^m_o(\R) := &~{} \{ f \in H^m(\R) ~  : ~  f \hbox{ is odd in space}  \}.
\end{aligned}
\ee
As usual, we denote $L^2_e(\R)=H^0_e(\R)$ and $L^2_o(\R)=H^0_o(\R)$. Our second result of this paper is related to the orbital stability of the wobbling kink, under odd perturbations. 
 
\begin{thm}\label{WK_orbital}
The SG wobbling kink is orbitally stable under small $H^1_o\times L^2_o$ perturbations. 
\end{thm}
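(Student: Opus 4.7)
The plan is to transfer the orbital stability of small breathers under even perturbations to the wobbling kink via a B\"acklund transformation (BT), following the strategy initiated in \cite{HW} and \cite{MP} but tracking spatial parity carefully.

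First, I would view the wobbling kink $W_\beta$ as a one-step BT image starting from a small even breather $B_\beta$, so that the nonlinear superposition of kink and breather produces $W_\beta$. By \cite{MP}, breathers are orbitally stable in $H^1 \times L^2$; since spatial evenness is preserved by the SG flow (the equation is even under $x \mapsto -x$), small perturbations $(v_0,v_1) \in H^1_e \times L^2_e$ of $B_\beta$ generate global solutions remaining $H^1_e \times L^2_e$-close to the breather orbit, up to the natural modulations in shift and internal phase.

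The heart of the argument is a parity analysis of the BT, sharpening the diagrams in Figs. \ref{Fig:1} and \ref{Fig:2}. I would prove that the nonlinear BT is a bi-Lipschitz bijection between a small $H^1_e \times L^2_e$ neighborhood of $B_\beta$ and a small $H^1_o \times L^2_o$ neighborhood of $W_\beta$. The parity exchange is read off from the BT formulas, which involve the kink background and spatial derivatives in such a way that the associated linearized operator flips parity in a controlled manner. Invertibility on the parity-symmetric subspaces then follows by an implicit function / contraction argument, provided the linearized BT at the reference pair $(B_\beta,W_\beta)$ has no kernel in the relevant odd (resp. even) sector; this last point is the analogue at the wobbling-kink level of what \cite{MP} established at the kink level, and should follow from an explicit analysis of the linearization along the already-known explicit BT formulas.

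Granted this parity-preserving BT isomorphism, the conclusion is immediate: given $(u_0,u_1) \in H^1_o \times L^2_o$ of size $\eta \ll 1$ around $W_\beta$, the inverse BT produces an even perturbation $(v_0,v_1)$ of $B_\beta$ of size $O(\eta)$; the corresponding SG solution stays $O(\eta)$-close in $H^1_e \times L^2_e$ to the breather orbit by the stability recalled above; and pushing this solution forward through the direct BT yields a SG solution $O(\eta)$-close in $H^1_o \times L^2_o$ to the wobbling kink orbit. The main obstacle will be the quantitative, parity-sectorial bi-Lipschitz control of the (nonlinear) BT uniformly in time; a secondary but delicate technical point will be to match the natural modulation parameters of the breather orbit (shift and internal phase) with admissible modulation parameters for the wobbling kink, so that the final statement is phrased in terms of the correct family of wobbling profiles rather than a spurious larger orbit.
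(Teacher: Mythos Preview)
Your proposal is correct and follows essentially the same route as the paper. The paper makes your ``parity-preserving BT isomorphism'' precise via two implicit-function results (Propositions~\ref{THMA} and~\ref{THMB}), one for each direction, by explicitly solving the linearized BT ODE with an integrating factor and checking the parity of the kernel; your anticipated modulation issue is resolved exactly as you suggest, since oddness forces the spatial shift to vanish and only the time (phase) shift $x_1(t)$ of the breather survives.
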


A more quantitative version of this result is given in Theorem \ref{WK_orbital0}. Whether or not the wobbling kink is orbitally stable under general perturbations depends on the definition of \emph{wobbling kink solution}. Precisely, for some particular initial data one can see (see Lemma \ref{3soliton}) that the wobbling kink structure as itself (periodic in time, odd perturbations of a kink) is destroyed; however, this is because the wobbling kink \eqref{wobbling} is part of a more general family of topological 3-soliton solutions consisting of a kink and an attached static/moving breather. This phenomenon is similar to the case of NLS breathers/2-solitons, which are part of a whole family, see e.g. \cite{AFM} for details. The stability of the whole 3-soliton family remains an interesting open problem.

\medskip

SG can be also described using Inverse Scattering Techniques (IST) (recall that SG is an integrable model \cite{AKNS,ZS,Lamb}). Some spatial decay hypotheses are needed to define the associated scattering data (or Riemann-Hilbert problem), and data only the energy space are not well-suited for those methods. 
Also, the dynamics around kinks is usually not treated because of its unusual limit at infinity. Therefore, a description of the (wobbling) kink dynamics as in Theorems \ref{MT00} and \ref{WK_orbital} for data only in the energy space is far from obvious, and not known as far as we understand. However, the IST description, when made rigorous, is far more accurate than ours. The interested reader can consult the recent monograph by Klein and Saut \cite{KS_ISTPDE} for a complete description of this fascinating topic on IST vs. PDE techniques. The integrable character of SG was proved in \cite{AKNS,ZS}. Some early descriptions of the dynamics can be found in Ercolani, Forest and McLaughlin, \cite{EFM}. Birnir, McKean and Weinstein \cite{BMW} studied nonexistence of breathers for perturbations of SG formally using B\"acklund transformations. Denzler \cite{D} improved this result by considering more nonlinearities. See also Vuillermot \cite{Vuillermot} and Kichenassamy \cite{Kich}, and the monograph by Schuur \cite{Sch} for more details on the methods. 
See also \cite{Mas} for a recent construction of invariant soliton manifold for perturbed SG equations. A completely rigorous result on nonexistence of odd breathers can be found in \cite{KMM2}. 

\medskip

The wobbling kink in SG \eqref{wobbling} was first discovered by Segur \cite{Segur1} 
(see also \cite{Segur2}), while searching for wobbling kink solutions for $\phi^4$ (see Section \ref{2}). Using Inverse Scattering 
techniques and a permutability theorem \cite{Lamb}, the wobble \eqref{wobbling} is easily found as a solution consisting of a static 
kink plus an attached breather, exactly as expressed in \eqref{wobblingK}. The same procedure for the $\phi^4$ model \eqref{phi4} seems not to work (i.e. there is no wobbling kink), as the authors pointed out in \cite{Segur2}. A more rigorous proof was given in \cite{KMM}, in the case of (odd, odd) data, but for general data the question remains largely open. In this paper we answer parallel questions for the SG case, which enjoys far more algebraic properties than $\phi^4$, although they meet nicely at the linear level, see Section \ref{2}. This close connection between SG and $\phi^4$ has fascinated to plenty of authors in the mathematical physics community since past forty years; see e.g. the monographs \cite{KC,JK} for further details. In this paper, we also explore this connection in terms of the components needed for the proof of Theorems \ref{MT00} and \ref{WK_orbital}, see in particular the bridge between Theorems \ref{WK_orbital} and Theorem \ref{MT00}, which is Section \ref{5}.

\medskip

Is the wobbling kink asymptotically stable for odd data? Clearly not. Fix $\beta\in (0,1)$. Then the initial perturbation of the wobbling kink $W_\beta(t,x)$ given by $W_{\beta'}(0,x)$, with $\beta'\sim \beta$ does not converge to the wobbling kink $W_\beta$. This means that wobbling kinks are not AS. The problem of asymptotic stability of the wobbling kink for a manifold of initial data just as in Theorem \ref{MT00} remains an interesting open question.

\medskip

Another point of view under which Theorem \ref{MT00} can be put in context, is the one associated to generalized Korteweg-de Vries (gKdV), nonlinear Schr\"odinger (NLS) and Klein-Gordon (NLKG) equations and their associated soliton dynamics. We first focus on the NLKG case, closely related to SG. Soffer and Weinstein \cite{SW} successfully solved the intriguing interaction between solitons and radiation in 3D  KLKG. A complete description of the invariant manifolds around the 1D NLKG soliton for supercritical powers was also described in \cite{KNS}, recently extended in \cite{KMM3}, based in previous results by Bizo\'n et. al. \cite{BCS}. See the monograph \cite{NS1} for a complete account of the methods developed by Krieger, Nakanishi and Schlag in the case of Klein-Gordon theories in several dimensions, and motivated by earlier fundamental results in this area by Bates and collaborators \cite{BJ,BLZ}. For generalized KdV equations, see the works by Pego and Weinstein \cite{PW} and Martel and Merle \cite{MMarma,MMnon,MaMe}. Martel, Merle and Tsai \cite{MMT1} showed the stability of the sum of $N$ solitons in general gKdV equations. The recently written review paper \cite{KoMaMu2} contains a more complete description of the remaining NLS case, and of the literature around this important subject.

\medskip

If the background is not soliton like, there are also important results to mention. Delort \cite{Delort,DeF} considered the global existence and scattering of small solutions to quasilinear NLS and NLKG equations. Bambusi and Cuccagna \cite{BC} considered the NLKG dynamics around the zero state. Other recent results concerning the scattering of small solutions in NLKG equations can be found in \cite{LS,Ste}. 

\medskip

As for kink structures is referred, and their asymptotic stability, there are several works on this subject. Merle and Vega \cite{MV} showed asymptotic stability of the modified KdV kink (see also \cite{AMV,Mu}). Kopylova and Komech \cite{KK1,KK2} considered the case of kink structures in scalar field models with higher nonlinearities. The kink in the $\phi^4$ model was treated in \cite{KMM}, as previously explained. Finally, see \cite{Snelson} for the final state of a variable coefficients $\phi^4$ kink.

\medskip

The results previously proved in \cite{KMM,KMM2}, and the ones in this paper, make strong use of the parity of the initial data. Here we also consider particular parity for initial data even if it is not preserved in time. The use of parity in wave like equations is not a new subject, but it has had some increasing use in the previous years.
Kenig et al. \cite{KLLS} considered energy channels for wave equations in odd dimensions, where initial 
data of the form $(f,0)$ and $(0,g)$ were considered, much in the spirit of the generator of the manifold $\mathcal M_{\eta,0}$ considered in this paper. However, it seems here that our results are the first ones where this symmetry is not respected by the flow.

\medskip

We believe that some of the results here proved can be extended to more general solutions of SG, for instance, to the case of 2-kinks, wobbling kinks, or the so-called modified KdV kinks \cite{MV,Mu}. Concerning the first case, a 2-kink is a solution of SG that behaves as the elastic interaction between two kinks. 
In the SG case, this solution is explicit, 
and given by (see Lamb \cite[pp. 145--149]{Lamb}\footnote{Note that in our previous paper \cite[eqns. (1.6) and (1.7)]{
MP}, there is a missing $\beta$ in the definition of the 2-kink ${\color{black}\mathcal{R}}(t,x)$ and kink-antikink $A(t,x)$.}):
\be\label{R_0}
\begin{aligned}
{\color{black}\mathcal{R}}(t,x;\bt)= &~{} 4\arctan \left( \bt \frac{\sinh(\ga x)}{\cosh(\ga \beta t)} \right), \quad \bt\in (-1,1),\quad \bt \neq 0.
\end{aligned}
\ee
Here $\beta$ is the scaling factor (or speed), and $\ga=(1-\bt^2)^{-1/2}$ is the usual Lorentz factor. 
The 2-kink represents the interaction of two SG kinks with speeds $\pm\bt$, with limits as $x\to \pm \infty$ equal to $-2\pi$ and $2\pi$ 
respectively (i.e., ${\color{black}\mathcal{R}}$ does not decay to zero). Note that ${\color{black}\mathcal{R}}$ is \emph{odd} in $x$ and \emph{even} in $t$. 
Also recall that this solution was proved to be stable \cite{MP}. In another direction, the extension of Theorem \ref{MT00} 
to the case of breathers \eqref{breather} is a challenging problem, first of all, because it will be necessary to identify the correct 
perturbative manifold for decay. See also \cite{AMP1,AMP2,Mu4} for other early stability results in the case of breathers and \cite{Mu3} 
for a simple account of stability results in integrable and nonintegrable equations.

\subsection*{Organization of this article}
This article is organized as follows. Section \ref{2} presents preliminaries that we will need along this paper, in particular, resonances in $\phi^4$ and SG around kink solutions. Section \ref{3} deals with the B\"acklund transformations in the SG case. Section \ref{4} refers to the action of the BT on certain parity manifolds, and contains the proof of Theorem \ref{WK_orbital} (see Theorem \ref{WK_orbital0}). Section \ref{5} is devoted to the study of the linearized BT around the SG and $\phi^4$ kinks.  Section \ref{7} contains the construction of the initial data and the zero-momentum manifold $\mathcal M_{\eta,0}$. Section \ref{8} deals with the modulation of the evolution. Section \ref{9} concerns with the lifting of the data around zero towards the kink solution, Section \ref{10} focus on estimates on the shift parameters on the kink, and finally Section \ref{11} is devoted to the end of proof of Theorem \ref{MT0}. 

\subsection*{Acknowledgements} We thank Y. Martel for several interesting discussion along the preparation of this work. Part of this work was done while the authors were visiting the Departamento de Matem\'atica Aplicada de Granada, UGR, Spain, whose hospitality is greatly and warmly acknowledged. C. M. also acknowledges the hospitality of the Laboratoire de Math\'ematiques d'{}Orsay, and Ecole Polytechnique (France), where part of this work was done. 

\medskip

\section{Breathers and resonances in $\phi^4$ and SG}\label{2}

This section is devoted to introduce some notation and key elements for forthcoming sections. Of particular interest will be the following three ingredients: $(i)$ the introduction of the $\phi^4$ model and its spectral properties (internal modes, resonances, etc.), useful in Section \ref{5}; $(ii)$ the SG spectral problem and its connection to the wobbling kink, also useful for Section \ref{5}, and finally, $(iii)$ the SG breather and its relationship via parity manifolds with the asymptotic stability problem around the vacuum, a result from \cite{KMM2} shall play a key role on the proof of Theorem \ref{MT00} (see Theorem \ref{thmkmm2}). We start out by recalling the definition of breather.

\subsection{Breathers}

A breather is a periodic in time, localized solution of SG around zero. The most famous example of breather is given by the formula \cite{Lamb}
\be\label{breather}
B_\beta(t,x)= 4\arctan \left(\frac{\bt}{\al} \frac{\sin (\al t)}{\cosh(\bt x)} \right), \quad \al=\sqrt{1-\bt^2}, \quad \beta\neq 0, \quad \beta\in (-1,1).
\ee
See Fig. \ref{Fig:4} for a picture of the breather at different times. This solution is stable \cite{AMP1,MP}, and for $\beta$ small contradicts the asymptotic stability of the vacuum in the energy space. The reader may consult \cite{AM,AM1} for more details on breather solutions and their stability.

\begin{figure}[h!]
\begin{center}
   \includegraphics[scale=0.5]{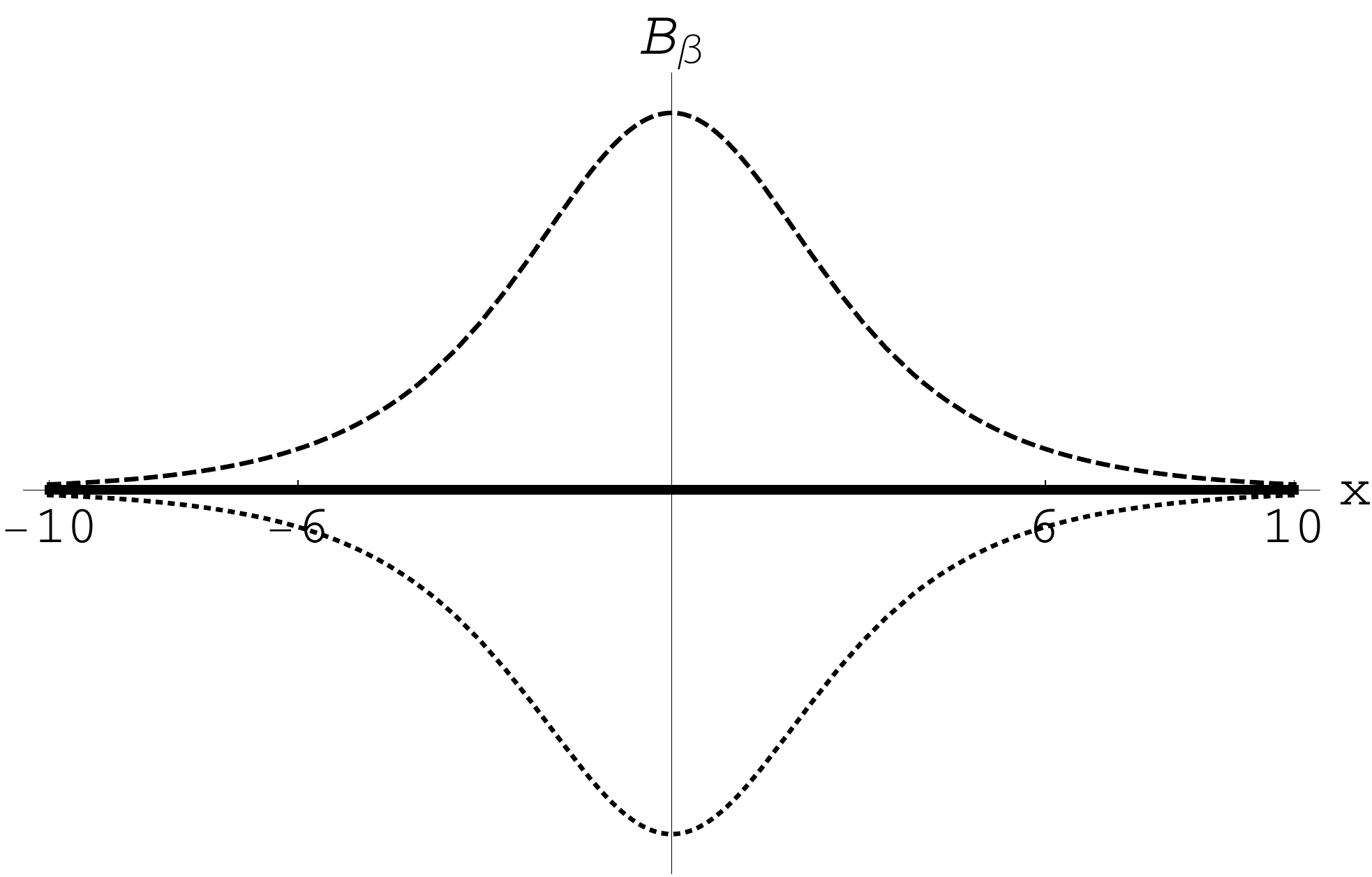}
\end{center}
\caption{Breather solution \eqref{breather} with $\beta=0.5$ at times $t=0$ (continuous curve), $t=2$ (dashed curve) and $t=6$ (dotted curve).}\label{Fig:4}
\end{figure}

\subsection{The $\phi^4$ kink  and the even resonance} A step forward towards the understanding of the long time dynamics around kink solutions in 1+1 dimensions was given in \cite{KMM}, where the authors considered \emph{odd perturbations} of the (odd) kink 
\be\label{H}
H(x)=\tanh \left(\frac{x}{\sqrt{2}} \right),
\ee
in the 1+1 dimensional $\phi^4$-model of Quantum Field Physics \cite{DP,KMM}
\begin{align}\label{phi4}
\phi_{tt}-\phi_{xx}- \phi +\phi^3=0. 
\end{align} 
This model, in its 3D version, is deeply related to the Higgs boson description \cite{Higgs}, via symmetry breaking around the global minima $|\phi|=1$. Although non integrable, $\phi^4$ is closely related to SG \eqref{sg1}. More precisely, after subtraction of $\pi$, SG solutions $\phi$ solve
\be\label{unstable}
\phi_{tt}-\phi_{xx}- \sin\phi =0,
\ee
for which $\phi^4$ \eqref{phi4} is a third order approximation, up to a suitable scaling factor. A beautiful description of the duality $\phi^4$-SG can be found in the monograph by Dauxois and Peyrard \cite{DP}, previously mentioned. In particular, many properties related to SG are also studied in $\phi^4$ and viceversa \cite{Segur1,Segur2,DP}. However, SG is integrable and $\phi^4$ is not.

\medskip

In \cite{KMM} it was proved that (under the oddness assumption on the initial data $(\phi,\phi_t)(t=0)$, which is preserved by the flow),
\be\label{AS_phi4}
\begin{aligned}
&\| (\phi,\phi_t)(t=0) - (H,0)\|_{H^1\times L^2} <\eta \ll 1 \\
& \qquad \qquad  \implies \lim_{t\to \pm\infty} \| (\phi,\phi_t)(t) - (H,0)\|_{(H^1\times L^2)(I)} =0,
\end{aligned}
\ee
for any compact interval of space $I$. This result was showed using fine \emph{virial estimates} allowing to control the existence of an \emph{internal mode} associated to the linear operator $\mathcal L_H$ around $H$:
\be\label{L_H}
\mathcal L_H:= -\partial_x^2 -1 +3H^2 =-\partial_x^2 +2 - 3\sech^2\left( \frac{x}{\sqrt{2}}\right).
\ee
Recall that an internal mode here is a positive eigenvalue below the continuum spectrum. Here the internal mode and its eigenvalue are \cite{KMM}
\be\label{Y_1}
Y_1:= \sech\left( \frac{x}{\sqrt{2}}\right)\tanh\left( \frac{x}{\sqrt{2}}\right),  \quad \la=\frac32.
\ee
The extension of the result \eqref{AS_phi4} to the case of general data is far from being simple, and remains a challenging question, mainly because of the existence of an {\bf spectral resonance} (a generalized eigenfunction of $\mathcal L_H$ in $L^\infty\backslash L^2$) at $\lambda=2$, given by 
\be\label{Reso_phi4}
\mathcal L_H \left(1-\frac32\sech^2\left(\frac{x}{\sqrt{2}}\right) \right) = 2 \left(1-\frac32\sech^2\left(\frac{x}{\sqrt{2}}\right) \right).
\ee
Moreover, this resonance is even, and that is really important for the proof in \cite{KMM}. See that work for more details.

\subsection{Wobbling kinks and the odd resonance} Coming back to SG \eqref{sg1}, and making a quick comparison with $\phi^4$, we can notice that the kink $Q$ (connecting 0 and $2\pi$) \emph{has no parity property}, and the subtraction of $\pi$ above mentioned leads to an equation (see \eqref{unstable}) which is not stable around the zero state. 

\medskip

Recall that we have said that a wobbling kink can be recast as kink + breather, and we know that breathers are even. However, this conception is a somehow misleading because of the following really surprising fact. 

\medskip

Indeed, contrary to $\phi^4$, one can notice from \eqref{wobbling} that wobbling kinks $W_\beta$ can be recast as \emph{(odd, odd)} perturbations of the SG kink $(Q,0)$. Indeed, from  \eqref{wobbling} one has (see also Fig. \ref{Fig:7})
\be\label{wobblingK}
\begin{aligned}
W_\beta(t,x) -Q(x) =&~{} 4\,\hbox{Arg}\Big( \left(  \cosh x \cosh (\beta x) -\beta \sinh x \sinh (\beta x) -\beta \cos(\alpha t) \right) \\
&~{} \qquad \quad+ i \beta \left(\sinh x \cos(\al t) -\sinh(\beta x) \right)\Big)\\
= &~{} 4\arctan \left(\frac{\beta \left(\sinh x \cos(\al t) -\sinh(\beta x) \right)}{\cosh x \cosh (\beta x) -\beta \sinh x \sinh (\beta x) -\beta \cos(\alpha t) }\right).
\end{aligned}
\ee
Even more surprising, is the following fact: if the initial data $\vec \phi (t=0)=(Q,0) +(\widetilde u_0, \widetilde s_0)$ are such that $ (\widetilde u_0, \widetilde s_0)$ are odd, then the equation \eqref{sg1} formally preserves this property: one has $\vec \phi (t)=(Q,0) + (\widetilde u, \widetilde s)(t)$, with $ (\widetilde u, \widetilde s)(t)$ odd for all time\footnote{This is a consequence of the fact that $\sin Q$ is odd and $\cos Q$ is even; the equation is now odd parity invariant: $\partial_t^2 u -\partial_x^2 u +\sin Q(\cos u -1) +\cos Q \sin u=0.$}. 
The wobbling case is a direct example of this property, and it seems the unique parity property around the kink preserved by SG.

\begin{figure}[h!]
   \centering
   \includegraphics[scale=0.35]{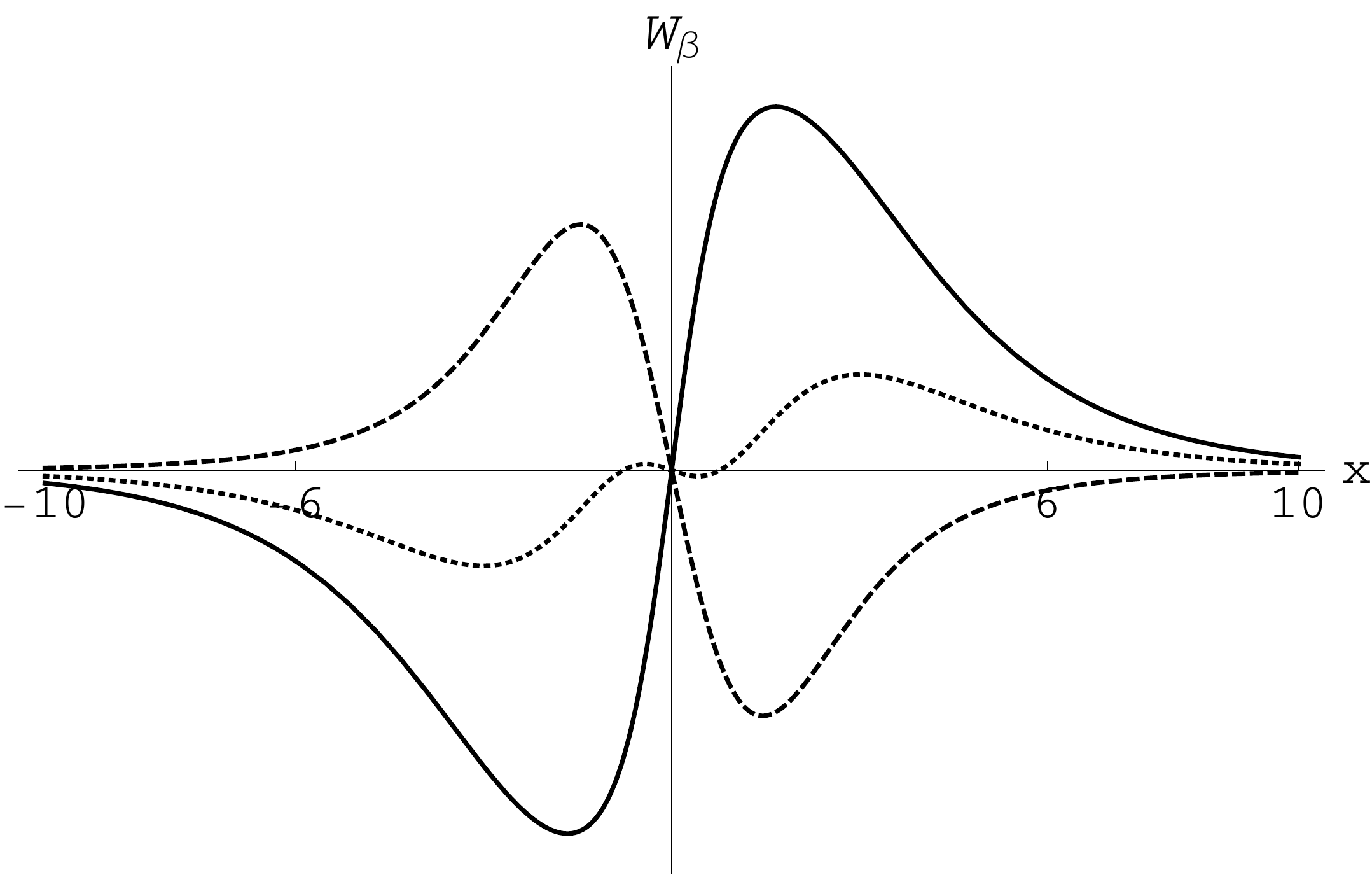}
   \includegraphics[scale=0.35]{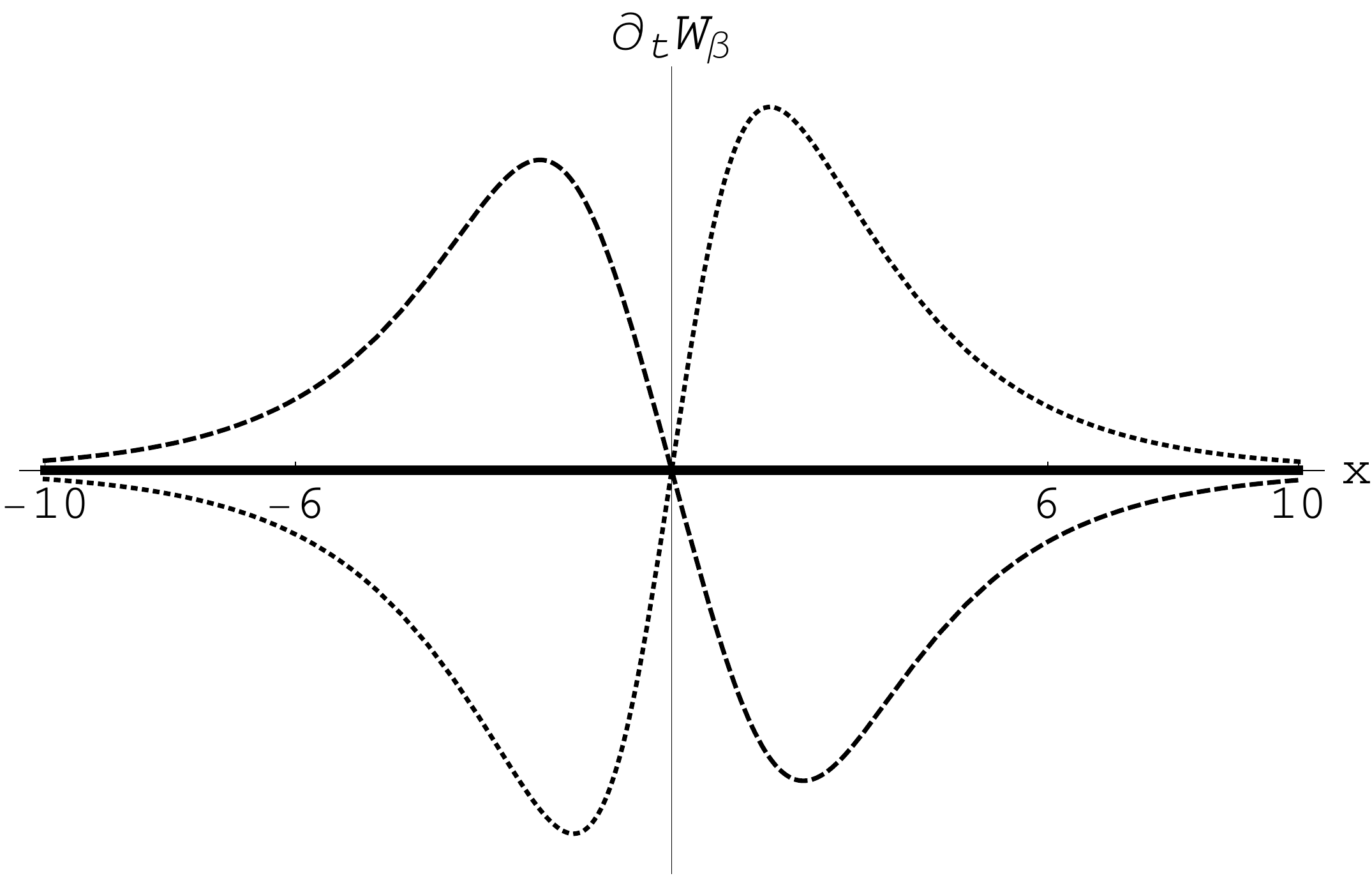}
   \caption{Evolution of the wobbling kink \eqref{wobblingK} minus $Q$ in the case $\beta=0.5$. 
   On the left, $W_\beta(t,x) -Q(x)$ for times $t=0$ (continuous curve), $t=2$ (dashed curve) and $t=6$ (dotted curve). On the right panel, $\partial_tW_\beta(t,x)$ for 
   times $t=0$ (continuous curve), $t=2$ (dashed curve) and $t=6$ (dotted curve). Note the oddness character of both graphs. Contrary to the common belief that wobbling 
   kinks are ``kink+breather'' structures, the even character of the breather \eqref{breather} is not preserved by the wobbling kink.}
   \label{Fig:7}
\end{figure}

\medskip

Consequently, and in view of \eqref{wobblingK}, no result like \eqref{AS_phi4} can be proved in the SG case in the ($Q+$ odd, odd) data case.

\medskip

Another key point to have in mind, related to the odd parity in SG, is that the linear operator around $Q$ given by 
\be\label{L_Q}
\mathcal L_Q:= -\partial_x^2 +\cos Q =-\partial_x^2 +1-2\sech^2 x,
\ee
has no internal modes  (unlike $\phi^4$), but a resonance at $\la=1$ with odd generalized eigenfunction ($=\tanh x$) at the bottom of its continuum spectrum.  This property is in concordance with the existence of an odd perturbation of the kink which does not decay to the kink (the wobbling kink), since the resonance function is also odd. To add more substance to this analogy, in the $\phi^4$ case the resonance above mentioned is associated to an even generalized eigenfunction. Moreover, the resonance $\tanh x$ of period $\la=1$ can be formally found as the spatial part of the $\beta\to 0$ limit of the derivative of $W_\beta$:
\[
L:=\frac14\lim_{\beta\to 0} \partial_\beta W_\beta(t,x) =  \tanh x \cos t.
\]
Note that $L$ does not decay in time, and solves $L_{tt} +\mathcal L_Q (L)=0.$ See also \eqref{L_M} for more properties about $L$. 
{\color{black} A natural question that arises from 
this observation is the following: }
Is there any corresponding 
connection between the $\phi^4$ resonance \eqref{Reso_phi4} and a hypothetical $\phi^4$ \emph{wobbling kink}? In the odd-data case, such a 
connection does not exist in the case of small perturbations \cite{KMM}, but here we talk about even data.

\subsection{Breathers and the AS manifold structure around zero}\label{contra}
A key feature of the breather solution $(B_\beta,\partial_t B_\beta)$ in \eqref{breather} is its parity character in space. Indeed, note that SG preserves (even, even) and (odd, odd) parities around zero, and breathers are even solutions of SG. Also consider the following \emph{parity manifolds}:
\be\label{EO}
\begin{aligned}
\mathcal E_0 := &~{}  H^1_e\times L^2_e, \\
\mathcal O_0 := &~{} H^1_o\times L^2_o .
\end{aligned}
\ee
Both manifolds are preserved by the SG flow. Also, $\mathcal E_0$ is related to the manifold of initial data under which the \emph{zero solution} is not asymptotically stable, since $(B_\beta,\partial_t B_\beta)(t=0)\in \mathcal E_0.$

\medskip

In \cite{KMM2}, it was proved that $\mathcal O_0$ is indeed related to the manifold where asymptotic stability holds:

\begin{thm}[See also Fig. \ref{Fig:1}]\label{thmkmm2}
There exists $\ve_0>0$ such that, if $(y,v)\in C(\R;H^1_o\times L^2_o)$ is a globally defined odd solution to SG such that $\sup_{t\in\R} \|(y,v)(t)\|_{H^1\times L^2} <\ve_0$, then for any compact interval $I \subset\R$ one has
\be\label{AS_zero}
\lim_{t\to \pm\infty} \|(y,v)(t)\|_{(H^1\times L^2)(I)} =0.
\ee
Moreover, there is integration in time of local norms: for any small $c_1>0$ fixed,
\be\label{Integration0}
\int \int e^{-c_1|x|} (y_x^2 +y^2+ v^2)(t,x)dxdt \lesssim \ve_0^2.
\ee
\end{thm}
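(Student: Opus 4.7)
The plan is to adapt the virial/Lyapunov method of Kowalczyk--Martel--Mu\~noz to the SG equation linearized at zero. The central observation is that the linear operator $-\partial_x^2+\cos 0=-\partial_x^2+1$ has the constant function as its only threshold resonance, and odd functions are orthogonal to it. This is precisely the spectral obstruction materialized by the (even) small breathers $B_\beta$, and restricting to $\mathcal{O}_0$ eliminates it.

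First I would recast the dynamics as $y_{tt}-y_{xx}+\sin y = 0$, with $(y,v)=(y,y_t)$ odd. Since $\sin y = y - \tfrac{y^3}{6}+O(y^5)$ is odd, oddness is propagated by the flow, and energy conservation together with the smallness assumption gives global existence and a uniform $H^1\times L^2$ bound. Next, introduce a localized virial functional of the form
\[
\mathcal{I}(t) = \int_{\R} \varphi_A(x)\, v\!\left( y_x + \tfrac{1}{2}\tfrac{\varphi_A'(x)}{\varphi_A(x)}\, y\right) dx,
\]
with $\varphi_A$ an odd weight of the type $\tanh(x/A)$ (or a regularization of $x$ on $[-A,A]$ flattened outside), and $A\gg1$ to be fixed. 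Differentiating in time and using the equation, I expect an identity
\[
\frac{d}{dt}\mathcal{I}(t) = -\int_{\R} \mu_A(x)\,\bigl(v^2 + y_x^2 + y^2\bigr)\,dx + \mathcal{R}_A(y,v)(t),
\]
where $\mu_A\gtrsim e^{-c_1|x|}$ on a window of size $A$ and the remainder $\mathcal R_A$ gathers the cubic-and-higher part of $\sin y - y$ plus boundary contributions.

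The oddness of $(y,v)$ is decisive at two places: it kills the constant-in-$x$ mode (so there is no ``moving-frame'' obstruction), and it allows a Hardy--Poincar\'e inequality of the form $\int \mu_A\, y^2\lesssim A^2 \int \mu_A\, y_x^2$ for odd functions, which is what produces genuine coercivity in $y^2$ (not only in derivatives) from a first-order virial. Choosing $A$ large and $\ve_0$ small, the nonlinear remainder is absorbed: $|\mathcal R_A|\lesssim \ve_0\int \mu_A(v^2+y_x^2+y^2)$. Since $|\mathcal I(t)|\lesssim \|(y,v)(t)\|_{H^1\times L^2}^2\lesssim \ve_0^2$ uniformly in $t$, integrating the virial identity on $\R$ yields exactly \eqref{Integration0}.

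Finally, \eqref{AS_zero} follows from \eqref{Integration0} in a standard way: on any compact $I$, the integral $\int \|(y,v)(t)\|_{(H^1\times L^2)(I)}^2\,dt<\infty$ together with the uniform $H^1\times L^2$ bound and the equation (which controls $\partial_t$ of the local norm) forces the local norm to tend to $0$ as $|t|\to\infty$. The hard part of the argument is the coercivity step: one must design $\varphi_A$ and the lower-order correction $\tfrac12(\varphi_A'/\varphi_A)y$ so that, after integration by parts, the quadratic form $\int\mu_A(v^2+y_x^2+y^2)$ truly dominates all indefinite contributions coming from the mass term $\sin y\sim y$; the odd Hardy--Poincar\'e inequality is exactly what closes this coercivity and is the place where the hypothesis $(y,v)\in \mathcal O_0$ is used in a non-negotiable way.
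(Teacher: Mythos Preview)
The paper does not actually prove this theorem: it is quoted verbatim from \cite{KMM2} (see the sentence ``In \cite{KMM2}, it was proved that $\mathcal O_0$ is indeed related to the manifold where asymptotic stability holds'' preceding the statement), and is used only as a black box in Sections~\ref{9}--\ref{11}. So there is no ``paper's own proof'' to compare against here.

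That said, your sketch is essentially the argument of \cite{KMM2}: a localized virial identity with an odd weight, nonlinear remainders absorbed by smallness, and the crucial use of oddness to rule out the constant threshold resonance of $-\partial_x^2+1$ and to obtain a weighted Poincar\'e-type control on $y^2$. Two technical points deserve care if you flesh this out. First, the virial computation does not directly produce a quadratic form with a strictly positive weight on $y^2$; one instead gets coercivity on $y_x^2+v^2$ with the right weight, and the $y^2$ contribution is recovered \emph{a posteriori} from $y_x^2$ via the odd Hardy--Poincar\'e inequality, so your displayed identity with $\mu_A(v^2+y_x^2+y^2)$ is schematic rather than literal. Second, the passage from \eqref{Integration0} to \eqref{AS_zero} is not merely ``$L^1$ in time plus $C^1$ in time implies convergence to zero'': the time derivative of the local $(H^1\times L^2)(I)$ norm is not obviously bounded by a localized quantity, so in \cite{KMM2} one argues instead by selecting a sequence $t_n\to\infty$ along which the local norm tends to zero (from integrability), and then uses a second, energy-type localized functional to propagate this to all large times. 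Your last paragraph glosses over this step.
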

\begin{rem}
Estimate \eqref{Integration0} will be useful to prove Theorem \ref{MT00}, more precisely, the convergence result in Theorem \ref{MT0}, eqn. \ref{final_data0_1} (see Section \ref{11}, Step 4).
\end{rem}

\begin{figure}[h!]
\begin{center}
\begin{tikzpicture}[scale=1.1]
\filldraw[thick, color=black!50] (-2,0.05)--(2,0.05) -- (2,-0.05) --(-2,-0.05) -- (-2,0.05);
\filldraw[thick, color=lightgray!90] (0.05,2)--(-0.05,2) -- (-0.05,-2) --(0.05,-2) -- (0.05,2);
\draw[thick,dashed] (-2,0) -- (2,0);
\draw[->] (-2,0) -- (3.5,0) node[below] {\small $(e,e)\in\mathcal E_0$};
\draw[->] (0,-2) -- (0,3) node[right] {\small $(o,o) \in\mathcal O_0$};
\node at (0,0){$\bullet$};
\node at (4/3,0){$\bullet$};
\node at (4/3,-0.4){\small $(B_\beta,B_{\beta,t})$};
\node at (0.83,2.3){\small $\longrightarrow P=0$};
\node at (0,0){$\bullet$};
\node at (0,1){$\bullet$};
\node at (-0.8,1){$(y_0,v_0)$};
\node at (-0.6,-0.4){$(0,0)$};
\end{tikzpicture}
\end{center}
\caption{A schematic representation of the initial-data manifolds $\mathcal E_0$ and $\mathcal O_0$ in \eqref{EO}. Here $(o,o)$ and $(e,e)$ mean odd-odd and even-even data in $H^1\times L^2$. The horizontal dark region represents a submanifold of small initial data for which no asymptotic stability (AS) around zero is present. The breather family $(B_\beta,B_{\beta,t})$ (for any small $\beta$) is part of this manifold (but it is not known yet if it is the \emph{only} counterexample to AS). On the other hand, the vertical submanifold is related to AS thanks to Theorem \ref{thmkmm2}, it has zero momentum $P$ (see \eqref{Momentum}) and it is part of the region in the energy space where AS is present. Finally, note that both manifolds $\mathcal E_0$ and $\mathcal O_0$ are preserved by the SG flow, intersect themselves only at the origin, and they are $H^1\times L^2$ orthogonal.}\label{Fig:1}
\end{figure}
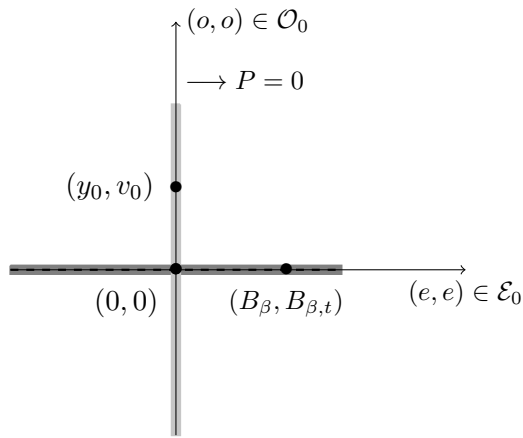

\medskip

\section{B\"acklund transformations}\label{3}

The previous discussion, mesmerizing in terms of allowing us to extend the techniques used in \cite{KMM} to the SG case, opens a new window of possibilities where asymptotic stability could hold, this time in the complement of the \emph{odd parity manifold} (note how different are parities between $\phi^4$ and SG). But first we need to introduce the SG B\"acklund Transformations (BT). For more details, see e.g. \cite{MP}.

\subsection{Definitions} Let us write \eqref{sg1} in matrix form, that is $\vec\phi =(\phi,\phi_t)=(\phi_1,\phi_2)$, in such a form that \eqref{sg1} reads now
\be\label{SG}
\begin{cases}
\partial_t \phi_1 =\phi_2,\\
\partial_t \phi_2 =\partial_x^2 \phi_1 -\sin\phi_1.
\end{cases}
\ee
Now we introduce the B\"acklund transformation (BT) that we will use in this article. Recall that $\dot H^1$ represents the closure of $C_0^\infty$ under the norm $\|\partial_x \cdot  \|_{L^2}$.
\begin{defn}[B\"acklund Transformation]\label{defi1}
Let $a\in \Com$ be fixed. Let {\color{black}$\vec\phi=(\phi_0,\phi_1)(x)$} be a function defined in $\dot H^1(\Com)\times L^2(\Com)$. We will say that $\vec\varphi$ in $\dot H^1(\Com)\times L^2(\Com)$ is a {\bf B\"acklund transformation} (BT) of $\vec \phi$ by the parameter $a$, denoted
\be\label{Flecha}
 \, \mathbb{B}(\vec\phi) \xrightarrow{\ a \ }{\vec\varphi},
\ee
if the triple $(\vec\phi,\vec\varphi,a)$ satisfies the following equations, for all $x\in \R$:
\begin{align} 
    \varphi_{0,x}-\phi_1 \ & = \ \dfrac{1}{a}\sin\left(\dfrac{\varphi_0+\phi_0}{2}\right)+a\sin\left(\dfrac{\varphi_0-\phi_0}{2}\right), \label{b1}
    \\ 
    \varphi_1-\phi_{0,x} \ & = \ \dfrac{1}{a}\sin\left(\dfrac{\varphi_0+\phi_0}{2}\right)-a\sin\left(\dfrac{\varphi_0-\phi_0}{2}\right).\label{b2}
    \end{align}
\end{defn}

The following result is standard in the literature, justifying the introduction of the BT \eqref{b1}-\eqref{b2}.

\begin{lem}[\cite{Lamb,MP}]\label{Solutions_Backlund}
If $(\vec\phi,\vec\varphi)$ are {\color{black} $(C^2\times C^1)(\R_t \times \R_x)$} functions related via a BT \eqref{b1}-\eqref{b2}, then both solve \eqref{SG}. 
\end{lem}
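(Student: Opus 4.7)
The plan is to treat \eqref{b1}--\eqref{b2} as a system of first-order compatibility relations between $\phi_0$ and $\varphi_0$ (using $\phi_1=\phi_{0,t}$ and $\varphi_1=\varphi_{0,t}$), and then invoke equality of mixed partials $\partial_t\partial_x = \partial_x\partial_t$ on each side to produce the wave equation. Write $A:=\tfrac{1}{a}\sin u + a\sin v$ and $B:=\tfrac{1}{a}\sin u - a\sin v$, where $u:=\tfrac12(\varphi_0+\phi_0)$ and $v:=\tfrac12(\varphi_0-\phi_0)$, so that the BT reads simply
\[
\varphi_{0,x}=\phi_{0,t}+A, \qquad \varphi_{0,t}=\phi_{0,x}+B.
\]

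The first step is to express the telescoping combinations $\varphi_{0,t}\pm\varphi_{0,x}$ in terms of $\phi_0$ quantities. Subtracting and adding the two relations gives
\[
(\varphi_{0,t}-\varphi_{0,x})+(\phi_{0,t}-\phi_{0,x}) = B-A = -2a\sin v, \qquad (\varphi_{0,t}+\varphi_{0,x})-(\phi_{0,t}+\phi_{0,x}) = A+B = \tfrac{2}{a}\sin u.
\]
Then I would differentiate: from $\varphi_{0,xt}=\varphi_{0,tx}$ one obtains $\phi_{0,tt}-\phi_{0,xx}=B_x-A_t$, while from $\phi_{0,xt}=\phi_{0,tx}$ one gets $\varphi_{0,tt}-\varphi_{0,xx}=B_t-A_x$. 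A direct computation of $A_t,B_x,B_t,A_x$ using $u_{t,x}=\tfrac12(\varphi_{0,\cdot}+\phi_{0,\cdot})$ and $v_{t,x}=\tfrac12(\varphi_{0,\cdot}-\phi_{0,\cdot})$ yields, after substituting the two identities above,
\[
A_t-B_x = -\cos u\sin v + \sin u\cos v = \sin(u-v) = \sin\phi_0,
\]
\[
B_t-A_x = -\cos u\sin v - \sin u\cos v = -\sin(u+v) = -\sin\varphi_0.
\]

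Combining these with the compatibility identities produces $\phi_{0,tt}-\phi_{0,xx}+\sin\phi_0=0$ and $\varphi_{0,tt}-\varphi_{0,xx}+\sin\varphi_0=0$, which are exactly \eqref{sg1}; the vector form \eqref{SG} then follows from $\phi_1=\phi_{0,t}$ and $\varphi_1=\varphi_{0,t}$. The $C^2\times C^1$ regularity hypothesis is used precisely to justify equality of mixed partials and to make sense of the second derivatives appearing in SG.

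There is no real obstacle here: the proof is entirely algebraic once the sum-to-product identification $u\pm v = \varphi_0,\phi_0$ is exploited, and the only care needed is bookkeeping of signs between $A$ and $B$, plus keeping track of which of $\phi_0$ or $\varphi_0$ is being solved for when one inverts \eqref{b1}--\eqref{b2}. The symmetric disappearance of the $\phi_0$-derivatives in the computation of $B_t-A_x$ (resp.\ $\varphi_0$-derivatives in $A_t-B_x$) is the mechanism that makes both fields satisfy the same nonlinear equation, and reflects the well-known involutivity of B\"acklund transformations for sine-Gordon.
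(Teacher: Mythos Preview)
Your argument is correct and is precisely the classical cross-differentiation proof that the paper is citing (the paper does not give its own proof here, merely referring to \cite{Lamb,MP} as standard). The identification $\phi_1=\phi_{0,t}$, $\varphi_1=\varphi_{0,t}$ you make at the outset is the intended reading of the lemma, and with it your computation of $A_t-B_x=\sin\phi_0$ and $B_t-A_x=-\sin\varphi_0$ via the telescoped identities $u_t-u_x=-a\sin v$, $v_t+v_x=\tfrac{1}{a}\sin u$ is exactly the textbook derivation.
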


By using a density argument, the previous property holds for energy solutions of SG \cite{MP}, and \eqref{b1} and \eqref{b2} are satisfied in the $L^2$ sense. 

\begin{rem}
The use of the B\"acklund transformation is not new in the field of integrable stability theory. The reader can consult the monograph \cite{MS} for a detailed introduction to the subject. In recent years, several works dealing with stability of solitonic structures via B\"acklund transformations have appeared: Hoffman and Wayne \cite{HW}, Mizumachi and Pelinovsky \cite{MizPel}, \cite{AM1,AMP2} and \cite{MP}, 
but its use as a method for proving asymptotic stability results in this paper seems to be new in nonlinear wave like equations. 
\end{rem}

We finish this subsection by considering B\"acklund functionals, in the sense considered in \cite{AM1,MP} .
 
\begin{defn}[B\"acklund functionals]\label{fperturbacion}
Let $(\varphi_0,\,\varphi_1,\,\phi_0,\,\phi_1,\,a)$ be data in a space $X(\R)$ to be chosen later. Let us define the functional with vector values $\mathcal F:=(\mathcal F_1,\mathcal F_2)$, where $\mathcal F=\mathcal F(\varphi_0,\,\varphi_1,\,\phi_0,\,\phi_1,\,a) \in L^2(\R) \times L^2(\R)$, given by the system: 
\begin{align}
    \mathcal F_1 \big(\varphi_0,\,\varphi_1,\,\phi_0,\,\phi_1,\,a\big) &:=  \varphi_{0,x}-\phi_1 - \dfrac{1}{a}\sin\left(\dfrac{\varphi_0+\phi_0}{2}\right)-a\sin\left(\dfrac{\varphi_0-\phi_0}{2}\right), \label{f1}
    \\ \mathcal F_2 \big(\varphi_0,\,\varphi_1,\,\phi_0,\,\phi_1,\,a\big) &:=
    \varphi_1-\phi_{0,x}- \dfrac{1}{a}\sin\left(\dfrac{\varphi_0+\phi_0}{2}\right)+a\sin\left(\dfrac{\varphi_0-\phi_0}{2}\right).\label{f2}
\end{align}  
\end{defn}
The choice of the space $X(\R)$ heavily depends on the considered background solution. In our case, since $Q$ does not belong to $L^2$, we will consider a different space for $\mathcal F$.

%
%
%
%


\subsection{Kink profiles} Here we introduce the notion of kink profiles. See \cite{AMP1,MP} for more details.

\begin{defn}[Kink profiles]\label{Kink0}
Let $\beta\in (-1,1)$, $\beta\neq0$, and $x_0\in \R$ be fixed parameters. We define the real-valued kink profile $\vec Q:=(Q,Q_t)$ with speed $\beta$ as
\begin{align}\label{Q} 
Q(x):= Q(x; \beta, x_0)=4\arctan\big(e^{\ga(x - x_0)}\big), \qquad \ga:= (1-\beta^2)^{-1/2},
\end{align}
\be\label{Qx}
Q_x(x):= Q_x(x; \beta,x_0)=    \frac{4 \ga e^{\ga(x - x_0)}}{1+ e^{2\ga(x - x_0)}} = \frac{2\ga }{\cosh(\ga(x-x_0))},
\ee
and
\be\label{Qt}
Q_t(x):= Q_t(x; \beta,x_0)=    \frac{-4\beta \ga e^{\ga(x - x_0)}}{1+ e^{2\ga(x - x_0)}} = \frac{-2\beta\ga }{\cosh(\ga(x-x_0))}.
\ee
\end{defn}

\begin{rem}[See also \cite{MP}]\label{Solucion_Exacta}
The profile $(Q,Q_t)$ is the standard profile associated to the kink solution \eqref{Kink}. Although $(Q,Q_t)$ is not an exact solution of \eqref{SG}, 
it can be understood as follows: for each $(t,x)\in \R^2$, $(t,x) \mapsto (Q,Q_t)(x;\beta,x_0-\beta t)$ is an exact solution of \eqref{SG}, moving with speed $\beta$. 
\end{rem}

In what follows, we prove connections between kink profiles and the zero solution in SG. 
Although some of these results are standard, recall that 
we prove them not only for exact solutions, but also for profiles which are not exact solutions of SG.

\begin{lem}[Kink as BT of zero, \cite{MP}]\label{prkk1} Let $(Q, Q_t)$ be a  SG kink profile with scaling parameter $\beta \in (-1,1)$, $\beta\neq 0$, and shift $x_0$, see Definition \ref{Kink0}. Then, for each $x\in \R$, $(Q, Q_t)$ is a BT of the origin $(0,0)$ with parameter
\be\label{a(beta)}
a= a(\beta):= \left(\frac{1+\beta}{1-\beta}\right)^{1/2}.
\ee
That is,  
\begin{align}\label{eqn:BT_Q}
    Q_x  = \ \dfrac{1}{a}\sin\Big(\dfrac{ Q}{2}\Big)+a\,\sin\Big(\dfrac{ Q}{2}\Big), \qquad &   Q_t  = \ \dfrac{1}{a}\sin\Big(\dfrac{Q}{2}\Big)-a\,\sin\Big(\dfrac{Q}{2}\Big).
\end{align}
\end{lem}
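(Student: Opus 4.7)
The plan is to verify the two BT equations \eqref{b1}--\eqref{b2} directly, using the explicit formulas in Definition \ref{Kink0} and the value of $a(\beta)$ given in \eqref{a(beta)}. With $\vec\phi=(0,0)$ and $\vec\varphi=(Q,Q_t)$, both equations collapse (since $\sin(0\pm Q)/2=\pm\sin(Q/2)$) to the two pointwise identities
\[
Q_x \;=\; \Bigl(\tfrac{1}{a}+a\Bigr)\sin\!\Bigl(\tfrac{Q}{2}\Bigr),\qquad Q_t \;=\; \Bigl(\tfrac{1}{a}-a\Bigr)\sin\!\Bigl(\tfrac{Q}{2}\Bigr).
\]
So the lemma reduces to checking these two scalar identities for every $x\in\R$, which is a routine computation.

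The first step is to compute $\sin(Q/2)$ in closed form. Setting $u:=e^{\gamma(x-x_0)}$, the definition \eqref{Q} gives $Q/2=2\arctan u$, and hence by the standard identity $\sin(2\arctan u)=2u/(1+u^2)$ one obtains
\[
\sin\!\Bigl(\tfrac{Q}{2}\Bigr) \;=\; \frac{2e^{\gamma(x-x_0)}}{1+e^{2\gamma(x-x_0)}}\;=\;\frac{1}{\cosh(\gamma(x-x_0))}.
\]
Comparing this with the explicit expressions \eqref{Qx} and \eqref{Qt}, the sought identities are equivalent to the two purely algebraic relations
\[
\frac{1}{a}+a \;=\; 2\gamma,\qquad \frac{1}{a}-a \;=\; -2\beta\gamma.
\]

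The second step is to verify these two algebraic relations from the definition $a^2=(1+\beta)/(1-\beta)$. Writing $\frac{1}{a}\pm a=(1\pm a^2)/a$, substitution gives $1+a^2=2/(1-\beta)$ and $1-a^2=-2\beta/(1-\beta)$, while $a=\sqrt{(1+\beta)/(1-\beta)}$ yields $1/a=\sqrt{(1-\beta)^2\cdot(1-\beta)^{-1}(1+\beta)^{-1}}\,/(1-\beta)^{-1}$, so that dividing produces the common factor $1/\sqrt{(1-\beta)(1+\beta)}=\gamma$. This gives $\frac{1}{a}+a=2\gamma$ and $\frac{1}{a}-a=-2\beta\gamma$, as required.

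There is essentially no obstacle: the lemma is a direct pointwise algebraic identity, and I do not foresee any analytic difficulty (the profile is smooth, so the equations hold in the classical sense and a fortiori as $L^2$/energy-space identities). The only thing to keep in mind is the sign convention in \eqref{Qt}, which is what forces the choice of the positive branch $a(\beta)=\sqrt{(1+\beta)/(1-\beta)}$ (rather than its negative); picking the opposite sign would produce a kink of speed $-\beta$. With the convention fixed, the two identities $Q_x=(1/a+a)\sin(Q/2)$ and $Q_t=(1/a-a)\sin(Q/2)$ hold for all $x\in\R$, which is exactly \eqref{eqn:BT_Q}, completing the proof.
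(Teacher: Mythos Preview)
Your proof is correct. The paper does not include its own proof of this lemma (it is cited from \cite{MP}), and your direct verification---computing $\sin(Q/2)=\sech(\gamma(x-x_0))$ and reducing everything to the algebraic identities $a+a^{-1}=2\gamma$, $a^{-1}-a=-2\beta\gamma$---is exactly the natural argument and almost certainly what \cite{MP} does as well.
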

Note that \eqref{eqn:BT_Q} can be read as $\mathcal F(Q, Q_t,0,0,a)=0$, in terms of \eqref{f1}-\eqref{f2}.

\begin{rem}
In \eqref{Flecha}, the parameter $a\in\R$ links $\vec \phi$ to $\vec\varphi$. In this paper, thanks to Lemma \ref{prkk1} we will connect 
the zero state with the kink state using this transformation, and then we will perturb both states (Theorems \ref{MT00} and \ref{MT0}). 
However, since we know that asymptotic stability does not hold in the (odd, odd) regime (see Subsection \ref{1p1}), what we need is 
to ensure that our initial perturbation of the kink $\vec\varphi$, which will be of type (odd, even), could lead to a perturbation of the 
zero state $\vec\phi$ of type (odd, odd). And we need (odd, odd) data because decay of small data for SG occurs in this particular setting \cite{KMM2}. 
For instance, if the data is even, the breather \eqref{breather} is a counterexample to decay, see Subsection \ref{contra} for more details.
\end{rem}

Before proving its existence, we need some results about the parity properties satisfied by the B\"acklund functionals.

\subsection{Parity properties associated to B\"acklund functionals} Recall the kink profile $Q$ introduced in \eqref{Q}. Because of parity reasons, we will need a slight modification of $Q$, 
denoted $\widetilde Q$, and simply defined as 
\be\label{tildeQ}
\widetilde Q(x;\beta,x_0):= Q(x;\beta,x_0) -\pi ,\qquad  \widetilde Q_t(x;\beta,x_0):= Q_t(x;\beta,x_0).
\ee
Note that $\widetilde Q$ is now odd in $x+x_0$, while $\widetilde Q_t$ is even, except when $\beta=0$. In such a case, it is also odd. 

\begin{lem}[Parity properties for $\mathcal F$]\label{Parity}
Let $(Q, Q_t)=(Q,Q_t)(x;\beta,x_0)$ be a kink profile of parameters $\beta\in (-1,1)$ and $x_0\in\R$. Consider 
the associated modified kink profile $(\widetilde Q,\widetilde Q_t)=(\widetilde Q,\widetilde Q_t)(x;\beta,x_0)$ introduced in \eqref{tildeQ}. Let also $(\widetilde u_{0},\widetilde s_0)\in H^1\times L^2$ 
and $(y_0,v_0) \in H^1\times L^2$ be given functions. Finally, consider $a=a(\beta)$ as defined in \eqref{a(beta)}, and $\delta\in\R$ sufficiently small. Then the following are satisfied:
\smallskip
\ben
\item[(a)] One has from \eqref{f1} and \eqref{f2}
\begin{align}
    & \qquad \mathcal F_1 \big(Q+\widetilde u_0, Q_t +\widetilde s_0 , y_0,v_0, a + \delta \big)  = \widetilde{\mathcal F}_1(\widetilde u_0,\widetilde s_0,y_0,v_0,\delta) \nonumber \\
     & \qquad\quad  :=  \widetilde Q_x + \widetilde u_{0,x} -v_0 - \dfrac{1}{a+\delta}\cos\left(\dfrac{\widetilde Q +\widetilde u_0 + y_0}{2}\right)-(a+\delta)\cos\left(\dfrac{\widetilde Q +\widetilde u_0-y_0}{2}\right), \label{f1_new}
    \\ 
    &\qquad \mathcal F_2 \big(Q+\widetilde u_0, Q_t +\widetilde s_0 , y_0,v_0, a + \delta \big)  = \widetilde{\mathcal F}_2(\widetilde u_0,\widetilde s_0,y_0,v_0,\delta) \nonumber \\
   &\qquad\quad  :=  \widetilde Q_t + \widetilde s_{0} -y_{0,x} - \dfrac{1}{a+\delta}\cos\left(\dfrac{\widetilde Q +\widetilde u_0 + y_0}{2}\right) + (a+\delta)\cos\left(\dfrac{\widetilde Q +\widetilde u_0-y_0}{2}\right).\label{f2_new}
\end{align}  
\item[(b)] If now $\widetilde u_{0},y_0\in H^1_o$ (see definitions in \eqref{Espacios_paridad}), and $x_0=0$, then 
\[
\cos\left(\dfrac{\widetilde Q +\widetilde u_0 \pm y_0}{2}\right)
\] 
are even functions, with $\lim_{x\to\pm\infty} \cos\left(\frac{\widetilde Q +\widetilde u_0 \pm y_0}{2}\right) = \lim_{x\to\pm\infty}\cos\left(\frac{\widetilde Q}{2}\right) =0$. Moreover, both functions belong to $H^1_e$.
\smallskip
\item[(c)] If now $x_0=0$, $(\widetilde u_{0},\widetilde s_0)\in H^1_o\times L^2_e$ and $(y_0,v_0) \in H^1_o\times L^2_e$, then 
\[
 \widetilde{\mathcal F}_1(\widetilde u_0,\widetilde s_0,y_0,v_0,\delta) \in L^2_e,\quad  \widetilde{\mathcal F}_2(\widetilde u_0,\widetilde s_0,y_0,v_0,\delta) \in L^2_e. 
\]
Consequently, $ \widetilde{\mathcal F}:= (\widetilde{\mathcal F}_1, \widetilde{\mathcal F}_2)$ is a well-defined functional from $X(\R):= H^1_o\times L^2_e \times H^1_o\times L^2_e\times \R$ into $L^2_e\times L^2_e$, provided $\delta$ is chosen such that $a+\delta \neq 0$. It is also a $C^1$ functional among the considered spaces. 
\smallskip
\item[(d)] Assume $x_0=0$, $\beta=0$, $a=1$ and $\delta=0$. Then, if $(\widetilde u,\widetilde s)\in H^1_o\times L^2_o$ and $(y,v) \in H^1_e\times L^2_e$, then 
\[
 \widetilde{\mathcal F}_1(\widetilde u,\widetilde s,y,v, 0) \in L^2_e,\quad  \widetilde{\mathcal F}_2(\widetilde u,\widetilde s,y,v,0) \in L^2_o. 
\]
Consequently, abusing of notation, $ \widetilde{\mathcal F}:= (\widetilde{\mathcal F}_1, \widetilde{\mathcal F}_2)$ is a well-defined functional from $X_0(\R):= H^1_o\times L^2_o \times H^1_e\times L^2_e$ into $L^2_e\times L^2_o$. It is also a $C^1$ functional among the considered spaces. 
\een
\end{lem}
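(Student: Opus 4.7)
The overall strategy is direct computation, unified by one recurrent algebraic manipulation: writing $Q = \widetilde Q + \pi$ and applying $\sin(\pi/2 + \theta) = \cos\theta$. The regularity/decay analysis rests on the explicit profile identities $\cos(\widetilde Q(x;\beta,0)/2) = \sech(\gamma x)$ and $\sin(\widetilde Q(x;\beta,0)/2) = \tanh(\gamma x)$, which follow easily from $Q/2 = 2\arctan(e^{\gamma x})$ via the standard double-angle formulas for $\sin(2\theta)$ and $\cos(2\theta)$ in terms of $\tan\theta$.

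For part (a), I plug $Q + \widetilde u_0$, $Q_t + \widetilde s_0$, $y_0$, $v_0$, $a+\delta$ into \eqref{f1}--\eqref{f2}, use $Q = \widetilde Q + \pi$, $Q_x = \widetilde Q_x$, $Q_t = \widetilde Q_t$, and $\sin(\pi/2+\theta) = \cos\theta$ to convert each sine into a cosine; this yields \eqref{f1_new}--\eqref{f2_new} at once. For part (b), when $x_0=0$, $\widetilde Q$ is odd, so $(\widetilde Q + \widetilde u_0 \pm y_0)/2$ is odd (sum of three odd functions) and its cosine is even. The limits at $\pm\infty$ follow from $\widetilde Q(\pm\infty)=\pm\pi$ together with $\widetilde u_0, y_0\to 0$ via $H^1\hookrightarrow C_0$. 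To obtain $H^1_e$-membership I apply angle addition with $w:=\widetilde u_0\pm y_0\in H^1_o$:
\[
\cos\!\left(\frac{\widetilde Q + w}{2}\right) = \sech(\gamma x)\cos\!\left(\frac{w}{2}\right) - \tanh(\gamma x)\sin\!\left(\frac{w}{2}\right),
\]
the first summand being in $H^1$ since $\sech(\gamma\cdot)\in H^1$ and $w\mapsto\cos(w/2)$ is bounded and Lipschitz, the second being in $L^2$ since $|\sin(w/2)|\le|w|/2$ pointwise and $w\in L^2$; differentiation produces analogous terms involving $w_x$, all of which remain in $L^2$.

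Part (c) is then a direct parity audit of \eqref{f1_new}--\eqref{f2_new}: $\widetilde Q_x$, $\widetilde u_{0,x}$, $v_0$ are all even; $\widetilde Q_t(x;\beta,0)=-2\beta\gamma\sech(\gamma x)$ is even; $\widetilde s_0$ is even; $y_{0,x}$ is even; and the two cosines are even by part (b). Hence both $\widetilde{\mathcal F}_j \in L^2_e$ provided $a+\delta\neq 0$. The $C^1$-regularity between the stated Banach spaces follows from the smoothness and bounded derivatives of $\sin, \cos$, together with the embedding $H^1\hookrightarrow L^\infty$ and the smooth dependence of $1/(a+\delta)$ on $\delta$.

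For part (d), with $\beta=0$, $a=1$, $\delta=0$, the decisive trick is sum-to-product: setting $A:=(\widetilde Q+\widetilde u+y)/2$ and $B:=(\widetilde Q+\widetilde u-y)/2$, one obtains
\begin{align*}
\widetilde{\mathcal F}_1 &= \widetilde Q_x + \widetilde u_x - v - 2\cos\!\left(\tfrac{\widetilde Q + \widetilde u}{2}\right)\cos\!\left(\tfrac{y}{2}\right),\\
\widetilde{\mathcal F}_2 &= \widetilde s - y_x + 2\sin\!\left(\tfrac{\widetilde Q + \widetilde u}{2}\right)\sin\!\left(\tfrac{y}{2}\right),
\end{align*}
since $\widetilde Q_t\equiv 0$ when $\beta=0$. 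Now $\widetilde Q + \widetilde u$ is odd, so $\cos((\widetilde Q+\widetilde u)/2)$ is even and $\sin((\widetilde Q+\widetilde u)/2)$ is odd; $y$ is even, so both $\cos(y/2)$ and $\sin(y/2)$ are even. The product in $\widetilde{\mathcal F}_1$ is therefore even and the one in $\widetilde{\mathcal F}_2$ odd; combined with $\widetilde Q_x, \widetilde u_x, v \in L^2_e$ and $\widetilde s, y_x \in L^2_o$, the stated parities follow. I expect the main obstacle to be the routine but fiddly $H^1$-bookkeeping in part (b); everything else is essentially parity arithmetic once the decomposition identities above are in hand.
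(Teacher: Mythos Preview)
Your proof is correct and follows essentially the same approach as the paper's own proof: the $\sin(\pi/2+\theta)=\cos\theta$ rewrite for (a), the key identity $\cos(\widetilde Q/2)=\sech(\gamma x)$ combined with angle addition for (b), the parity audit for (c), and the sum-to-product identities for (d). Your version is in fact somewhat more detailed than the paper's (which is quite terse), particularly in spelling out the $H^1$ membership in (b) and the $L^2$ bookkeeping in (d).
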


\begin{proof}
Equations \eqref{f1_new}-\eqref{f2_new} are just a rewrite of \eqref{f1}-\eqref{f2}. The equality of the limit in statement $(b)$ is a consequence of the Sobolev embedding for $H^1(\mathbb{R})$ and formula \eqref{coseno} below. On the other hand, the fact that $\cos(\tfrac{
\widetilde{Q}+\widetilde{u}_0\pm y_0}{2})$ belongs to $H^1_e$ follows from basic trigonometric identities, the hypothesis $\widetilde u_0,y_0\in H^1_o$ and the following identity: \begin{align}\label{coseno}
\cos\left(\dfrac{\widetilde Q}{2}\right)=\sech(\gamma (x+x_0)).
\end{align}
Statement $(c)$ is a direct consequence of the definitions \eqref{f1_new}-\eqref{f2_new} and part $(b)$ of this Lemma (for the parity property).

\medskip

Finally, $(d)$ is consequence of $\widetilde Q_t=0$ under $\beta=0$ ($a=1$ in \eqref{a(beta)}), $\widetilde Q_x$ even if $x_0=0$, and the formulae
\[
\begin{aligned}
     &~{} \widetilde{\mathcal F}_1(\widetilde u,\widetilde s,y,v,0) =  \widetilde Q_x + \widetilde u_{x} -v - \cos\left(\dfrac{\widetilde Q +\widetilde u + y}{2}\right)-\cos\left(\dfrac{\widetilde Q +\widetilde u-y}{2}\right) \in L^2_e, 
    \\ 
    &~{} \widetilde{\mathcal F}_2(\widetilde u,\widetilde s,y,v,0) = \widetilde s -y_{x} - \cos\left(\dfrac{\widetilde Q +\widetilde u + y}{2}\right) +\cos\left(\dfrac{\widetilde Q +\widetilde u-y}{2}\right)  \in L^2_o,
\end{aligned}
\] 
valid for $(\widetilde u,\widetilde s,y,v) \in H^1_o\times L^2_o \times H^1_e\times L^2_e.$ Indeed, note that $\widetilde Q$ is odd, and
\[
\cos\left(\dfrac{\widetilde Q +\widetilde u + y}{2}\right) + \cos\left(\dfrac{\widetilde Q +\widetilde u-y}{2}\right) =2\cos\left(\dfrac{\widetilde Q +\widetilde u}{2}\right)\cos\left(\dfrac{y}{2}\right) \in L_e^2,
\]
and
\[
\cos\left(\dfrac{\widetilde Q +\widetilde u + y}{2}\right) - \cos\left(\dfrac{\widetilde Q +\widetilde u-y}{2}\right) =-2\sin\left(\dfrac{\widetilde Q +\widetilde u}{2}\right)\sin\left(\dfrac{y}{2}\right) \in L_o^2,
\]
thanks to \eqref{coseno}.
\end{proof}

\medskip

\section{The action of BT on parity manifolds. Proof of Theorem \ref{WK_orbital}}\label{4}

\subsection{BT and parity manifolds around 0 and $Q$} In what follows, we inted to get a better understanding of the image of the manifolds $\mathcal E_0$ and $\mathcal O_0$ under the B\"acklund Transformation \eqref{b1}-\eqref{b2}, at least in the case of small data. Along this section, we will rigorously justify Fig. \ref{Fig:2}, continuation of Fig. \ref{Fig:1}. Our first result is the following:

\begin{prop}\label{THMQzeroA}
Every sufficiently small  $H^1_e\times L_e^2$  perturbation of the vacuum state leads to a unique sufficiently small $H^1_o\times L_o^2$ perturbation of the SG static kink via a B\"acklund transformation.
\end{prop}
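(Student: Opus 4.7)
The plan is to apply the Implicit Function Theorem (IFT) to the functional $\widetilde{\mathcal F}$ from Lemma \ref{Parity}(d), in the static-kink case $\beta=0$, $a=a(0)=1$, $x_0=0$, $\delta=0$, where it is a $C^1$ map from $X_0=H^1_o\times L^2_o\times H^1_e\times L^2_e$ to $L^2_e\times L^2_o$. The BT relation between $(Q+\widetilde u_0,\widetilde s_0)$ and $(y_0,v_0)$ with parameter $a=1$ reads precisely $\widetilde{\mathcal F}(\widetilde u_0,\widetilde s_0,y_0,v_0)=0$, so the proposition amounts to solving this equation for $(\widetilde u_0,\widetilde s_0)\in H^1_o\times L^2_o$ given small $(y_0,v_0)\in H^1_e\times L^2_e$. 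A first check is that $\widetilde{\mathcal F}(0,0,0,0)=0$: by \eqref{coseno} one has $\cos(\widetilde Q/2)=\sech(x)$, and since $\widetilde Q_x=2\sech(x)$ the two cosine terms in \eqref{f1_new} sum to exactly $\widetilde Q_x$; the second component vanishes trivially.

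Next I would compute the partial Fr\'echet derivative of $\widetilde{\mathcal F}$ with respect to $(\widetilde u,\widetilde s)$ at the origin. Differentiating \eqref{f1_new}--\eqref{f2_new} and using $\sin(\widetilde Q/2)=\tanh(x)$, the cross terms in the second component cancel and one is left with the triangular linearization
\[
D_{(\widetilde u,\widetilde s)}\widetilde{\mathcal F}\big|_0(h_1,h_2)\,=\,\bigl(h_{1,x}+\tanh(x)\,h_1,\;h_2\bigr).
\]
The second diagonal block is the identity on $L^2_o$, so the IFT hypothesis reduces to proving that the first-order operator $L:h\mapsto h_x+\tanh(x)h$ is an isomorphism from $H^1_o$ onto $L^2_e$ (that the image lies in $L^2_e$ is automatic: $h$ odd implies both $h_x$ and $\tanh(x)h$ are even).

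The invertibility of $L$ on the odd subspace is the main potential obstacle, and the crux of the argument. Injectivity is immediate: the ODE $Lh=0$ has the one-dimensional kernel $c\,\sech(x)$, which is $L^2$-integrable but \emph{even}, hence meets $H^1_o$ only at zero. For surjectivity I would use the integrating factor $\cosh(x)$ to write the explicit formula
\[
h(x)\,:=\,\sech(x)\int_0^x\cosh(y)\,g(y)\,dy,
\]
which solves $Lh=g$ pointwise; when $g\in L^2_e$ the integrand is even, making $h$ odd and $h(0)=0$. The $H^1$ bound $\|h\|_{H^1}\lesssim\|g\|_{L^2}$ follows from the elementary pointwise inequality $\cosh(y)/\cosh(x)\le 2e^{y-x}$ for $0\le y\le x$, which turns $h$ restricted to $\R^+$ into a one-sided convolution of $|g|$ with $e^{-\cdot}\mathbf 1_{\R^+}\in L^1(\R)$; Young's inequality then gives $\|h\|_{L^2(\R^+)}\lesssim\|g\|_{L^2}$, and oddness extends the control to $\R$. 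The derivative estimate follows from $h_x=g-\tanh(x)h$, and by the open mapping theorem $L$ is a bounded isomorphism.

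Applying the IFT then produces the required $C^1$ map $(y_0,v_0)\mapsto(\widetilde u_0,\widetilde s_0)$ from a small $H^1_e\times L^2_e$-neighborhood of $0$ into $H^1_o\times L^2_o$, vanishing at the origin and solving $\widetilde{\mathcal F}=0$; this gives both existence and uniqueness, together with the quantitative estimate $\|(\widetilde u_0,\widetilde s_0)\|_{H^1\times L^2}\lesssim\|(y_0,v_0)\|_{H^1\times L^2}$. I would also remark that the parity observation above --- that the $L^2$ element $\sech(x)$ in the kernel of $\partial_x+\tanh(x)$ is even, and thus invisible from the odd sector --- is the linearized-BT shadow of the odd resonance $\tanh(x)$ of $\mathcal L_Q$ from Section \ref{2}, and is precisely what makes even-even perturbations of the vacuum lift to odd-odd perturbations of the kink.
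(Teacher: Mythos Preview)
Your proof is correct and follows essentially the same approach as the paper's own argument in Appendix~\ref{BT_kink_parity_appendix}: both apply the Implicit Function Theorem to $\widetilde{\mathcal F}$ at $(\beta,a,\delta)=(0,1,0)$, reduce (after noting that the $\widetilde s$-component is trivial) to inverting the operator $h\mapsto h_x+\tanh(x)h$ from $H^1_o$ to $L^2_e$, and solve it via the integrating factor $\cosh x$ with the explicit formula $h(x)=\sech(x)\int_0^x\cosh(y)g(y)\,dy$ and a Young-inequality bound. Your explicit identification of the kernel $\sech(x)$ as even (hence invisible on $H^1_o$) and your closing remark linking this to the odd resonance of $\mathcal L_Q$ are nice additions not spelled out in the paper's proof.
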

\begin{proof}
See the appendix, Section \ref{BT_kink_parity_appendix}
\end{proof}

\begin{rem}
In terms of the terminology introduced in \cite{MP}, the previous result is a \emph{lifting} lemma. We lift data from a neighborhood of zero towards data near the static kink. In \cite{HW,MP}, such a property could not hold without the addition of an extra orthogonality condition around the kink solution (see Section \ref{9} for another example). This extra condition was ensured via modulation techniques. Here we do not need such an additional condition because of the parities assumptions involved in the proof. It turns out that under (even, even) data around zero (e.g. small breathers), \emph{it is always possible to uniquely solve} the BT leading to (odd, odd) data around the kink (e.g. the wobbling kink), \emph{no matter the time $t\in\R$ at which the lifting is performed}.  
\end{rem}

What is probably more impressing is that the reciprocal of the previous result is also true. 

\begin{prop}\label{THMQzeroB}
Every sufficiently small  $H^1_o\times L_o^2$  perturbation of the SG static kink leads to a unique sufficiently small $H^1_e\times L_e^2$ perturbation of the vacuum state.
\end{prop}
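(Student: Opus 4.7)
The plan is to apply the implicit function theorem to the B\"acklund functional $\widetilde{\mathcal F}$ of Lemma \ref{Parity}(d). Taking $\beta=x_0=0$ (so $a=1$, $\widetilde Q = Q-\pi$), I view
\[
\widetilde{\mathcal F} : (H^1_o\times L^2_o)\times (H^1_e\times L^2_e)\longrightarrow L^2_e\times L^2_o
\]
as a $C^1$ map, with the first pair the (known, small) perturbation $(\widetilde u_0,\widetilde s_0)$ of the kink and the second pair the unknown vacuum perturbation $(y_0,v_0)$. The BT equations \eqref{b1}--\eqref{b2} with $a=1$ are equivalent to $\widetilde{\mathcal F}=0$, so locally solving for $(y_0,v_0)$ in terms of $(\widetilde u_0,\widetilde s_0)$ proves the proposition. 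One checks $\widetilde{\mathcal F}(0,0,0,0)=0$ from $\widetilde Q_x = Q_x = 2\sech x = 2\cos(\widetilde Q/2)$ (use \eqref{Qx} and \eqref{coseno}) and from $\widetilde Q_t\equiv 0$ when $\beta=0$.

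Next I would compute the Fr\'echet derivative $D_{(y_0,v_0)}\widetilde{\mathcal F}|_0$. Using $\sin(\widetilde Q/2)=\tanh x$, the two sine contributions to $\partial_y\widetilde{\mathcal F}_1$ cancel at the origin, producing the antidiagonal block
\[
D_{(y_0,v_0)}\widetilde{\mathcal F}\big|_0 \;=\; \begin{pmatrix} 0 & -\mathrm{Id} \\ L & 0 \end{pmatrix}, \qquad L := -\partial_x + \tanh x.
\]
Since $-\mathrm{Id}:L^2_e\to L^2_e$ is trivially an isomorphism, the whole question reduces to whether $L:H^1_e\to L^2_o$ is an isomorphism. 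This is the only serious step of the argument.

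For the invertibility of $L$, I would use the supersymmetric factorization obtained from a one-line commutator calculation,
\[
L^*L=-\partial_x^2+1,\qquad LL^*=-\partial_x^2+1-2\sech^2 x=\mathcal L_Q,
\]
with $\mathcal L_Q$ as in \eqref{L_Q}. The first identity shows that $L$ is an isometry from $H^1$ into $L^2$, because $\|Ly\|_{L^2}^2=\langle L^*Ly,y\rangle=\|y\|_{H^1}^2$; hence $L(H^1)$ is closed, and since $\ker L^*$ is spanned by $\sech x$ (the zero mode of $\mathcal L_Q$), we obtain $L(H^1)=\sech(x)^\perp$. Restricting to the even sector: for $y\in H^1_e$ one has $Ly\in L^2_o$ directly, and conversely any $f\in L^2_o\subset \sech(x)^\perp$ has a unique $H^1$ preimage $y$, which must be even by the identity $L(y(-\cdot))(x)=-(Ly)(-x)=f(x)$ (valid for odd $f$) combined with uniqueness in $H^1$. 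Thus $L:H^1_e\to L^2_o$ is a bijection.

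With the linearization an isomorphism, the implicit function theorem delivers a unique $C^1$ map $(\widetilde u_0,\widetilde s_0)\mapsto (y_0,v_0)$ defined on a small $H^1_o\times L^2_o$ neighborhood of the origin, with $(y_0,v_0)(0,0)=(0,0)$ and $\widetilde{\mathcal F}\equiv 0$ along the graph. Local Lipschitz continuity of this map converts the smallness hypothesis on the kink perturbation into smallness of the resulting vacuum perturbation. The main obstacle is precisely the invertibility of $L$ on the even subspace; everything else is routine unwinding of Lemma \ref{Parity}(d).
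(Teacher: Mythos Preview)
Your argument is correct and follows the same overall strategy as the paper: apply the implicit function theorem to $\widetilde{\mathcal F}$ in the setting of Lemma~\ref{Parity}(d), reduce to invertibility of the linearized operator $L=-\partial_x+\tanh x:H^1_e\to L^2_o$, and conclude. The only genuine difference is how you establish invertibility of $L$. The paper solves the ODE $y_x-\tanh x\,y=f$ explicitly via the integrating factor $\mu=\sech x$, uses the parity cancellation $\int_\R \sech\cdot f=0$ to write $y(x)=\cosh x\int_{-\infty}^x\sech(z)f(z)\,dz$, and then bounds $\|y\|_{H^1}\lesssim\|f\|_{L^2}$ by a Young-type inequality. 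You instead exploit the supersymmetric factorization $L^*L=-\partial_x^2+1$, $LL^*=\mathcal L_Q$, which immediately gives $\|Ly\|_{L^2}=\|y\|_{H^1}$ and identifies the range as $\sech(x)^\perp\supset L^2_o$. Your route is cleaner and more conceptual, and it makes the connection with the linearized operator $\mathcal L_Q$ (and hence with Section~\ref{5}) explicit; the paper's integral representation, on the other hand, is closer to what is needed later in Sections~\ref{9}--\ref{10}, where the same ODE must be solved with a shifted potential and without parity, and the explicit kernel is reused.
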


\begin{rem}
In the terminology of \cite{MP}, Proposition \ref{THMQzeroB} corresponds to a \emph{descent} from a vicinity of the kink towards a corresponding vicinity of the zero solution. This property was previously established in \cite{MP} in the case of breathers, 2-kinks and kink-antikinks. However, it was always necessary to adjust the parameter $\delta$ in the BT \eqref{f1_new}-\eqref{f2_new} to ensure this property. Here, from the proof it will be clear that, under the correct parity conditions, such an additional adjustment is not necessary.  
\end{rem}

\begin{rem}
Note that if perturbations of the SG kink are uniformly bounded in time, the proof of Proposition \ref{THMQzeroB} will ensure uniform bounds in time for perturbations of the zero solution as well. 
\end{rem}

\begin{proof}[Proof of Proposition \ref{THMQzeroB}] 
See the appendix, Section \ref{BT_kink_parity_2_appendix}.
\end{proof}

\begin{rem}
As a consequence of Propositions \ref{THMQzeroA} and \ref{THMQzeroB}, the section along the horizontal axis on the left panel in Fig. \ref{Fig:2} is uniquely related to a  section along the vertical axis on the right of the same figure. 
\end{rem}

Propositions \ref{THMQzeroA} and \ref{THMQzeroB} motivate the introduction of the first manifolds of initial data around the SG kink considered in this paper. Let
\be\label{O_Q}
\begin{aligned}
\mathcal O_Q := &~{} \{ (Q+\widetilde u_0, \widetilde s_0) ~ : ~  (\widetilde u_0, \widetilde s_0) \in \mathcal O_0= H^1_o\times L^2_o  \}, \\
\mathcal E_Q := &~{}\{ (Q+\widetilde u_0, \widetilde s_0) ~ : ~  (\widetilde u_0, \widetilde s_0) \in \mathcal E_0= H^1_e\times L^2_e  \}, \\
\mathcal {OE}_Q :=&~{} \{ (Q+\widetilde u_0, \widetilde s_0) ~ : ~  (\widetilde u_0, \widetilde s_0)\in H^1_o\times L^2_e \}.
\end{aligned}
\ee
Recall that only the first manifold is preserved by the flow in time, and that the wobbling kink $(W_\beta, \partial_t W_\beta)(t)$ in \eqref{wobbling} belongs to $\mathcal O_Q $. For some reasons to be explained below, the manifold $\mathcal {OE}_Q$ is well-suited for our problem, unlike an (even, even) manifold.

\medskip

Where is the wobbling kink in Propositions \ref{THMQzeroA} and \ref{THMQzeroB}? That is the purpose of the following paragraph.

\subsection{Breather-Wobbling kink's connexion}

Now we need the following classic connection between breathers and wobbling kinks, see e.g \cite{CQS}.

\begin{lem}\label{conection_WB}
Let $\beta\in (-1,1).$ Then breathers $B_\beta$ \eqref{breather} and wobbling kinks $W_\beta$ \eqref{wobbling} are connected via a BT of parameter $a=1$. More precisely, for all $t\in\R$,
\begin{align} 
    \partial_x W_\beta - \partial_t B_\beta \ & = \ \sin\left(\dfrac{W_\beta+B_\beta}{2}\right)+ \sin\left(\dfrac{W_\beta-B_\beta}{2}\right), \label{b1_new}
    \\ 
    \partial_t W_\beta  -\partial_x B_\beta   \ & = \ \sin\left(\dfrac{W_\beta+B_\beta}{2}\right)- \sin\left(\dfrac{W_\beta- B_\beta}{2}\right).\label{b2_new}
    \end{align}
\end{lem}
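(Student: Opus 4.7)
The plan is to verify the two identities \eqref{b1_new}--\eqref{b2_new} directly from the explicit formulas \eqref{breather} and \eqref{wobbling}. Applying the sum-to-product formulas with $A=W_\beta/2$ and $B=B_\beta/2$ reduces the task to proving
\begin{align*}
\partial_x W_\beta - \partial_t B_\beta &= 2\sin(W_\beta/2)\cos(B_\beta/2),\\
\partial_t W_\beta - \partial_x B_\beta &= 2\cos(W_\beta/2)\sin(B_\beta/2).
\end{align*}

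First I would derive compact expressions for the relevant half-angles. Since $W_\beta/2 = \operatorname{Arg}\bigl((U_\beta+iV_\beta)^2\bigr)$, one obtains
\[
\cos(W_\beta/2) = \frac{U_\beta^2 - V_\beta^2}{U_\beta^2+V_\beta^2}, \qquad \sin(W_\beta/2) = \frac{2U_\beta V_\beta}{U_\beta^2+V_\beta^2}.
\]
Setting $\tau := \tfrac{\beta}{\alpha}\sin(\alpha t)/\cosh(\beta x)$ so that $B_\beta = 4\arctan\tau$, the double-angle formulas yield
\[
\cos(B_\beta/2) = \frac{1-\tau^2}{1+\tau^2}, \qquad \sin(B_\beta/2) = \frac{2\tau}{1+\tau^2}.
\]
The derivatives appearing on the left-hand sides are then computed via $\partial W_\beta = 4(U_\beta\partial V_\beta - V_\beta\partial U_\beta)/(U_\beta^2+V_\beta^2)$ and $\partial B_\beta = 4\,\partial\tau/(1+\tau^2)$, for $\partial=\partial_x$ or $\partial_t$.

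After substitution and clearing the common denominator $(U_\beta^2+V_\beta^2)(1+\tau^2)$, each of the two identities becomes a polynomial identity in the elementary functions $\cosh(\beta x), \sinh(\beta x), e^{\pm x}, \cos(\alpha t), \sin(\alpha t)$, subject to the single constraint $\alpha^2+\beta^2=1$. Verification then reduces to matching monomials and repeatedly invoking this constraint.

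The main obstacle I anticipate is the algebraic bookkeeping: the products $U_\beta^2 \pm V_\beta^2$ and $U_\beta V_\beta$ each expand into a sizeable number of monomials in the above building blocks, and the cancellations, though forced, require careful organisation. A more conceptual alternative would be to invoke Bianchi's permutability theorem: the quadruple $(0, Q, B_\beta, W_\beta)$ ought to form a Bianchi diamond, with the arrow $\mathbb B(0)\xrightarrow{\ a=1\ } Q$ provided by Lemma \ref{prkk1} at $\beta=0$, and $B_\beta$ obtained from $0$ by a conjugate pair of complex BTs; the algebraic superposition formula then furnishes $\mathbb B(B_\beta)\xrightarrow{\ a=1\ } W_\beta$ for free, in the spirit of \cite{Lamb, CQS}. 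This route bypasses the explicit differentiation but still requires identifying $W_\beta$ with the output of that superposition, so for concreteness I would favour the direct computation above.
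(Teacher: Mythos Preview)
Your proposal is correct and follows essentially the same route as the paper's proof in Appendix~\ref{A}: compute the half-angle sines and cosines of $W_\beta$ and $B_\beta$, compute the four derivatives, clear denominators, and verify the resulting polynomial identity in the elementary building blocks subject to $\alpha^2+\beta^2=1$. The only cosmetic difference is that the paper works from the representation \eqref{wobblingK}, i.e.\ $W_\beta = Q + 4\arctan(g/h)$, which isolates the kink contribution and makes the half-angle formulas come out in terms of $\sech x$, $\tanh x$, and $f=g/h$ rather than your $(U_\beta,V_\beta)$; this slightly shortens the expansion but the verification is the same in spirit.
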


\begin{proof}
The proof is somehow standard, but we include it in Appendix \ref{A}.
\end{proof}

\begin{rem}\label{shift_remark}
Lemma \ref{conection_WB} also works if breathers and kinks are perturbed, at the same time, by parameters $t\mapsto t+ x_1$ and $x\mapsto x+ x_2$, with $x_1,x_2$ free real parameters. This is just a consequence of the invariance of the equation \eqref{sg1} under space and time translations. 
\end{rem}

\begin{rem}
Lemma \ref{conection_WB} is simple but it reveals a deep property of wobbling kinks. They are not immediately related to the zero solution (which is asymptotically stable under odd perturbations) as the breather was in \cite{MP}. Instead, even if they are odd solutions, wobbling kinks are related via BT to SG breathers, which are even nondecaying functions. The change in parity is a key element present in BT. 
\end{rem}

The following deep connections between the manifolds $\mathcal O_Q$ and $\mathcal E_0$ in \eqref{O_Q}, stated in Propositions \ref{THMA} and \ref{THMB}, will be key ingredients for the proof of Theorem \ref{WK_orbital}.

\begin{prop}\label{THMA}
Every sufficiently small  $H^1_e\times L_e^2$  perturbation of the SG breather leads to a unique sufficiently small $H^1_o\times L_o^2$ perturbation of the SG wobbling kink via a B\"acklund transformation.
\end{prop}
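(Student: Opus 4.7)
The plan is to mimic the strategy of Proposition~\ref{THMQzeroA}: I set up the B\"acklund transformation \eqref{b1}--\eqref{b2} with parameter $a=1$ (the value at which Lemma~\ref{conection_WB} provides the base link between breather and wobbling kink) as a functional equation at the initial time $t=0$, apply the implicit function theorem, and propagate in time via Lemma~\ref{Solutions_Backlund}. Working at $t=0$ is convenient because the backgrounds simplify dramatically: $B_\beta(0,\cdot)\equiv 0$, $\partial_t B_\beta(0,\cdot)=4\beta/\cosh(\beta x)$ is even, $W_\beta(0,\cdot)=\pi+\widetilde W_\beta$ with $\widetilde W_\beta$ odd, and $\partial_t W_\beta(0,\cdot)\equiv 0$, so the parity structure becomes fully transparent.

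Substituting the ansatz $(p_0,\,\partial_t B_\beta+p_1)$ and $(W_\beta+u_0,\,s_0)$ into \eqref{b1}--\eqref{b2}, and combining the sum-to-product identities with $\sin\!\bigl(\tfrac{\pi+z}{2}\bigr)=\cos(z/2)$, the B\"acklund system reduces to
\begin{align*}
\mathcal G_1 &:= \widetilde W_{\beta,x}+u_{0,x}-\partial_t B_\beta-p_1-2\cos\!\bigl(\tfrac{\widetilde W_\beta+u_0}{2}\bigr)\cos\!\bigl(\tfrac{p_0}{2}\bigr)=0, \\
\mathcal G_2 &:= s_0-p_{0,x}+2\sin\!\bigl(\tfrac{\widetilde W_\beta+u_0}{2}\bigr)\sin\!\bigl(\tfrac{p_0}{2}\bigr)=0.
\end{align*}
Lemma~\ref{conection_WB} yields $\mathcal G(0,0,0,0)=0$, and under the prescribed parities $(u_0,s_0)\in H^1_o\times L^2_o$ and $(p_0,p_1)\in H^1_e\times L^2_e$ an argument parallel to Lemma~\ref{Parity}(d) shows that $\mathcal G$ is a well-defined smooth map $(H^1_o\times L^2_o)\times(H^1_e\times L^2_e)\to L^2_e\times L^2_o$.

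I would then invoke the implicit function theorem. The linearization of $\mathcal G$ in $(u_0,s_0)$ at the origin is lower triangular: the off-diagonal entries vanish because they carry a factor $\sin(p_0/2)$ that collapses at $p_0=0$, the $(2,2)$ entry is the identity, and the $(1,1)$ entry is the first-order operator
\[
L_\beta:=\partial_x+\sin\!\bigl(\widetilde W_\beta/2\bigr),
\]
whose potential is odd and reduces at $\beta=0$ to the SG resonance $\tanh x$. The main technical obstacle is showing that $L_\beta:H^1_o\to L^2_e$ is a bijection: this is handled by the factorization $L_\beta=\mathrm w_\beta^{-1}\,\partial_x\,\mathrm w_\beta$ with positive even integrating factor $\mathrm w_\beta(x)$ (equal to $\cosh x$ when $\beta=0$ and comparable to $\cosh(\gamma x)$ at infinity in general), integrating $\partial_x(\mathrm w_\beta u_0)=\mathrm w_\beta g$ and using the oddness constraint on $u_0$ to pin down the integration constant uniquely. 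The IFT then produces a unique $C^1$ map $(p_0,p_1)\mapsto(u_0,s_0)$ near the origin with the quantitative $H^1\times L^2$ control required, and Lemma~\ref{Solutions_Backlund} (plus a density argument as in~\cite{MP}) propagates the B\"acklund relation to all $t\in\R$, yielding a global-in-time perturbation of the wobbling kink in $\mathcal O_Q$, with uniqueness inherited from the implicit function theorem.
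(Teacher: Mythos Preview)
Your approach is the paper's --- Implicit Function Theorem on the B\"acklund functionals, with the linearized first equation solved as an ODE via an integrating factor and the oddness constraint pinning down the free constant. The one real difference is that you specialize to $t=0$, where $B_\beta(0,\cdot)\equiv 0$ collapses the coefficient to $\sin(\widetilde W_\beta/2)$, whereas the paper runs the IFT at an \emph{arbitrary} fixed $t\in\R$ with the full coefficient $\sin(\widetilde W_\beta/2)\cos(B_\beta/2)$ and checks its asymptotics (see \eqref{growth_1}--\eqref{growth_2}). Your $t=0$ choice is legitimate and slightly cleaner at the ODE level, but it obliges you to propagate the B\"acklund relation in time, and Lemma~\ref{Solutions_Backlund} only gives the direction ``BT $\Rightarrow$ both solve SG'', not the persistence of the BT constraints under the joint SG flow --- that persistence is standard but is a separate computation you should state. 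The paper's direct-at-each-$t$ version is precisely what the proof of Theorem~\ref{WK_orbital0} consumes (Proposition~\ref{THMA} is invoked at every time, with a time-shifted breather and wobbling kink). One small correction: there is no Lorentz boost here, so the integrating factor is comparable to $\cosh x$ (the coefficient tends to $\pm 1$ as $x\to\pm\infty$), not $\cosh(\gamma x)$.
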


\begin{rem}
Once again, because of the parity assumptions, we will be able to prove this lifting result without using any type of modulation on the data. Compare with Proposition \ref{THMQzeroA}, which satisfies similar properties, but around the kink solution. This time, we will prove lifting around the wobbling kink.
\end{rem}

\begin{proof}
We follow the proof of Proposition \ref{THMQzeroA} very closely, but this time we need different B\"acklund functionals, as well as new parity properties not stated in Lemma \ref{Parity} because of their lack of simplicity.

\medskip

Without loss of generality we assume that $\beta$ is positive. For the sake of simplicity from now on we shall denote by $\widetilde{W}_\beta$ the function 
\be\label{tildeW}
\widetilde W_\beta :=W_\beta-\pi,
\ee
\noindent
which is odd in space. Now, let $(y,v)\in H^1_e\times L^2_e$ be small enough given perturbations and let $t\in\R$ fixed. Consider the system of perturbed equations given by the B\"acklund functionals \eqref{b1_new}-\eqref{b2_new} 
\be\label{47_48}
\begin{aligned}
\mathcal{F}_1&:=\widetilde W_{\beta,x} + \widetilde u_x-B_{\beta,t} -v- \cos\left( \frac{\widetilde W_\beta + \widetilde u +B_\beta +y}{2}\right) -\cos\left( \frac{\widetilde W_\beta + \widetilde u -B_\beta -y}{2}\right),  \\
\mathcal{F}_2&:=\widetilde W_{\beta,t} + \widetilde s -B_{\beta,x} -y_x - \cos\left( \frac{\widetilde W_\beta + \widetilde u +B_\beta +y}{2}\right)  +\cos\left( \frac{\widetilde W_\beta + \widetilde u -B_\beta -y}{2}\right),
\end{aligned}
\ee
where $\mathcal{F}_i=\mathcal{F}_i(y,v,\widetilde u,\widetilde s)$ for $i=1,2$. Notice that for any given triplet $(y,v,\widetilde u)\in H^1_e\times L^2_e\times H^1_o$, equation $\mathcal{F}_2\equiv 0$ is trivially solvable for $s(\cdot)$ and defines a function in $L^2_o$. On the other hand, 
\[
\mathcal{F}_1:H^1_e(\R)\times L^2_e(\R)\times H^1_o(\R)\times L^2_o(\R) \longrightarrow L^2_e(\R),
\]
defines a $\mathcal{C}^1$ functional in a neighborhood of zero and due to Lemma \ref{conection_WB} we have $\mathcal{F}_1(0,0,0,0)\equiv0$. Therefore, in order to conclude the proof it is enough to show that the G\^ateaux derivative of $\mathcal{F}_1$ defines a invertible bounded linear operator with continuous inverse. In fact, notice that linearizing directly on the definition of $\mathcal{F}_1$ above and by using basic trigonometric identities we are lead to solve
\begin{align}\label{u_ODE}
\widetilde u_x = -\sin\left( \frac{\widetilde W_\beta }{2}\right)\cos\left( \frac{B_\beta}{2}\right)\widetilde u +f, \, \hbox{ for some }\, f\in L^2_e.
\end{align}
Now, in order to solve equation \eqref{u_ODE}, we define $\mu_\beta(x)$ to be the solution of 
\[
\begin{aligned}
\mu_{\beta,x}-\sin\left(\dfrac{\widetilde{W}_\beta}{2}\right)\cos\left(\dfrac{B_\beta}{2}\right)\mu_\beta=&~{} 0, \\
\hbox{ that is } \ \mu_\beta(x)=&~{} \exp\left(\int_0^x \sin\left(\dfrac{\widetilde{W}_\beta}{2}\right)\cos\left(\dfrac{B_\beta}{2}\right)\right).
\end{aligned}
\]
At this stage it is important to point out that $\mu_\beta(x)$ is an even function. Moreover, notice that by using the definitions of 
$\widetilde{W}_\beta$ and $B_\beta$ in \eqref{tildeW}-\eqref{breather} we conclude that there exists $R>1$ sufficiently large such that \begin{align}
\hbox{for all } \, x>R,& \quad \sin\left( \frac{\widetilde W_\beta }{2}\right)\cos\left( \frac{B_\beta}{2}\right)\sim 1- e^{- \beta x}, \label{growth_1}
\\ \hbox{and for all } \, x<-R,& \quad \sin\left( \frac{\widetilde W_\beta }{2}\right)\cos\left( \frac{B_\beta}{2}\right)\sim -1+ e^{ \beta x}.\label{growth_2}
\end{align}
Therefore, we conclude that $\mu_\beta\to+\infty$ as $x\to\pm\infty$. On the other hand, due to the fact that both $\mu_\beta$ and $f$ are even functions, we conclude that there is only one odd function solving \eqref{u_ODE}, which is given by
\begin{align}\label{u_sol}
\widetilde u(x)=\dfrac{1}{\mu_\beta(x)}\int_{0}^x \mu_\beta(z) f(z)dz.
\end{align}
Finally, by using Young's inequality, the explicit form of $u$ and the exponential growth of $\mu_\beta$ given by \eqref{growth_1}-\eqref{growth_2} it is easy to check that
\[
\Vert \widetilde u\Vert_{L^2(\R)}\lesssim \Vert f\Vert_{L^2(\R)}.
\]
We refer to \cite{MP} Section $6$ for a complete proof of the latter inequality in a similar context. Notice that in order to conclude that $\widetilde u\in H^1_o$ it only remains to prove that $\widetilde u_x\in L^2$. Nevertheless, this is a direct consequence of the explicit form of $\widetilde u$ in \eqref{u_sol} and the previous analysis. Therefore, we conclude the proof by applying the Implicit Function Theorem.
\end{proof}

An even more striking property is that under no extra hypothesis we are able to prove a \emph{reciprocal} theorem.

\begin{prop}\label{THMB}
Every sufficiently small  $H^1_o\times L_o^2$  perturbation of the wobbling kink leads to a unique sufficiently small $H^1_e\times L_e^2$ perturbation of the SG breather.
\end{prop}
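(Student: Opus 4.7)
The plan is to mirror the strategy of Proposition \ref{THMA}, swapping the roles of data and unknown: now $(\widetilde u,\widetilde s)\in H^1_o\times L^2_o$ is the given small perturbation of the wobbling kink and $(y,v)\in H^1_e\times L^2_e$ is the unknown perturbation of the breather. I consider the same B\"acklund functional $\mathcal F=(\mathcal F_1,\mathcal F_2)$ of \eqref{47_48}, view it as a map $H^1_e\times L^2_e\to L^2_e\times L^2_o$ at fixed small $(\widetilde u,\widetilde s)$, and apply the Implicit Function Theorem at the origin. Since $\widetilde W_\beta$ is odd and $B_\beta$ is even in space, the sum-to-product identities give $\mathcal F_1\in L^2_e$ and $\mathcal F_2\in L^2_o$, while Lemma \ref{conection_WB} ensures $\mathcal F|_{0}=0$.

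The linearization at the origin is essentially triangular. Because $v$ enters $\mathcal F_1$ only through the term $-v$, the equation $\mathcal F_1=0$ can be solved smoothly for $v$ in terms of the remaining variables (with $v|_{0}=0$), while $\mathcal F_2$ does not contain $v$ at all. The whole system therefore reduces to a single first-order ODE for $y$, whose linearization at the origin reads, after sum-to-product,
\[
\mathcal L y:=-y_x+\sin\!\left(\frac{\widetilde W_\beta}{2}\right)\cos\!\left(\frac{B_\beta}{2}\right)y,\qquad \mathcal L:H^1_e(\R)\to L^2_o(\R).
\]
The whole argument thus reduces to showing that $\mathcal L$ is invertible with bounded inverse.

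Given $g\in L^2_o$, the general solution of $\mathcal L y=g$ reads
\[
y(x)=\mu_\beta(x)\left(C-\int_0^x\frac{g(z)}{\mu_\beta(z)}\,dz\right),
\]
where $\mu_\beta$ is exactly the integrating factor from the proof of Proposition \ref{THMA}. As $g/\mu_\beta$ is odd and $\mu_\beta$ is even, every such $y$ is automatically even. However, since $\mu_\beta$ grows exponentially at both infinities by \eqref{growth_1}--\eqref{growth_2}, the only $C$ producing a bounded (hence $L^2$) solution is $C=\int_0^{\infty}g/\mu_\beta$, which converges absolutely by Cauchy--Schwarz because $1/\mu_\beta\in L^2$. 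This yields, for $x\ge 0$,
\[
y(x)=\mu_\beta(x)\int_x^{\infty}\frac{g(z)}{\mu_\beta(z)}\,dz,
\]
extended to $x<0$ by evenness.

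The main obstacle is the $L^2$ bound for this $y$. Setting $w=z-x$ and using \eqref{growth_1}--\eqref{growth_2} to obtain $\mu_\beta(x)/\mu_\beta(x+w)\lesssim e^{-w}$ uniformly in $x\ge 0$ and $w\ge 0$, the restriction of $y$ to $[0,\infty)$ is dominated pointwise by $(K*|g|)(x)$ with kernel $K(s)=e^{s}\mathbf{1}_{s\le 0}\in L^1(\R)$. Young's convolution inequality then gives $\|y\|_{L^2([0,\infty))}\lesssim\|g\|_{L^2}$, and evenness transfers the bound to $L^2(\R)$; the missing $\|y_x\|_{L^2}$ control follows immediately by substituting back into the ODE. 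Hence $\mathcal L^{-1}:L^2_o\to H^1_e$ is bounded, the Implicit Function Theorem produces a unique $C^1$ solution $(y,v)\in H^1_e\times L^2_e$ depending smoothly on $(\widetilde u,\widetilde s)$ and vanishing with it, and the proof is complete.
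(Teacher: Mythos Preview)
Your proof is correct and follows essentially the same approach as the paper: the same triangular reduction (solve $\mathcal F_1$ for $v$, then an ODE for $y$), the same linearized operator, and the same Young-inequality estimate for the inverse. The only cosmetic difference is that you keep the growing integrating factor $\mu_\beta$ from Proposition~\ref{THMA} and write $y(x)=\mu_\beta(x)\int_x^\infty g/\mu_\beta$, whereas the paper switches to its reciprocal (decaying) $\mu_\beta$ and integrates from $-\infty$; the two formulas coincide because $\int_\R g/\mu_\beta=0$ by parity.
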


\begin{rem}
The fact that we do not need any extra hypothesis to prove this theorem is (again) a consequence of restricting ourselves to $H^1_o\times L^2_o$ perturbations. An analogous statement for perturbations in the whole space $H^1\times L^2$ would require, for instance, some orthogonality condition hypothesis over the perturbations.
\end{rem}

\begin{proof}

We shall closely follow the ideas of the proof of Proposition \ref{THMA}, with some key differences. 
Without loss of generality we assume that $\beta$ is positive. With $\widetilde{W}_\beta$ as in \eqref{tildeW}, 
let now $(u,s)\in H^1_o\times L^2_o$ be small enough given perturbations and let $t\in\R$ fixed. 
Consider once again the system of perturbed equations given by the B\"acklund functionals \eqref{47_48}: 
\[
\begin{aligned}
\mathcal{F}_1&:=\widetilde W_{\beta,x} +\widetilde  u_x-B_{\beta,t} -v- \cos\left( \frac{\widetilde W_\beta + \widetilde u +B_\beta +y}{2}\right) -\cos\left( \frac{\widetilde W_\beta + \widetilde u -B_\beta -y}{2}\right),  \\
\mathcal{F}_2&:=\widetilde W_{\beta,t} +\widetilde s -B_{\beta,x} -y_x - \cos\left( \frac{\widetilde W_\beta + \widetilde u +B_\beta +y}{2}\right)  +\cos\left( \frac{\widetilde W_\beta + \widetilde u -B_\beta -y}{2}\right),
\end{aligned}
\]
where $\mathcal{F}_i=\mathcal{F}_i(y,v,u,s)$ for $i=1,2$. Notice that for any given triplet $(y,u,s)\in H^1_e\times H^1_o\times L^2_o$, equation $\mathcal{F}_1\equiv 0$ is trivially solvable for $v(\cdot)$ and defines a function in $L^2_e$. On the other hand,
 \[
\mathcal{F}_2:H^1_e(\R)\times L^2_e(\R)\times H^1_o(\R)\times L^2_o(\R)\longrightarrow L^2_o(\R),
\]
defines a $\mathcal{C}^1$ functional in a neighborhood of zero and due to Lemma \ref{conection_WB} we have $\mathcal{F}_2(0,0,0,0)\equiv0$. Therefore, in order to conclude the proof it is enough to show that the G\^ateaux derivative of $\mathcal{F}_2$ defines a invertible bounded linear operator with continuous inverse. 
In fact, notice that linearizing directly on the definition of $\mathcal{F}_2$ above and by using basic trigonometric identities we are lead to solve 
\begin{align}\label{y_ODE}
y_x = \sin\left( \frac{\widetilde W_\beta }{2}\right)\cos\left( \frac{B_\beta}{2}\right)y +f, \, \hbox{ for some }\, f\in L^2_o.
\end{align}
Note that unlike \eqref{u_ODE} now we have a ``$+$'' sign in the right-hand side. As before, in order to solve equation \eqref{y_ODE}, we define $\mu_\beta(x)$ to be the solution of 
\[
\mu_{\beta,x}+\sin\left(\dfrac{\widetilde{W}_\beta}{2}\right)\cos\left(\dfrac{B_\beta}{2}\right)\mu_\beta=0,
\]
that is
\[
 \mu_\beta(x)=\exp\left(-\int_0^x \sin\left(\dfrac{\widetilde{W}_\beta}{2}\right)\cos\left(\dfrac{B_\beta}{2}\right)\right).
\]
Note as before, that $\mu_\beta(x)$ is an even function. Moreover, notice that by using the definitions of 
$\widetilde{W}_\beta$ and $B_\beta$ in \eqref{tildeW}-\eqref{breather} we conclude that there exists $R>1$ sufficiently large such that \begin{align}
\hbox{for all } \, x>R,& \qquad \sin\left( \frac{\widetilde W_\beta }{2}\right)\cos\left( \frac{B_\beta}{2}\right)\sim 1- e^{- \beta x}, \label{growth_1_ap}
\\ \hbox{and for all } \, x<-R,& \qquad \sin\left( \frac{\widetilde W_\beta }{2}\right)\cos\left( \frac{B_\beta}{2}\right)\sim -1+ e^{ \beta x}.\label{growth_2_ap}
\end{align}
Notice that, from the previous analysis, we deduce that in this case $\mu_\beta\to0$ exponentially fast as $x\to\pm\infty$. 
Moreover since $\mu_\beta$ and $f$ are even and odd functions respectively we conclude\[
\int_\R\mu_\beta(x) f(x)dx=0.
\]
Thus, solving \eqref{y_ODE} from $-\infty$ to $x$ we conclude that there is only one solution to \eqref{y_ODE} which is given by
\begin{align}\label{y_sol}
y=\dfrac{1}{\mu_\beta(x)}\int_{-\infty}^x \mu_\beta(z) f(z)dz.
\end{align}
Finally, we claim that due to the explicit form of $y(\cdot)$ we have
\[
\Vert u\Vert_{L^2(\R)}\lesssim \Vert f\Vert_{L^2(\R)}.
\]
In order to prove this we shall follow the ideas of \cite{MP}. In fact, first of all notice that by using \eqref{growth_2_ap} we deduce that for all $s\leq x\ll-1$ we have \begin{align*}
\left\vert\dfrac{\mu_\beta(s)}{\mu_\beta(x)}\right\vert\leq C\left\vert\dfrac{\cosh (s)}{\cosh(x)}\right\vert\leq C e^{s-x},
\end{align*}
for some constant $C$ only depending on $\beta$. Therefore, by using formula \eqref{y_sol} we conclude that for $x\ll-1$ we have\[
\vert y(x)\vert\leq Ce^{-x}\star \Big(f(\cdot)\id_{(-\infty,x]}(\cdot)\Big),
\]
where $\star$ stands for the convolution in the space variable. Since $y(\cdot)$ is an even function the same bound holds for $x\gg 1$. 
Therefore, by using Young's inequality we conclude that 
\[
\Vert u\Vert_{L^2(\R)}\lesssim \Vert f\Vert_{L^2(\R)}.
\]
Finally, notice that it only remains to prove that $y_x\in L^2$. Nevertheless, this is a direct consequence of the explicit form of $y(\cdot)$ in \eqref{y_sol} and the previous analysis. Therefore, we conclude the proof by applying the Implicit Function Theorem.
\end{proof}

%

As a corollary of Propositions \ref{THMA}-\ref{THMB} and the orbital stability of the SG breather for $H^1\times L^2$ perturbations (see \cite{MP}, Theorem $1.1$) we obtain the orbital stability of the SG wobbling kink (Theorem \ref{WK_orbital}). See Fig. \ref{Fig:2} for a graphic explanation of all previous results in this section.

\subsection{Proof of Theorem \ref{WK_orbital}} Now we state a quantitative version of Theorem \ref{WK_orbital}. 

\begin{thm}[Orbital stability of Wobbling kinks under odd perturbations]\label{WK_orbital0}
There exists $\eta_0>0$ such that the following holds. Let $(W_\beta,\partial_t W_\beta)$ be the wobbling kink written in \eqref{wobbling}.  Consider initial data of the form $(W_\beta +u_0, \partial_tW_{\beta} +s_0)$, with $(u_0,s_0) \in H^1_o\times L^2_o$ satisfying
\be\label{smallness_data}
\|(u_0,s_0)\|_{H^1\times L^2} <\eta<\eta_0.
\ee
Then, there exists $C_0>0$ and $x_1:\R\to \R$, $x_1=x_1(t)$ of class $C^1$ such that the solution $(\phi,\phi_t)(t,x)$ to SG satisfies
\[
\sup_{t\in\R} \| (\phi,\phi_t) (t) - (W_\beta (t+x_1(t),\cdot),(\partial_tW_{\beta}) (t+x_1(t),\cdot)\|_{H^1\times L^2} <C_0\eta.
\]
\end{thm}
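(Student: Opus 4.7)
I would use a descent–propagate–lift strategy based on the B\"acklund bridge between the wobbling kink and the breather (Lemma \ref{conection_WB}), the transfer statements in Propositions \ref{THMA}--\ref{THMB}, and the orbital stability of the SG breather in $H^1\times L^2$ established in \cite{MP}.

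Given $(u_0, s_0) \in H^1_o \times L^2_o$ with $\|(u_0,s_0)\|_{H^1\times L^2}<\eta$, let $\vec\phi(t)$ denote the SG solution with initial data $(W_\beta(0,\cdot) + u_0,\partial_t W_\beta(0,\cdot) + s_0)$. By the parity symmetry discussed after \eqref{wobblingK} (the equation for $\widetilde u:=\phi-W_\beta$ is odd-parity invariant), one has $\vec\phi(t)-\vec W_\beta(t,\cdot)\in H^1_o\times L^2_o$ for every $t$. I would then apply the descent Proposition \ref{THMB} at $t=0$ to produce a unique small $(y_0,v_0)\in H^1_e\times L^2_e$ with $\|(y_0,v_0)\|_{H^1\times L^2}\lesssim \eta$ such that $(W_\beta(0,\cdot)+u_0,\partial_t W_\beta(0,\cdot)+s_0)$ and $(B_\beta(0,\cdot)+y_0,\partial_t B_\beta(0,\cdot)+v_0)$ are BT-linked with parameter $a=1$ at time $t=0$.

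Next, I would evolve the lifted data: let $\vec\psi(t)$ be the SG solution starting from $(B_\beta(0,\cdot)+y_0,\partial_t B_\beta(0,\cdot)+v_0)$. Evenness of the breather background together with the evenness of $(y_0,v_0)$ and the parity invariance of \eqref{sg1} around even solutions imply $\vec\psi(t)-\vec B_\beta(t,\cdot)\in H^1_e\times L^2_e$ for every $t$. Because the BT identities \eqref{b1_new}--\eqref{b2_new} hold at $t=0$, the BT propagates in time by Lemma \ref{Solutions_Backlund} (extended to energy solutions through the density argument recalled in Section \ref{3}), so $\vec\phi(t)$ and $\vec\psi(t)$ remain BT-linked with $a=1$ for every $t\in\R$. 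The orbital stability of the SG breather from \cite{MP} applied to $\vec\psi$ furnishes a $C^1$ modulation $x_1:\R\to\R$ and a constant $C_1>0$ such that
\[
\sup_{t\in\R}\,\bigl\|\vec\psi(t)-\bigl(B_\beta(t+x_1(t),\cdot),\partial_t B_\beta(t+x_1(t),\cdot)\bigr)\bigr\|_{H^1\times L^2}\leq C_1\eta,
\]
where the absence of spatial-shift modulation reflects the fact that a nontrivial spatial translation would destroy evenness.

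To close the argument I would apply the lifting Proposition \ref{THMA} at each fixed $t\in\R$ to the small even perturbation of the shifted breather $\vec B_\beta(t+x_1(t),\cdot)$; Remark \ref{shift_remark} guarantees that the time-shift $t+x_1(t)$ is absorbed consistently in both BT identities. This yields a unique small $H^1_o\times L^2_o$ perturbation of $\vec W_\beta(t+x_1(t),\cdot)$ that is BT-linked to $\vec\psi(t)$; by uniqueness of the BT and the fact that $\vec\phi(t)$ already satisfies this link, it must coincide with $\vec\phi(t)-\vec W_\beta(t+x_1(t),\cdot)$. The quantitative bound from Proposition \ref{THMA} then gives
\[
\sup_{t\in\R}\,\bigl\|\vec\phi(t)-\bigl(W_\beta(t+x_1(t),\cdot),\partial_t W_\beta(t+x_1(t),\cdot)\bigr)\bigr\|_{H^1\times L^2}\leq C_0\eta.
\]
The main obstacle I anticipate is ensuring that the implicit function theorem constants in Propositions \ref{THMA}--\ref{THMB} are uniform in $t$ when the background breather/wobble is time-shifted by $x_1(t)$; this should follow from the translation invariance of the BT (Remark \ref{shift_remark}) together with the fact that the explicit representation \eqref{u_sol} and the asymptotic bounds \eqref{growth_1}--\eqref{growth_2} depend only on $\beta$, not on the base time.
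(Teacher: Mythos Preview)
Your proposal is correct and follows essentially the same descent--propagate--lift strategy as the paper: apply Proposition~\ref{THMB} at $t=0$ to pass to a small even perturbation of the breather, invoke the breather's orbital stability from \cite{MP} (noting that parity forces the spatial shift to vanish, leaving only the temporal shift $x_1(t)$), and then lift back via Proposition~\ref{THMA} applied around the time-shifted wobbling kink using Remark~\ref{shift_remark}. Your explicit discussion of BT propagation in time via Lemma~\ref{Solutions_Backlund}, the uniqueness argument for identifying the lifted solution with $\vec\phi(t)$, and the uniformity-in-$t$ of the implicit function constants are useful elaborations that the paper leaves implicit.
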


\begin{rem}
Recall that no shift on the $x$ variable is allowed in the wobbling kink since the data is odd. This implies, following the lines just below \eqref{wobblingK}, that the solution is odd for all time.
\end{rem}

\begin{proof}
We follow the ideas in \cite{MP}. Assume \eqref{smallness_data}. Proposition \ref{THMB} allows us to construct via BT a unique small, (even, even) perturbation $(y_0,v_0)$ of the breather solution $(B_\beta, B_{\beta,t})$ from \eqref{breather}. 
Therefore, from \cite{MP} we know that the breather is stable, up to some shifts $x_1(t)$ and $x_2(t)$. By parity, only $x_1(t)$ is not necessarily zero. Evolving in time the perturbation of the SG breather solution, and using Proposition \ref{THMA} with a suitably chosen wobbling kink (it must have the same shift $x_1(t)$, see Remark \ref{shift_remark} for details), we conclude the orbital stability.  
\end{proof}

\begin{figure}[h!]
\begin{center}
\begin{tikzpicture}[scale=0.9]
\filldraw[thick, color=black!50] (-2,0.05)--(2,0.05) -- (2,-0.05) --(-2,-0.05) -- (-2,0.05);
\filldraw[thick, color=lightgray!90] (0.05,2)--(-0.05,2) -- (-0.05,-2) --(0.05,-2) -- (0.05,2);
\draw[->] (-2,0) -- (3.5,0) node[below] {\small $(e,e)\in\mathcal E_0$};
\draw[->] (0,-2) -- (0,3) node[right] {\small $(o,o) \in\mathcal O_0$};
\node at (0,0){$\bullet$};
\node at (4/3,0){$\bullet$};
\node at (4/3,-0.4){\small $(B_\beta,B_{\beta,t})$};
\node at (0.85,2.3){\small $\longrightarrow P=0$};
\node at (0,0){$\bullet$};
\node at (0,1){$\bullet$};
\node at (-0.8,1){$(y_0,v_0)$};
\node at (-0.6,-0.4){$(0,0)$};
\end{tikzpicture}
\quad
\begin{tikzpicture}[scale=0.9]
\node at (0,3){BT};
\node at (0,2.5){$\longrightarrow$};
\node at (0,0){$(t=0)$};
\end{tikzpicture}
\quad
\begin{tikzpicture}[scale=0.9]
\filldraw[thick, color=lightgray!90] (-1,-1.05)--(1,0.95) -- (1,1.05) --(-1,-0.95) -- (-1,-1.05);
\filldraw[thick, color=black!50] (0.05,2)--(-0.05,2) -- (-0.05,-2) --(0.05,-2) -- (0.05,2);
\draw[thick,dashed] (-1.5,-1.5)--(2,2);
\draw[->] (-2,0) -- (4,0) node[below] {\small $(Q+e,e)\in\mathcal E_Q$};
\draw[->] (0,-2) -- (0,3) node[right] {\small $(Q+o,o) \in\mathcal O_Q$};
\node at (0,0){$\bullet$};
\node at (0.8,0.8){$\bullet$};
\node at (2.8,0.8){\small $(Q+\widetilde u_0,\widetilde s_0) \in \mathcal{OE}_Q$};
\node at (2.8,2){\small $\longrightarrow P=0$};
\node at (0,0){$\bullet$};
\node at (0,1){$\bullet$};
\node at (-1.2,1){\small $(W_\beta,W_{\beta,t})$};
\node at (0.6,-0.4){$(Q,0)$};
\end{tikzpicture}
\end{center}
\caption{A schematic representation of the action of the BT at time $t=0$ on the initial-data manifolds $\mathcal E_0$ and $\mathcal O_0$ in \eqref{EO}, as well as why Theorem \ref{WK_orbital} holds. Here $(o,o)$ and $(e,e)$ mean odd-odd and even-even data in $H^1\times L^2$. The horizontal submanifold on the left (containing the breather $B_\beta$ in \eqref{breather} and its time derivative) is sent via BT towards a vertical submanifold in $\mathcal O_Q$ (see \eqref{O_Q} for definitions) containing the wobbling kink $W_\beta$ from \eqref{wobbling}, and its time derivative. See Propositions \ref{THMQzeroA}, \ref{THMQzeroB} and Lemma \ref{conection_WB} for the rigorous proofs. On the other hand, the vertical submanifold on the left for which there is AS (Theorem \ref{thmkmm2}) is sent in Theorem \ref{MT00}, via BT, towards an ``oblique'' submanifold $\mathcal M_{\eta,0}$ preserving the zero momentum condition. On the right, only the vertical manifold is preserved by the flow, and the image via BT of $(y_0,v_0)$ is $(Q+\widetilde u_0,\widetilde s_0)$, with zero momentum (see Section \ref{7} for more details).}\label{Fig:2}
\end{figure}
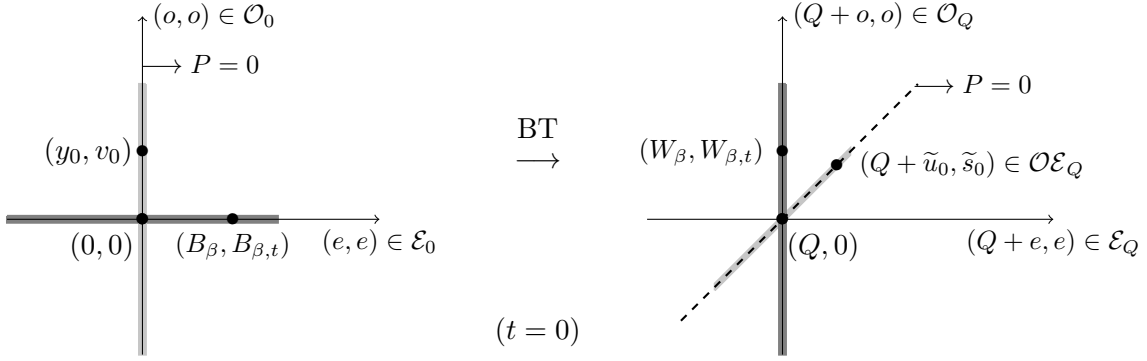

We finish this section with a simple lemma (see \cite{MVD} for instance) stating that wobbling kinks cannot be orbitally stable for general data, in the sense that general perturbations may not lead to the evolution of a kink plus a perturbation which \emph{is periodic in time}. In that sense, \emph{the wobbling kink \eqref{wobbling} ceases to exist}. However, there is a family of 3-soliton solutions which represents the interaction between a static kink and a moving breather. The stability of this family, as already stated in the introduction, is an open problem. 

\begin{lem}\label{3soliton}
There exists a family of SG 3-solitons with frequency $\beta\in (-1,1)$ and speed $v\in(-1,1)$, for which the wobbling kink \emph{wobbling} is the case of zero momentum, i.e. $v=0$. This family is explicitly given by
\begin{align}\label{QB}
W_{\beta,v}(t,x):=Q(x)-4\arctan\left(\dfrac{\beta}{i\alpha}\tan( \Theta-\overline{\Theta})\right),
\end{align}
where $\overline{\Theta}$ denotes the complex conjugate of $\Theta$, which is given by
\[
\Theta:=\arctan\left(\dfrac{\beta a_v+i\alpha a_v+1}{\beta a_v+i\alpha a_v-1}\tan\left(\arctan e^x-\arctan e^{\gamma[\beta(x-vt)-i\alpha (t-vx)]}\right)\right),
\]
and the parameters $a_v$, $\al$ and $\gamma$ are given by
\[
a_v:=\left(\dfrac{1+v}{1-v}\right)^{1/2}, \quad \al=\sqrt{1-\beta^2}, \quad \hbox{and}\quad \gamma=\dfrac{1}{\sqrt{1-v^2}}.
\]
(compare with \eqref{a(beta)}).
\end{lem}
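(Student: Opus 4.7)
The plan is to build $W_{\beta,v}$ as a 3-soliton solution of SG via iterated B\"acklund transformations starting from the vacuum, using the classical nonlinear superposition principle (Bianchi's permutability theorem). Recall that, given a seed $\phi_0$ and two BTs $\phi_1,\phi_2$ of $\phi_0$ with distinct parameters $a_1\ne a_2$, the permutability theorem yields an algebraic formula for a common successor $\phi_{12}$:
\[
\tan\!\left(\dfrac{\phi_{12}-\phi_0}{4}\right)=\dfrac{a_1+a_2}{a_1-a_2}\,\tan\!\left(\dfrac{\phi_1-\phi_2}{4}\right).
\]
This is purely algebraic and needs only that $\phi_1,\phi_2$ satisfy \eqref{b1}--\eqref{b2} with seed $\phi_0$; the resulting $\phi_{12}$ is automatically a SG solution by Lemma \ref{Solutions_Backlund}.

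First I would take $\phi_0\equiv 0$ and list the three elementary BTs of the vacuum needed: $(i)$ the static kink $Q(x)=4\arctan e^{x}$ with parameter $a=1$ (Lemma \ref{prkk1}); and $(ii)$--$(iii)$ a complex-conjugate pair of ``imaginary'' kinks with parameters $a_{\pm}:=a_v(\beta\pm i\alpha)$, which correspond to a Lorentz-boosted, complexified seed of the moving breather $B_{\beta,v}$. Explicitly, these read $\phi_{\pm}(t,x)=4\arctan\bigl(e^{\gamma[\beta(x-vt)\mp i\alpha(t-vx)]}\bigr)$; although complex-valued, they will combine into a real solution. Applying the permutability formula to pair $Q$ with $\phi_{+}$ (respectively with $\phi_{-}$) yields exactly the quantities $\Theta$ and $\overline{\Theta}$ appearing in the statement, since the prefactor $\frac{\beta a_v+i\alpha a_v+1}{\beta a_v+i\alpha a_v-1}$ is precisely $\frac{a_{+}+1}{a_{+}-1}$.

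A second application of the superposition formula merges the two intermediate solutions and, after using a trigonometric identity for $\tan(\Theta-\overline{\Theta})$ (which is purely imaginary, so that $\frac{\beta}{i\alpha}\tan(\Theta-\overline{\Theta})$ is real), one recovers precisely \eqref{QB}. Thus the formula defines a genuine SG solution for each $(\beta,v)\in(-1,1)^{2}$. Finally, I would verify the zero-momentum limit by setting $v=0$: then $\gamma=1$, $a_v=1$, and the exponent collapses to $\beta x-i\alpha t$, so $\phi_{\pm}(t,x)=4\arctan\bigl(e^{\beta x\mp i\alpha t}\bigr)$. Separating real and imaginary parts via the identities $\cosh(\beta x\pm i\alpha t)=\cosh(\beta x)\cos(\alpha t)\pm i\sinh(\beta x)\sin(\alpha t)$ (and analogous ones for $\sinh$) and simplifying reduces \eqref{QB} to \eqref{wobbling}, confirming that $W_{\beta,0}=W_{\beta}$. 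The main obstacle is purely bookkeeping: identifying the compact closed form \eqref{QB} with the output of two applications of the permutability formula and then carrying out the $v\to 0$ algebraic reduction; satisfaction of SG itself is automatic from the BT framework.
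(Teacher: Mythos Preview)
Your proposal is correct and in fact more explanatory than the paper's own treatment. The paper does not give a self-contained proof of the lemma: it states the formula \eqref{QB} as essentially known (citing \cite{MVD}), and only addresses the $v=0$ reduction separately in Appendix~\ref{B}. There, the argument is a direct algebraic computation: one rewrites $\tan(\Theta-\overline{\Theta})=(\Upsilon-\overline{\Upsilon})/(1-\Upsilon\overline{\Upsilon})$ with $\Upsilon$ explicit, simplifies to a ratio $A_1/A_2$ of real expressions, and observes that at $v=0$ this collapses to the wobbling kink formula \eqref{wobblingK}.

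Your approach instead \emph{derives} \eqref{QB} from first principles via two applications of Bianchi permutability with parameters $a=1$ and $a_\pm=a_v(\beta\pm i\alpha)$, which is the classical route and makes transparent why the prefactor $\tfrac{\beta a_v+i\alpha a_v+1}{\beta a_v+i\alpha a_v-1}$ appears. This buys you an actual proof that $W_{\beta,v}$ solves SG (through Lemma~\ref{Solutions_Backlund}), something the paper takes for granted. For the $v=0$ step your plan and the paper's Appendix~\ref{B} coincide: both are bookkeeping with hyperbolic--trigonometric identities.
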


Before finishing this section, some important remarks are in order.

\begin{rem}
Some snapshots of this 3-soliton family \eqref{QB} are presented in Figure \ref{Fig:6}.
\end{rem}

\begin{rem}\label{convergencia_a_WK}
The family $(W_{\beta,v},\partial_t W_{\beta,v})(t,x)$ in \eqref{QB} converges naturally to the wobbling kink \eqref{wobbling} when $v\to 0$. See Appendix \ref{B} for details.
\end{rem}

\begin{rem}
The family $(W_{\beta,v},\partial_t W_{\beta,v})(t=0)$ in \eqref{QB} can also be regarded as an essentially (odd, even) perturbation of the kink $(Q,0)$. However, as we shall see in Section \ref{7}, it does not belong to the class of initial data under which Theorem \ref{MT00} holds, due to the fact that it has nonzero momentum. Note also that these initial data leads to a perturbation on the position of the kink.
\end{rem}

\begin{figure}[h!]
   \centering
   \includegraphics[scale=0.3]{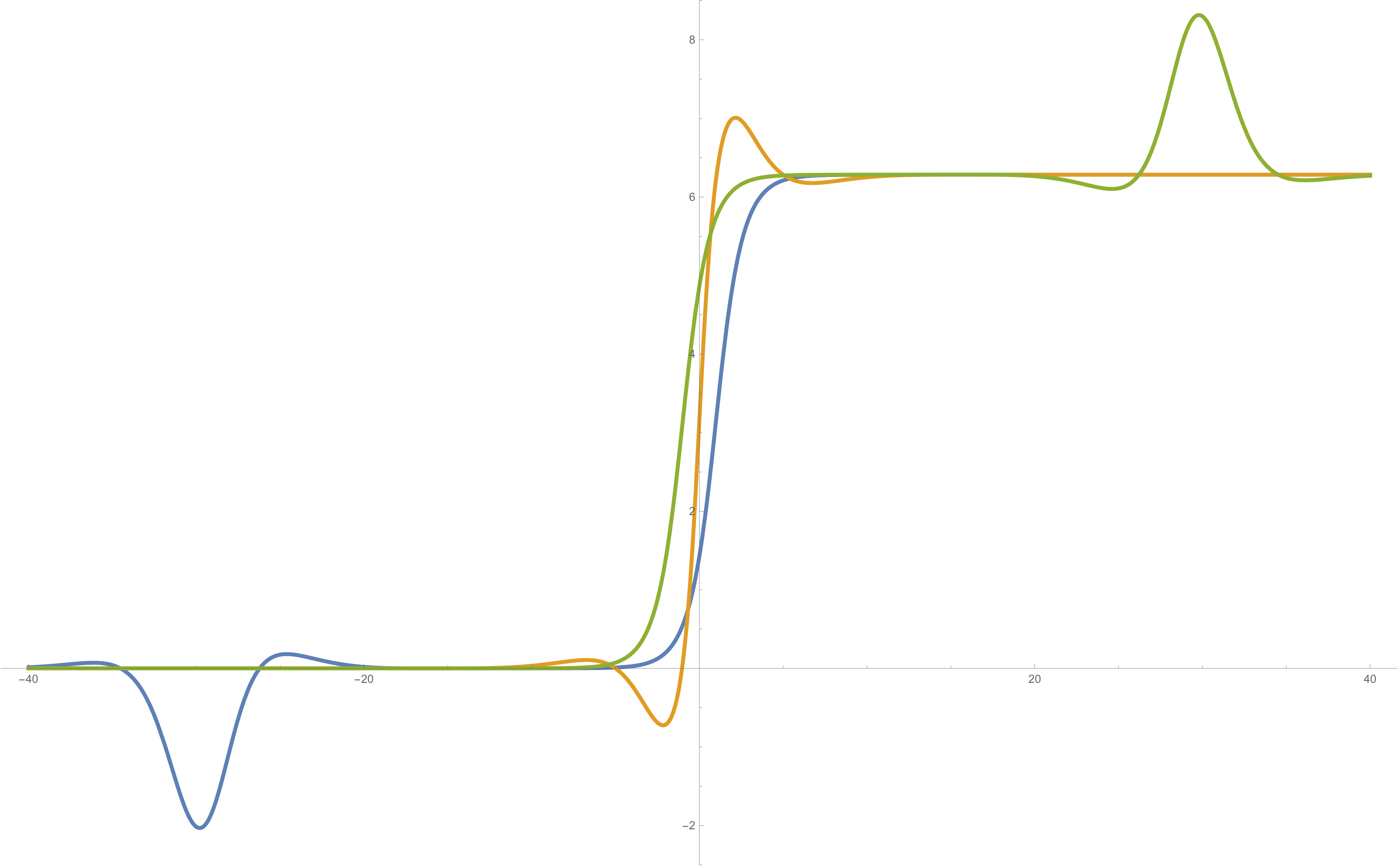} 
   \caption{The general 3-soliton or generalized wobbling kink \eqref{QB} with $\beta=0.5$ and $v=0.4$ at times $t=-55.3$ (blue), $t=0$ (orange), and $t=55.3$ (green). This solution represents a breather colliding with a static kink. Notice that, after the ``breather'' collides against the kink, the latter shifts. Moreover, notice that at time $t=0$ this corresponds to an odd perturbation of the kink, that is, $(W_{\beta,v}-Q)(t=0)$ is odd.}
   \label{Fig:6}
\end{figure}

\section{Linearized B\"acklund transformations and resonances}\label{5}

Having proved Theorem \ref{WK_orbital}, now we make an interesting digression from the proof of the remaining Theorem \ref{MT00}, which will start in the next Section. 

\medskip

An essential point where SG \eqref{sg1} and $\phi^4$ \eqref{phi4} meet is at the level of linearized transformations, even when only one of them is integrable (this is one of the reasons why the AS for the $\phi^4$ kink $H$ is harder).  Section \ref{2} first showed such an analogy at the level of linearized operators around kinks, as well as resonances. 

\medskip

In this section we present a new point of contact between both theories, maybe not recognized before in full detail. This connection is of independent interest, in view of recent advances in AS problem via dual methods \cite{KMM3}.

\subsection{The SG case}  Let us consider the {\bf linearized B\"acklund transformations} (LBT) around the SG kink solution (see \eqref{b1}-\eqref{b2} and \eqref{b1_new}-\eqref{b2_new})
\be\label{LBT}
\begin{cases}
\partial_x \phi -\partial_t \varphi =-\sin\left( \frac{\widetilde Q}{2}\right) \phi\\
\partial_t \phi -\partial_x \varphi= -\sin\left( \frac{\widetilde Q}{2}\right) \varphi.
\end{cases}
\ee
Here, $\phi$ and $\varphi$ are $C^2$ functions depending on $(t,x)$. Some interesting properties of \eqref{LBT} are stated in the following result (maybe well-known in the literature), which are just consequence of \eqref{b1_new}-\eqref{b2_new}.

\begin{lem}[LBT in the SG case]\label{LBT_SG}
One has that
\ben
\item If $(\phi,\varphi)$ solves \eqref{LBT}, then they satisfy 
\be\label{KKK}
\phi_{tt} +\mathcal L_{Q} \phi =0 \qquad \hbox{and} \qquad \varphi_{tt} -\varphi_{xx} +\varphi =0,
\ee
for $\mathcal L_Q$ given in \eqref{L_Q}, respectively. The converse is not necessarily true.
\smallskip
\item (Translations of kernels). $(\phi,\varphi)=(Q',0)$ solves \eqref{LBT}.
\smallskip
\item Let
\be\label{L_M}
L:= \frac14\partial_\beta W_\beta \Big|_{\beta=0} = \tanh x \cos t, \qquad M:=  \frac14\partial_\beta B_\beta \Big|_{\beta=0} = \sin t.
\ee
be the corresponding resonances of SG around the kink and zero, generated by the wobbling kink and breather respectively. Then $(\phi,\varphi)=(L,M)$ satisfies \eqref{LBT}.
\een
\end{lem}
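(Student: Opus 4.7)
The plan is to reduce the LBT \eqref{LBT} to a much simpler form using one trigonometric identity, then verify the three claims by cross-differentiation (for (1)) and direct substitution (for (2) and (3)).

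First I would observe that for the static kink $Q=4\arctan e^{x}$, the double-angle formula $\cos(2\arctan e^{x})=(1-e^{2x})/(1+e^{2x})$ gives $\cos(Q/2)=-\tanh x$ and $\sin(Q/2)=\sech x$. Since $\widetilde Q=Q-\pi$, this yields the clean identities
\be\label{plan_red}
\sin\!\bigl(\widetilde Q/2\bigr)=\tanh x,\qquad \cos\!\bigl(\widetilde Q/2\bigr)=\sech x.
\ee
Hence \eqref{LBT} becomes
\be\label{plan_LBT}
\phi_x-\varphi_t=-\tanh x\cdot\phi,\qquad \phi_t-\varphi_x=-\tanh x\cdot\varphi.
\ee

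For part (1), I would cross-differentiate \eqref{plan_LBT}. Differentiating the first equation in $t$ and the second in $x$ and subtracting, using $\partial_x\tanh x=\sech^2x$ and then eliminating $\phi_t-\varphi_x$ via the second equation, produces
\[
\varphi_{tt}-\varphi_{xx}=\tanh x\,(\phi_t-\varphi_x)-\sech^2x\cdot\varphi=-(\tanh^2x+\sech^2x)\,\varphi=-\varphi,
\]
i.e. the free Klein--Gordon equation with mass one. Differentiating the first equation in $x$ and the second in $t$ and subtracting, the analogous manipulation yields $\phi_{tt}-\phi_{xx}=(1-2\tanh^2x)\phi$; rewriting via $1-2\tanh^2x=-(1-2\sech^2x)$ gives exactly $\phi_{tt}+\mathcal L_Q\phi=0$ with $\mathcal L_Q$ as in \eqref{L_Q}. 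The converse of (1) is not true simply for a rank/dimension reason: \eqref{LBT} is a first-order $2\times 2$ system that prescribes strictly less Cauchy data than the pair of second-order equations in \eqref{KKK}, so generic solutions of \eqref{KKK} will not arise from a common LBT pair.

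Parts (2) and (3) are then verifications in \eqref{plan_LBT}. For (2), $(\phi,\varphi)=(Q',0)$ makes the second equation trivial, while the first reduces to $Q_{xx}=-\tanh x\cdot Q_x$, which is immediate from $Q_x=2\sech x$ and $Q_{xx}=-2\sech x\tanh x$. For (3), $(\phi,\varphi)=(\tanh x\cos t,\sin t)$ gives $\phi_x-\varphi_t=(\sech^2x-1)\cos t=-\tanh^2x\cos t=-\tanh x\cdot\phi$ and $\phi_t-\varphi_x=-\tanh x\sin t=-\tanh x\cdot\varphi$, as required. I do not expect any serious obstacle here: once the identity \eqref{plan_red} is noted, the whole lemma collapses to a two-line cross-differentiation and two one-line substitutions, and the main content is conceptual — the resonant mode $L$ and the oscillation $M$ are intertwined by the LBT exactly as the wobbling kink and the breather are at the nonlinear level (Lemma \ref{conection_WB}).
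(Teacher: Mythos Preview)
Your proposal is correct and follows essentially the same route as the paper: reduce the coefficient via $\sin(\widetilde Q/2)=\tanh x$, cross-differentiate \eqref{plan_LBT} to obtain \eqref{KKK}, and verify (2)--(3) by direct substitution. Your brief ``rank/dimension'' remark on the failure of the converse is actually slightly more informative than the paper, which simply asserts it; note also that your computation $Q_{xx}=-\tanh x\cdot Q_x$ is correct (the paper's displayed identity \eqref{calculo interesante} carries a sign typo, but the conclusion that $(Q',0)$ solves \eqref{LBT} is the same).
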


\begin{rem}
Similar conclusions as in item (3) above are obtained if the periodic functions in time are correctly changed: in \eqref{L_M} $(\tilde L, \tilde M) = \Big(-\tanh x \sin t, \cos t\Big)$ is also solution of \eqref{LBT}. This is consequence of the fact that time derivatives of solutions also solve \eqref{LBT} (and \eqref{KKK}).
\end{rem}

\begin{proof}[Proof of Lemma \ref{LBT_SG}]
We start by proving the first point. In fact, by differentiating both equations in \eqref{LBT} with respect to space and time respectively we obtain
\[
\partial_{tt}\phi-\partial_{tx}\varphi=-\sin\left(\tfrac{\widetilde{Q}}{2}\right)\partial_t\varphi \quad \hbox{and} \quad 
\partial_{xx}\phi-\partial_{xt}\varphi=-\sech^2(x)\phi-\sin\left(\tfrac{\widetilde{Q}}{2}\right)\partial_x\phi.
\]
Therefore, using that $\sin\left(\tfrac{\widetilde{Q}}{2}\right)=\tanh(x)$ we obtain 
\[
\partial_{tt}\phi-\partial_{xx}\phi=\sech^2(x)\phi-\sin\left(\tfrac{\widetilde{Q}}{2}\right)(\partial_t\varphi-\partial_x\phi)=-(1-2\sech^2(x))\phi.
\]
In the same way, by differentiating both equations in \eqref{LBT} in the opposite order, that is, with respect to time and space respectively, we conclude\[
\partial_{tt}\varphi-\partial_{xx}\varphi=-\sech^2(x)\varphi+\sin\left(\tfrac{\widetilde{Q}}{2}\right)\left(\partial_t\phi-\partial_x\varphi\right)=-\varphi.
\]
Now, recalling that the static kink satisfies \eqref{eqn:BT_Q} with $a=1$ we immediately obtain 
\be\label{calculo interesante}
Q''=2\left(\sin\left(\dfrac{Q}{2}\right)\right)'=2\cos\left(\dfrac{Q}{2}\right)\sin\left(\dfrac{Q}{2}\right)=\sin\left(\dfrac{\widetilde{Q}}{2}\right)Q'.
\ee
Finally, differentiating \eqref{L_M} we obtain \[
L_x= \sech^2 x\cos t,\quad L_t=-\tanh x \sin t, \quad M_x\equiv 0 \quad \hbox{and}\quad M_t=\cos t.
\]
Replacing these formulas into \eqref{LBT} we obtain
\[
L_x-M_t=(\sech^2x-1)\cos t=-\tanh^2(x)\cos t=-\sin\left(\tfrac{\widetilde{Q}}{2}\right)L,
\]
and
\[
L_t-M_x=-\tanh x \sin t=-\sin\left(\tfrac{\widetilde{Q}}{2}\right)M.
\]
The proof is complete.
\end{proof}

\subsection{The $\phi^4$ case as extension of SG} What is the corresponding LBT for $\phi^4$? Although $\phi^4$ is not integrable, and apparently has no BT, it has essentially two suitable LBT around their soliton states. Indeed, for $H$ as in \eqref{H},
\be\label{LBT_p4}
\begin{cases}
\partial_x \phi -\partial_t \varphi =-\sqrt{2} H \phi\\
\partial_t \phi -\partial_x \varphi= - \sqrt{2} H \varphi,
\end{cases}
\ee
is a LBT for $\phi^4$. Recall that $H'=(1-H^2)/\sqrt{2}$. The resonance in \eqref{Reso_phi4} enters in \eqref{LBT_p4} as follows:

\begin{lem}[LBT in the $\phi^4$ case]\label{LBT_p4_1}
Let \eqref{LBT_p4} be the LBT of $\phi^4$. Then one has the following properties.
\ben
\item If $(\phi,\varphi)$ solves \eqref{LBT_p4}, then 
\[
\phi_{tt} +\mathcal L_{H} \phi =0 \quad \hbox{ and } \quad \varphi_{tt} +\widetilde{\mathcal L}_{H} \varphi =0,
\]
for $\mathcal L_H$ given in \eqref{L_H} and 
\be\label{tildeL}
\widetilde{\mathcal L}_{H}:= -\partial_x^2 +1 +H^2=-\partial_x^2 +2 -(1-H^2).
\ee
The converse is not necessarily true. Note that $\widetilde{\mathcal L}_{H} \geq 0$ by definition.
\smallskip
\item $(\phi,\varphi)=(H',0)$ solves \eqref{LBT_p4}.
\smallskip
\item {\color{black}$\sigma (\widetilde{\mathcal L}_{H})=\{\frac32\} \cup [2,\infty)$. $\la=0$ is not an eigenvalue, $\la=\frac32$ is the first eigenvalue associated to the eigenfunction $Y_0:=-\frac1{\sqrt{3}}\sech \left(\frac{x}{\sqrt{2}}\right)$, and $H $ is odd resonance at $\la=2$.}
\smallskip
\item (Connection between internal modes). Recall $Y_1$ from \eqref{Y_1}. Then, $(\phi, \varphi)$ given by 
\be\label{L_M_3}
(\phi, \varphi) = \Big(Y_1(x) \sin (t\sqrt{3/2}), Y_0(x) \cos (t\sqrt{3/2}) \Big)
\ee
solves \eqref{LBT_p4}.
\item Let
\be\label{L_M_4}
L_4:=-\left(1-\frac32\sech^2\left(\frac{x}{\sqrt{2}}\right) \right) \sin(\sqrt{2}t), \qquad M_4:=  \tanh \left(\frac{x}{\sqrt{2}}\right) \cos(\sqrt{2}t).
\ee
be the corresponding resonances of $\phi^4$ around the kink and ``around the internal mode'', respectively. Then $(\phi,\varphi)=(L_4,M_4)$ satisfies \eqref{LBT_p4}.
\een
\end{lem}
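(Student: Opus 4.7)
\smallskip

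The plan is to follow closely the structure of the proof of Lemma \ref{LBT_SG}, exploiting the key identity $\sqrt{2}H' = 1-H^2$ (and its derivative $H'' = -\sqrt{2}HH'$) in place of the SG kink identity \eqref{calculo interesante}. All five items are verifiable by direct computation once the LBT system \eqref{LBT_p4} and the spectral structure of $\widetilde{\mathcal L}_H$ are understood; the only conceptual step is the spectral claim (3), while items (1), (2), (4), (5) reduce to algebraic manipulations after some care with normalization constants.

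\smallskip

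For (1), I would differentiate the first equation of \eqref{LBT_p4} with respect to $x$ and the second with respect to $t$, and subtract, to eliminate the mixed derivative $\phi_{xt}=\varphi_{tx}$. Using the LBT relations again on the remaining first-order expressions, one obtains
\[
\phi_{tt}-\phi_{xx} = \sqrt{2}H(\phi_x-\varphi_t) + \sqrt{2}H'\phi = -2H^2\phi + (1-H^2)\phi = (1-3H^2)\phi,
\]
which is precisely $\phi_{tt}+\mathcal L_H\phi=0$. The symmetric procedure (differentiating the first equation in $t$ and the second in $x$) yields $\varphi_{tt}-\varphi_{xx} = -(1+H^2)\varphi$, i.e.\ $\varphi_{tt}+\widetilde{\mathcal L}_H\varphi=0$. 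Item (2) is then a one-line check: with $\varphi=0$, the second equation is trivially satisfied, and the first reads $H''=-\sqrt{2}HH'$, which is obtained by differentiating $\sqrt{2}H' = 1-H^2$. The converse is not automatic because the second-order system admits many more solutions than the first-order LBT.

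\smallskip

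For (3), I would reduce $\widetilde{\mathcal L}_H = -\partial_x^2 + 2 - \sech^2(x/\sqrt{2})$ to the classical Pöschl--Teller operator by the change of variables $u=x/\sqrt{2}$, giving (after multiplication by $2$) the operator $-\partial_u^2 - 2\sech^2 u + 4$. The standard $n=1$ Pöschl--Teller spectral analysis provides a single bound state at energy $-1$ with eigenfunction $\sech u$, and continuous spectrum on $[0,\infty)$; undoing the rescaling yields $\sigma(\widetilde{\mathcal L}_H) = \{3/2\}\cup[2,\infty)$ with eigenfunction proportional to $\sech(x/\sqrt{2})$. The normalization constant $-1/\sqrt{3}$ in $Y_0$ is fixed by the requirement that $(Y_1,Y_0)$ satisfy the LBT with the correct sign in item (4). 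Finally, for the resonance at the bottom of the continuum $\lambda=2$, a direct substitution shows that $\psi=H$ satisfies $\psi'' + \sech^2(x/\sqrt{2})\psi = 0$, so $H$ is an odd (bounded, non-$L^2$) generalized eigenfunction, i.e.\ the odd resonance.

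\smallskip

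Items (4) and (5) are verified directly by substitution; the main obstacle in (4) is that both the precise coefficient $-1/\sqrt{3}$ in $Y_0$ and the frequency $\sqrt{3/2}$ must match. Computing $Y_1' = \tfrac{1}{\sqrt{2}}\sech(x/\sqrt{2})(1-2\tanh^2(x/\sqrt{2}))$ and $Y_0\sqrt{3/2} = -\tfrac{1}{\sqrt{2}}\sech(x/\sqrt{2})$, the first LBT equation reduces to the identity $-\sqrt{2}\sech(x/\sqrt{2})\tanh^2(x/\sqrt{2})=-\sqrt{2}HY_1$, which is trivially true; the second reduces to $(\sqrt{3/2}-1/\sqrt{6})\sech(x/\sqrt{2})\tanh(x/\sqrt{2}) = \sqrt{2/3}\sech(x/\sqrt{2})\tanh(x/\sqrt{2})$, which holds since $\sqrt{6}/2-\sqrt{6}/6=\sqrt{6}/3$. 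For (5), the same bookkeeping gives, on the one hand, $\partial_x L_4 -\partial_t M_4 = \tfrac{\tanh}{\sqrt{2}}(2-3\sech^2)\sin(\sqrt{2}t) = -\sqrt{2}HL_4$, and on the other $\partial_t L_4-\partial_x M_4 = -\sqrt{2}\tanh^2\cos(\sqrt{2}t) = -\sqrt{2}HM_4$, closing the verification.
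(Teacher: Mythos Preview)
Your proposal is correct and follows essentially the same approach as the paper: direct differentiation and elimination of mixed derivatives for item (1), the identity $H''=-\sqrt{2}HH'$ for item (2), and direct substitution for items (4) and (5), with all your arithmetic checking out. The only minor difference is in item (3), where the paper simply invokes ``standard Sturm--Liouville theory, and the fact that $Y_0$ has a sign and it is even,'' whereas you give a more explicit P\"oschl--Teller reduction; your version is slightly more self-contained but not a genuinely different route.
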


\begin{rem}
Similar conclusions in items (4) and (5) above are obtained if the periodic functions in time are correctly changed: in \eqref{L_M_3} $(\phi, \varphi) = \Big(Y_1(x) \cos (t\sqrt{3/2}), -Y_0(x) \sin (t\sqrt{3/2}) \Big)$ is also solution of \eqref{LBT_p4}, and instead of \eqref{L_M_4},
\[
\widetilde L_4:=-\left(1-\frac32\sech^2\left(\frac{x}{\sqrt{2}}\right) \right) \cos(\sqrt{2}t), \qquad \widetilde M_4:=  -\tanh \left(\frac{x}{\sqrt{2}}\right) \sin(\sqrt{2}t).
\]
are also solutions of \eqref{LBT_p4}. This is consequence of the fact that time derivatives of solutions also solve \eqref{LBT_p4}.
\end{rem}

\begin{rem}
Note that $L_4$ already appeared in this paper in \eqref{Reso_phi4}. Also, in item 4, $M_4$ is called ``resonance around the internal mode'' because it is exactly a resonance of $\widetilde{\mathcal L}_H$ in \eqref{tildeL}, which can be regarded as the linear operator for which the internal mode $Y_1$ in \eqref{Y_1} is its generator, in the sense of \cite{KMM3}.
\end{rem}
 
\begin{rem}
Note that $\mathcal L_{H}$ and $\widetilde{\mathcal L}_{H}$ correspond, in the terminology of Schr\"odinger operators, to a linear operator and its dual, respectively, see e.g. \cite{KMM3} and references therein.
\end{rem}

\begin{proof}[Proof of Lemma \ref{LBT_p4_1}]
The proof follows from straightforward computations. Nevertheless, for the sake of completeness we shall show them below. In fact, by 
differentiating both equations in \eqref{LBT_p4} with respect to space and time respectively we obtain \[
\partial_{tt}\phi-\partial_{tx}\varphi=-\sqrt{2}H\varphi_t \quad \hbox{and} \quad 
\partial_{xx}\phi-\partial_{tx}\varphi=-\sqrt{2}H'\phi-\sqrt{2}H\partial_x\phi.
\]
Thus, by replacing one equation into the other we conclude
\[
\partial_{tt}\phi-\partial_{xx}\phi=(1-H^2)\phi-\sqrt{2}H(\partial_t\varphi-\partial_x\phi)=(1-3H^2)\phi.
\]
In the same way, deriving both equations in \eqref{LBT_p4} with respect to time and space respectively we conclude
\[
\partial_{tt}\varphi-\partial_{xx}\varphi=-\sqrt{2}H'\varphi+\sqrt{2}H(\partial_t\phi-\partial_x\varphi)=-(1+H^2)\varphi.
\]
This proves the first point.

\medskip

Now, recalling that $H$ satisfies the equation $H'=(1-H^2)/\sqrt{2}$, we immediately conclude
\[
H''=\dfrac{1}{\sqrt{2}}(1-H^2)'=-\sqrt{2}HH',
\]
and hence $(H',0)$ solves \eqref{LBT_p4}. 

\medskip

The third point is consequence of standard Sturm-Liouville theory, and the fact that $Y_0$ has a sign and it is even.

\medskip

Now we intend to prove point $(4)$. In fact, let us start by some computations. By differentiating directly in the definition of $(\phi,\varphi)$ we obtain \[
\partial_x\phi= -\dfrac{1}{\sqrt{2}}\left(1-2\sech^2\left(\dfrac{x}{\sqrt{2}}\right)\right)\sech\left(\dfrac{x}{\sqrt{2}}\right)\tanh^2\left(\dfrac{x}{\sqrt{2}}\right)\sin\left(\dfrac{\sqrt{3}t}{\sqrt{2}}\right),
\]
and
\[
\partial_x\varphi=\dfrac{1}{\sqrt{6}}\sech\left(\dfrac{x}{\sqrt{2}}\right)\tanh\left(\dfrac{x}{\sqrt{2}}\right)\cos\left(\dfrac{\sqrt{3}t}{\sqrt{2}}\right).
\]
Thus, by replacing these formulas into the system \eqref{LBT_p4} we obtain 
\begin{align*}
\partial_x\phi-\partial_t\varphi& =-\sqrt{2}\left(1-\sech^2\left(\dfrac{x}{\sqrt{2}}\right)\right)\sech\left(\dfrac{x}{\sqrt{2}}\right)\sin\left(\dfrac{\sqrt{3}t}{\sqrt{2}}\right)
\\ & = -\sqrt{2}H\sech\left(\dfrac{x}{\sqrt{2}}\right)\tanh\left(\dfrac{x}{\sqrt{2}}\right)\sin\left(\dfrac{\sqrt{3}t}{\sqrt{2}}\right)=-\sqrt{2}H\phi,
\end{align*}
and
\[
\partial_t\phi-\partial_x\varphi=\left(\sqrt{\dfrac{3}{2}}-\dfrac{1}{\sqrt{6}}\right)\sech\left( \dfrac{x}{\sqrt{2}}\right)\tanh\left( \dfrac{x}{\sqrt{2}}\right)\cos\left(\dfrac{\sqrt{3}t}{\sqrt{2}}\right)=\sqrt{2}H\varphi,
\]
what finish the proof of the fourth statement. Finally, by differentiating \eqref{L_M_4} we obtain 
\[
L_{4,x}=-\dfrac{3}{\sqrt{2}}\sech^2\left(\dfrac{x}{\sqrt{2}}\right)\tanh\left(\dfrac{x}{\sqrt{2}}\right)\sin(\sqrt{2}t)
\]
and
\[
 M_{4,x}:=\dfrac{1}{\sqrt{2}}\sech^2\left(\dfrac{x}{\sqrt{2}}\right)\cos(\sqrt{2}t).
\]
Therefore, by replacing these formulas into the left-hand side of the first equation in \eqref{LBT_p4} we obtain
\begin{align*}
L_{4,x}-M_{4,t}&=-\dfrac{3}{\sqrt{2}}\sech^2\left(\dfrac{x}{\sqrt{2}}\right)\tanh\left(\dfrac{x}{\sqrt{2}}\right)\sin(\sqrt{2}t)+\sqrt{2}\tanh\left(\dfrac{x}{\sqrt{2}}\right)\sin(\sqrt{2}t)
\\ & = \sqrt{2}\left(1-\dfrac{3}{2}\sech^2\left(\dfrac{x}{\sqrt{2}}\right)\right)\tanh\left(\dfrac{x}{\sqrt{2}}\right)\sin(\sqrt{2}t),
\end{align*}
what proves that $(L_4,M_4)$ satisfy the first equation in \eqref{LBT_p4}. On the other hand, by replacing these formulas into the left-hand side of the second equation in \eqref{LBT_p4} we obtain \begin{align*}
L_{4,t}-M_{4,x}&=-\sqrt{2}\left(1-\dfrac{3}{2}\sech^2\left(\dfrac{x}{\sqrt{2}}\right)\right)\cos(\sqrt{2}t)-\dfrac{1}{\sqrt{2}}\sech^2\left(\dfrac{x}{\sqrt{2}}\right)\cos(\sqrt{2}t)
\\ & =-\sqrt{2}\left(1-\sech^2\left(\dfrac{x}{\sqrt{2}}\right)\right)\cos(\sqrt{2}t)=-\sqrt{2}HM_4.
\end{align*}
The proof is complete. 
\end{proof}

It turns out that the resonance $M_4$ in \eqref{L_M_4} plays the role of $L$ in \eqref{L_M}, and it is also related via LBT to a resonance of the zero solution of linear Klein-Gordon, as in Lemma \ref{LBT_SG}, with a slight modification coming from the eigenvalue 3/2. What we prove now is in some sense similar to the factorization discussed in
\cite{KMM3} and references therein: we can also connect $M_4$ with the even resonance associated to the vacuum state (the equivalent to $M$ in \eqref{L_M}). For simplicity, we work with complex valued data. Let $\la_0:= i \sqrt{3/2},$ and consider the following LBT: 
\be\label{LBT_2}
\begin{cases}
\partial_x \widetilde\varphi -\partial_t\widetilde \psi =-\frac1{\sqrt{2}} H\widetilde \varphi  \mp \la_0 \widetilde \psi \\
\partial_t \widetilde\varphi-\partial_x \widetilde\psi= -\frac1{\sqrt{2}} H \widetilde\psi  \mp \la_0 \widetilde \varphi.
\end{cases}
\ee
Note that we have two LBT depending on the sign $\pm$ on the right, but both are essentially the same. The following second LBT result for $\phi^4$ follows: 
\begin{lem}[LBT in the $\phi^4$ case, second part]\label{LBT_p4_2}
One has that
\ben
\item If $(\widetilde\varphi,\widetilde\psi)$ solves \eqref{LBT_2}, then 
\be\label{Aux_00}
\widetilde \varphi_{tt}  + \widetilde{\mathcal L}_{H} \widetilde \varphi  =0  \quad \hbox{ and } \quad  \widetilde \psi_{tt}  -\widetilde \psi_{xx} +2\widetilde\psi =0.
\ee
Once again, the converse is not necessarily true. 
\item Let
\be\label{L_M_4_3}
M_4:=  \tanh \left(\frac{x}{\sqrt{2}}\right) e^{i\sqrt{2}t},\qquad N_4:= (-2i\mp \sqrt{2} \la_0)e^{i\sqrt{2}t}.
\ee
Then $(M_4,N_4)$ satisfies \eqref{LBT_2}.
\een
\end{lem}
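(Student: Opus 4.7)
The plan for part (1) is to mimic the scheme used in the proof of Lemma \ref{LBT_p4_1}(1): differentiate the first equation of \eqref{LBT_2} in $x$ and the second in $t$, producing mixed derivatives $\partial_{xt}\widetilde\psi$ that cancel upon subtraction; then reinsert the original equations to eliminate the remaining first-order terms. Using $H'=(1-H^2)/\sqrt{2}$, the terms of the form $\pm\lambda_0 H/\sqrt{2}$ (which pair $\widetilde\varphi$ with $\widetilde\psi$) cancel, and the $-\lambda_0^2$ contribution combines with $H^2/2$ and $H'/\sqrt{2}$ to collapse to $-(1+H^2)\widetilde\varphi$ on the right, which rearranges exactly as $\partial_{tt}\widetilde\varphi+\widetilde{\mathcal L}_H\widetilde\varphi=0$. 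Exchanging the order of differentiation (first equation in $t$, second in $x$), the same cancellation mechanism applies: the $H^2$ terms cancel thanks to $H'/\sqrt{2}=(1-H^2)/2$, and what remains is the constant mass $\tfrac{1}{2}-\lambda_0^2=\tfrac{1}{2}+\tfrac{3}{2}=2$, yielding $\partial_{tt}\widetilde\psi-\partial_{xx}\widetilde\psi+2\widetilde\psi=0$.

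For part (2), the verification is by direct substitution. A decisive simplification is that $N_4$ depends only on $t$, so $\partial_x N_4\equiv 0$. Computing $\partial_t M_4=i\sqrt{2}\,M_4$, the second equation of \eqref{LBT_2} reduces to the purely algebraic scalar identity
\[
i\sqrt{2}=-\tfrac{1}{\sqrt{2}}(-2i\mp\sqrt{2}\lambda_0)\mp\lambda_0,
\]
which holds because the two $\mp\lambda_0$ contributions cancel; this is precisely the reason for choosing the coefficient $-2i\mp\sqrt{2}\lambda_0$ in the definition of $N_4$. For the first equation, one computes $\partial_x M_4=\tfrac{1}{\sqrt{2}}\sech^2(x/\sqrt{2})\,e^{i\sqrt{2}t}$ and $\partial_t N_4=i\sqrt{2}\,N_4$, and notes that $-\tfrac{H}{\sqrt{2}}M_4=-\tfrac{1}{\sqrt{2}}\tanh^2(x/\sqrt{2})\,e^{i\sqrt{2}t}$. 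The identity $\tanh^2=1-\sech^2$ collapses the $\sech^2$ terms on both sides, and what remains is a constant identity that reduces, via $\lambda_0^2=-3/2$ (equivalently $\sqrt{2}\lambda_0^2=-3/\sqrt{2}$), to $-2\sqrt{2}=-4/\sqrt{2}$.

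The only real obstacle is the bookkeeping of the $\mp$ sign: the sign selected in \eqref{LBT_2} must be tracked consistently with the one appearing in the definition of $N_4$ in \eqref{L_M_4_3}. I would carry out the computation either by keeping $\pm$ explicit throughout or by treating the two sign choices as independent verifications, checking in each case that the cross-cancellations between the $H\lambda_0$ terms in part (1) and between the $\lambda_0$ terms in part (2) genuinely go through. Beyond that, the whole lemma is a routine differentiation plus algebra; no analytic estimates are required.
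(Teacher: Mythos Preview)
Your proposal is correct and follows essentially the same approach as the paper: for part (1) you differentiate the two equations of \eqref{LBT_2} in the appropriate variables, subtract to cancel the mixed derivative, and reinsert the original system so that the $\mp\tfrac{\lambda_0}{\sqrt 2}H$ cross-terms cancel and the remaining constants combine via $\lambda_0^2=-3/2$; for part (2) you verify by direct substitution using $\partial_x N_4=0$, $\tanh^2=1-\sech^2$, and the algebraic identity forced by the coefficient $-2i\mp\sqrt 2\lambda_0$. This is exactly what the paper does, line for line.
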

%
%

\begin{proof}
Just differentiating both equations in \eqref{LBT_2} with respect to space and time respectively we obtain 
\[
\partial_{xx} \widetilde\varphi -\partial_{tx}\widetilde \psi =-\frac1{\sqrt{2}} H'\widetilde \varphi -\frac1{\sqrt{2}} H\widetilde \varphi_x  \mp \la_0 \widetilde \psi_x , \quad \hbox{and} \quad
\partial_{tt} \widetilde\varphi-\partial_{tx} \widetilde\psi= -\frac1{\sqrt{2}} H \widetilde\psi_t  \mp \la_0 \widetilde \varphi_t.
\]
Thus, by replacing one equation into the other we conclude
\[\begin{aligned}
&\partial_{tt}\widetilde\varphi-\partial_{xx}\widetilde\varphi
=-\frac{1}{\sqrt{2}}H(\widetilde\psi_t - \widetilde\varphi_x) + \frac{1}{\sqrt{2}}H'\widetilde\varphi\mp\lambda_0(\widetilde\varphi_t-\widetilde\psi_x)\\
&=-\frac{1}{2}H^2\widetilde\varphi \mp\frac{\lambda_0}{\sqrt{2}}H\widetilde\psi + \frac{1}{2}(1-H^2)\widetilde\varphi
\pm\frac{\lambda_0}{\sqrt{2}}H\widetilde\psi+\lambda_0^2\widetilde\varphi\\
&=-(1+H^2)\widetilde\varphi.
\end{aligned}
\]
\noindent
Namely, we get 
\[
 \widetilde \varphi_{tt}  + \widetilde{\mathcal L}_{H} \widetilde \varphi  =0.
\]
In the same way, deriving both equations in \eqref{LBT_2} with respect to time and space respectively we conclude
\[
\partial_{tt}\widetilde\psi-\partial_{xx}\widetilde\psi=-\frac{1}{\sqrt{2}}H'\widetilde\psi+\frac{1}{\sqrt{2}}H(\partial_t\widetilde\varphi-\partial_x\widetilde\psi)
\pm\lambda_0(\widetilde\psi_t-\widetilde\varphi_x)=-2\widetilde\psi.
\]
Namely, we get 
\[
 \widetilde \psi_{tt} -\partial_{xx}\widetilde\psi  + 2\widetilde\psi  =0.
\]
This ends the proof of the first point. 

\medskip

Finally, by differentiating \eqref{LBT_2} we obtain
\[\begin{aligned}
&M_{4x}=\frac{1}{\sqrt{2}}\sech^2\left(\frac{x}{\sqrt{2}}\right)e^{i\sqrt{2}t},\quad N_{4x}=0,\\
&M_{4t}=i\sqrt{2} \tanh \left(\frac{x}{\sqrt{2}}\right)e^{i\sqrt{2}t},\quad N_{4t}=i\sqrt{2}(-2i\mp\sqrt{2}\lambda_0)e^{i\sqrt{2}t}.
\end{aligned}
\]
Now, subtracting we get
\[\begin{aligned}
& M_{4x} - N_{4t} = \frac{1}{\sqrt{2}}\sech^2\left(\frac{x}{\sqrt{2}}\right)e^{i\sqrt{2}t} - i\sqrt{2}(2\pm\sqrt{2}\lambda_0)e^{i\sqrt{2}t}\\
& = -\frac{1}{\sqrt{2}} \tanh^2\left(\frac{x}{\sqrt{2}}\right)e^{i\sqrt{2}t} + \left( \frac{1}{\sqrt{2}} - 2\sqrt{2} \pm2i\lambda_0 \right) e^{i\sqrt{2}t}\\
& = -\frac{1}{\sqrt{2}}HM_4 + \left( \sqrt{2}(i\sqrt{\frac{3}{2}})^2 \pm2i\lambda_0 \right)e^{i\sqrt{2}t}
 = -\frac{1}{\sqrt{2}}HM_4 + \lambda_0 \left( \sqrt{2}\lambda_0 \pm2i \right)e^{i\sqrt{2}t}\\
& = -\frac{1}{\sqrt{2}}HM_4 \mp \lambda_0[-2i\mp\sqrt{2}\lambda_0]e^{i\sqrt{2}t}=-\frac{1}{\sqrt{2}}HM_4\mp\lambda_0N_4.\\
\end{aligned}
\]
Similarly, we also get
\[\begin{aligned}
 M_{4t} - N_{4x} &= i\sqrt{2} \tanh \left(\frac{x}{\sqrt{2}}\right)e^{i\sqrt{2}t}
 = -\frac{1}{\sqrt{2}}H[-2i\mp\sqrt{2}\lambda_0]e^{i\sqrt{2}t} \mp \lambda_0 \tanh \left(\frac{x}{\sqrt{2}}\right)e^{i\sqrt{2}t}\\
& = -\frac{1}{\sqrt{2}}HN_4 \mp \lambda_0M_4.
\end{aligned}
\]
This last fact ends the proof of the second point.
\end{proof}

\begin{rem}
In terms of the results in \cite{KMM3}, understanding the AS of the $\phi^4$ under general data is related to the understanding of the corresponding AS around zero of the equation
\[
\widetilde \psi_{tt}  -\widetilde \psi_{xx} +2\widetilde\psi  + N(\widetilde \psi)=0,
\]
for some $N(\widetilde \psi)$ determined after two consecutive reductions of $\phi^4$ using Lemmas \ref{LBT_p4_1} and \ref{LBT_p4_2}. This equation has the simplified $N_4$ in \eqref{L_M_4_3} as resonance, which is far simpler that $L_4$ in \eqref{L_M_4}.  
\end{rem}

\medskip

\section{Asymptotic stability manifolds for the SG kink: Theorem \ref{MT00} revisited}\label{6}

Now we have all the ingredients to fully understand Theorem \ref{MT00}: Theorem \ref{WK_orbital0}, the whole Section \ref{5}, and particularly Figure \ref{Fig:2}.

\medskip

Recall the manifold $\mathcal {OE}_Q$ in \eqref{O_Q}. In this section, our goal is precisely to construct a smooth manifold of initial data of the form 
\[
\vec\phi(t=0)=(Q,0) + \hbox{(odd, even)} \in \mathcal {OE}_Q,
\]
perturbations of $(Q,0)$, for which a final state is attained. Unlike the (odd, odd) configuration $\mathcal O_Q$ discussed in \eqref{O_Q}, in our present case the initial parities shall not be preserved by the flow. Recall the definitions of $H^m_e(\R)$ and $H^m_o(\R)$ given in \eqref{Espacios_paridad}. Theorem \ref{MT00} will follow from the following \emph{static} asymptotic stability manifold for the SG kink:
%
%
%
%
%

\begin{thm}[Zero momentum manifold for asymptotic stability in SG]\label{MT0} There exists $\eta_0>0$ such that, for all $0<\eta<\eta_0$, the following holds. 
There exists a smooth manifold $\mathcal M_{\eta,0}$ (of infinite codimension) of initial data $(\phi_0,\phi_1)$ of the form 
\be\label{initial_data}
 (\phi_0,\phi_1)(x) = (Q(x) + \widetilde u_0(x), \widetilde s_0(x)), \qquad \Vert (\widetilde u_0,\widetilde s_0) \Vert_{H^1\times L^2}< \eta,
\ee
 where $\widetilde u_0\in H^1_o(\R),$ $\widetilde s_0\in L^2_e(\R)$, and satisfying the following properties. Let $(\phi,\phi_t)(t)$ be the global solution of \eqref{sg1} with initial data $ (\phi_0,\phi_1)$. Then,
 
\ben
\item
 $(\phi,\phi_t)(t)$ has zero momentum: $P[(\phi,\phi_t)]=0$.
 \medskip
 \item There exists a smooth $\rho(t)\in\R$ satisfying
 \be\label{eta2}
 \sup_{t\in\R}| \rho'(t)| \lesssim \eta^2,
 \ee 
such that, for any sufficiently large bounded interval $I\subset\R$, the following alternative holds:
\ben 
\item[(a)] there exists a sequence $t_n\to \pm\infty$ such that $|\rho(t_n) |\to +\infty$ and
\begin{align}\label{final_data0_0}
\lim_{n\to\pm\infty}\Vert (\phi,\phi_t)(t_n)-(Q,0)(\cdot-\rho(t_n))\Vert_{(H^1\times L^2)(I)}=0;
\end{align} 
\item[(b)] $\rho(t)$ stays bounded for all $t\in\R$ and
\begin{align}\label{final_data0_1}
\lim_{t\to\pm\infty}\Vert (\phi,\phi_t)(t)-(Q,0)(\cdot-\rho(t))\Vert_{(H^1\times L^2)(I)}=0.
\end{align} 
Moreover, $\rho(t)\to \bar\rho \in\R$ in this case.
 \een
 \medskip
\item The manifold  $\mathcal M_{\eta,0}$ defining \eqref{initial_data} is characterized as the image, under a B\"acklund transformation and the Implicit Function Theorem, of initial perturbations $(y_0,v_0)\in H_o^1\times L^2_e$ of the zero solution which satisfy $v_0=0$ and are connected to \eqref{initial_data} in such a way that they preserve their total zero momentum. 
\een 
\end{thm}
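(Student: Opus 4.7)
The plan is to invert the logic of Proposition \ref{THMQzeroA}: rather than lifting a vacuum perturbation to a kink perturbation once and reading off properties, we use the BT as a conjugation intertwining the (complicated, non parity-preserving) dynamics around $Q$ with the (well-controlled, odd-preserving) dynamics around zero, and then import the local decay of Theorem \ref{thmkmm2} back to the kink side. The parametrization is $(y_0,v_0)=(y_0,0)$ with $y_0\in H^1_o$ and $\|y_0\|_{H^1}\lesssim \eta$.

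\textbf{Construction of $\mathcal M_{\eta,0}$ and zero momentum.} Applying the Implicit Function Theorem to the B\"acklund functional $\widetilde{\mathcal F}$ with $\beta=0$, $a=1$, $x_0=0$, $\delta=0$, as in Lemma \ref{Parity}(c)--(d) and following the argument of Proposition \ref{THMQzeroA}, I would obtain a unique small $(\widetilde u_0,\widetilde s_0)\in H^1_o\times L^2_e$ such that $\mathbb{B}(y_0,0)\xrightarrow{\ 1\ }(Q+\widetilde u_0,\widetilde s_0)$. The set of such data defines $\mathcal M_{\eta,0}$, establishing property~(3). The BT preserves the total momentum $P$ (direct computation from \eqref{b1}--\eqref{b2}), and $P[(y_0,0)]=0$ trivially by oddness of $y_0$, so $P[(Q+\widetilde u_0,\widetilde s_0)]=0$; conservation of $P$ along the SG flow gives property~(1).

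\textbf{Parallel evolutions, modulation, and BT descent.} I would run two SG evolutions simultaneously: $(y,y_t)(t)$ starting from $(y_0,0)$ on the zero side, and $(\phi,\phi_t)(t)$ starting from $(Q+\widetilde u_0,\widetilde s_0)$ on the kink side. Since $\sin$ is odd, $(y,y_t)(t)\in H^1_o\times L^2_o$ for all $t$, and a small-data energy argument keeps $\|(y,y_t)(t)\|_{H^1\times L^2}\lesssim \eta$ uniformly. On the kink side, orbital stability \eqref{Orbital} produces a continuous $y(t)$ keeping the solution $\eta$-close to $(Q,0)(\cdot-y(t))$; I would refine it into a $C^1$ modulation parameter $\rho(t)$ by imposing an orthogonality condition on $w(t,\cdot):=\phi(t,\cdot+\rho(t))-Q(\cdot)$ against $Q_x$. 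The resulting modulation equation, together with $\|w(t)\|_{H^1\times L^2}\lesssim \eta$, yields $|\rho'(t)|\lesssim \eta^2$, proving \eqref{eta2}. For each $t$, the BT with $a=1$ then matches $(y,y_t)(t,\cdot)$ to $(\phi,\phi_t)(t,\cdot+\rho(t))$: this holds at $t=0$ by construction, propagates in time since the BT maps SG solutions to SG solutions (Lemma \ref{Solutions_Backlund}), and is unique in the small-data regime (in the spirit of Proposition \ref{THMQzeroB}).

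\textbf{Transfer of decay, dichotomy and the main obstacle.} The uniform smallness and odd parity of $(y,y_t)(t)$ place us within the hypotheses of Theorem \ref{thmkmm2}, so $\|(y,y_t)(t)\|_{(H^1\times L^2)(I)}\to 0$ as $t\to\pm\infty$ for any compact $I$, together with the weighted estimate \eqref{Integration0}. Inverting the BT locally (a Lipschitz map $H^1_{\mathrm{loc}}\times L^2_{\mathrm{loc}}\to H^1_{\mathrm{loc}}\times L^2_{\mathrm{loc}}$ at small data) transfers this decay to $\|(\phi,\phi_t)(t)-(Q,0)(\cdot-\rho(t))\|_{(H^1\times L^2)(I)}\to 0$ on any compact $I$. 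For the dichotomy in property~(2): if $\rho$ is unbounded, I extract a subsequence $t_n$ with $|\rho(t_n)|\to\infty$, yielding \eqref{final_data0_0}; otherwise, \eqref{Integration0} combined with the modulation equation forces $\rho'\in L^1$ and gives $\rho(t)\to\bar\rho\in\R$, hence \eqref{final_data0_1}. The hardest part is precisely this transfer of local decay through the BT uniformly in the possibly drifting shift $\rho(t)$: the BT is nonlinear, its quantitative inversion requires smallness, and one must control local norms on the kink side by the weighted integral \eqref{Integration0} on the zero side without loss as $|\rho(t)|\to\infty$; a secondary bookkeeping obstacle is that the instantaneous BT descent at time $t$ must coincide with the unique globally evolved $(y,y_t)(t)$, which is where the parity framework and the absence of spurious kernels in the linearized BT play the decisive role.
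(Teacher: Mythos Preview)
Your outline matches the paper's architecture (Sections \ref{7}--\ref{11}): construct $\mathcal M_{\eta,0}$ by the BT and IFT, modulate around the shifted kink, propagate the BT in time so that the kink-side perturbation $(\widetilde u,\widetilde s)$ is the lift of the evolved odd zero-side solution $(y,v)$, and then pull Theorem \ref{thmkmm2} back. One small correction: the BT does \emph{not} preserve momentum in general; by Lemma \ref{Mom_Backlund} it shifts $P$ by $2(\tfrac{1}{a}-a)$, which happens to vanish at $a=1$, so your conclusion is right but the reason is specific to $\delta=0$.

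The substantive gap is your sentence ``inverting the BT locally (a Lipschitz map $H^1_{\mathrm{loc}}\times L^2_{\mathrm{loc}}\to H^1_{\mathrm{loc}}\times L^2_{\mathrm{loc}}$)''. The map $(y,v)\mapsto(\widetilde u,\widetilde s)$ is \emph{not} local: $\widetilde u$ comes from a first-order ODE whose integrating factor involves the shifted kink, and $\widetilde u$ on a compact $I$ depends on data on a full half-line. The paper (Section \ref{11}, Step 4, Subcase 1) instead writes $\widetilde u(t,x)=-\int_x^\infty e^{\int_x^s V}\widetilde F(t,s)\,ds$ and uses the exponentially decaying kernel together with Minkowski and dominated convergence to get $\|\widetilde u(t_n)\|_{L^2(I)}\le \int_0^\infty e^{-c_0 w}\|\widetilde F(t_n)\|_{L^2(I+w)}\,dw\to 0$; this is also why the unbounded-$\rho$ alternative only yields convergence along a subsequence. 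Similarly, your claim that ``\eqref{Integration0} combined with the modulation equation forces $\rho'\in L^1$'' skips Section \ref{10}: the raw modulation identity gives $\rho'$ \emph{linearly} in $\widetilde s$, and one needs the cancellation \eqref{milagro} together with Lemmas \ref{quadratic_rho_prop}--\ref{10p3} to upgrade this to the purely zero-side quadratic bound $|\rho'|\lesssim \int e^{-(1-\varepsilon)|x-\rho(t)|}(y^2+y_x^2+v^2)\,dx$. Only then, and only under the boundedness of $\rho$ so that the moving weight is comparable to $e^{-c|x|}$, does \eqref{Integration0} yield $\rho'\in L^1$ and $\rho(t)\to\bar\rho$.
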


Some remarks are certainly necessary.

\begin{rem}
Note that in Theorem \ref{MT0} we do not specify the space where $(\phi,\phi_t)$ are posed, this because $(Q,0)(t)$ in \eqref{Kink} does not belong to $H^1\times L^2$. However, it is possible to show local and global well-posedness (LWP), such that $H^1\times L^2$ perturbations are naturally allowed \cite{DG}. 
\end{rem}

\begin{rem}[Explicit examples]
Note that the exact solution to SG (see \eqref{Kink_lorentz} for the notation)
\be\label{exact_solution}
(\phi,\partial_t \phi)(t,x) =(Q(t,x;\beta,0), \partial_t Q(t,x;\bt,0)),
\ee
has nonzero momentum, provided $\beta\neq 0$. These data are not included in Theorem \ref{MT0} precisely because of this nonzero momentum property. Indeed, with the terminology proposed above, for any $\beta$ small enough, one has the initial data $ (\phi_0,\phi_1)(x) = (Q(x) + \widetilde u_0(x), \widetilde s_0(x))$, with $\widetilde u_0(x):=Q(0,x;\bt,0)-Q(0,x;0,0) $ and $\widetilde s_0(x):=\partial_t Q(t,x;\bt,0)\Big|_{t=0} $ (see \eqref{Kink_lorentz} for definitions). Moreover, the evolution in this case is given by the exact solution \eqref{exact_solution}. However, using a Lorentz transformation \eqref{Lorentz}, it is possible to reduce the problem of data as in \eqref{exact_solution} to data of zero momentum, by changing the initial time (and also the data) considered at the new initial time.
\end{rem}

\begin{rem}[About the manifold $\mathcal M_{\eta,0}$]
Note that, unlike in the (odd, odd) case, data of the form \eqref{initial_data} is
not preserved by the SG equation. The clearest example is probably stated in the previous remark. The fact that $M_{\eta,0}$ has \emph{infinite codimension} should not be a surprise: by Theorem \ref{WK_orbital} a big part of the (kink + odd, odd) manifold does not satisfy the asymptotic stability property, or in other works, asymptotic stability manifolds are maybe far from being finite codimensional. Finally, recall that small shifts of kinks as initial data are not contained in the manifold $\mathcal M_{\eta,0}$, because they break the parity assumptions. 
\end{rem}

\begin{rem}
It is worth noticing that, for any non-zero $\beta\in (-1,1)$ and any non-zero $v\in (-1,1)$, the wobbling kink $W_{\beta,v}$ in \eqref{QB} does not belong to $\mathcal{M}_{\eta,0}$. This is a consequence of the fact that for such an election of $\beta$ and $v$, the wobbling kink $W_{\beta,v}$ has non zero momentum.
\end{rem}

\begin{rem}
Theorem \ref{MT0} is in some sense sharp: if now $\widetilde s_0$ is general (e.g. odd), the convergence does not hold. 
Also, it seems to be the first asymptotic stability result  in one dimension valid for perturbations of kinks \emph{in the energy space} that lead to modifications in the shifts (that is to say, the parity symmetry is not preserved by the flow). We remark that Kopylova and Komech \cite{KK2} also considered general perturbations in weighted Sobolev spaces of kink solutions for field theories with sufficiently flat nonlinearities, which do not contain SG nor $\phi^4$.
\end{rem}

\begin{rem}[About the shift $\rho(t)$]
It was not possible for us to show that, with data only in the energy space, $\rho(t)$ always converge to a final state. Indeed, because of the proof that we invoke, this fact is deeply related to the AS of the zero solution along moving in time space-intervals, a property that it is not known for odd data (note that in general this property fails to be true: any small moving breather contradicts that property). See Remarks \ref{convergencia_1} and \ref{convergencia_2} for full details. However, even in the case where $\rho(t)$ diverges along a subsequence, there is a sort of AS around any compact spatial interval. We conjecture that under some additional condition on the initial data, $\rho(t)$ is always convergent to a final state. See \cite{KK2} for similar results. 
\end{rem}

\begin{rem}[On the literature]
The use of suitable choices of manifolds of initial data is not a new tool in analyzing stability issues in nonlinear models. Remember for instance the 
use of a $C^1$ center-stable manifold around the soliton solution of the NLKG equation \cite{KNS}. Besides
that, basic definitions of stable, unstable and center-stable manifolds can be found in \cite{BJ} and \cite{NS1}. In critical gKdV equations, smooth manifolds around the unstable soliton are constructed in \cite{MaMeRa,MaMeNaRa}. See also \cite{KMM3} for a very recent construction of the stable manifold leading to decay in monic subcritical NLKG equations.
\end{rem}

The rest of this paper (Sections \ref{7}-\ref{11}) is devoted to the proof of Theorem \ref{MT0}, which is essentially divided in five parts:
\ben
\item Section \ref{7}: construction of the manifold of initial data $\mathcal M_{\eta,0}$.
\item Section \ref{8}: modulation of the data around the kink.
\item Section \ref{9}:  using BT, lifting of the data from (odd, odd) perturbations around the zero solution.
\item Section \ref{10}: improved estimates on the shift parameters.
\item Section \ref{11}: end of proof, essentially proving \eqref{final_data0_0} and \eqref{final_data0_1}.
\een
 In next section, we will construct the manifold of initial data $\mathcal M_{\eta,0}$, proving part (3) of Theorem \ref{MT0}.

\begin{rem}[About the initial data]
In what follows, and in order to avoid misunderstandings in the notation, we shall denote (see \eqref{Q}-\eqref{Qt})
\be\label{Q0}
(Q^0, Q_t^0):=(Q,Q_t)(x;\beta=0,x_0=0), \qquad  (\widetilde Q^0, \widetilde Q_t^0):=(\widetilde Q,\widetilde Q_t)(x;\beta=0,x_0=0).
\ee
These functions are nothing but the background initial data ``$(Q,0)$'' stated in Theorem \ref{MT0}, written this time in terms of kink profiles.
\end{rem}

\medskip

\section{Proof of Theorem \ref{MT0}: construction of the manifold of initial data}\label{7}

In this Section we will construct the initial data \eqref{initial_data}. In order to prove this, we will solve the BT functionals \eqref{b1}-\eqref{b2} (more precisely, \eqref{f1_new}-\eqref{f2_new}) in the opposite sense to the one performed in \cite{MP}; that is to say, given any initial data near zero of (odd, odd) type, we will show the existence of (odd, even) type perturbation data around the kink.

\medskip

The idea is to use the Implicit Function Theorem, choosing $(a,\phi_0,\phi_1)$ around $(1,0,0)$ in \eqref{b1}-\eqref{b2}  and uniquely solving for $(\varphi_0,\varphi_1)=\Phi(a,\phi_0,\phi_1)$ around the kink $(Q,0)$ ($\Phi$ represents the implicit function). Here the properties of the BT play a key role, in the sense that the only possibility for a solution to the previous question, because of strong parity constraints in BT, is to choose the initial perturbation $\phi_1$ on the velocity at time zero exactly equals to zero (more precisely, we need $\phi_1$ odd as above explained, but parity restrictions in BT only permit $\phi_1$ even). Here is when the manifold $\mathcal M_\eta$ in \eqref{Manifold} will rise up ($\eta$ is an artificial smallness parameter, needed to run the Implicit Function Theorem), because that restriction to zero speed reduces the open character of the Implicit Function sets and solutions to the direct image of a graph of the form $\Phi(a,\phi_0,0)$. Finally, the zero momentum manifold $\mathcal M_{\eta,0}$ appearing in Theorem \ref{MT0} is just the image obtained by the set $\Phi(1,\phi_0,0)$, for which the momentum of both data is zero.

\medskip

Recall $\widetilde{\mathcal F}_1$ and $\widetilde{\mathcal F}_2$ in \eqref{f1_new}-\eqref{f2_new}.  In what follows, we will prove that given $(y_0,v_0,\delta)\in H^1_o\times L^2_e\times \R$ sufficiently small, it is possible to uniquely solve the nonlinear system
\be\label{F_new}
\widetilde{\mathcal F}_1(\widetilde u_0,\widetilde s_0,y_0,v_0,\delta) =0,\quad  \widetilde{\mathcal F}_2(\widetilde u_0,\widetilde s_0,y_0,v_0,\delta)=0,
\ee
for $(\widetilde u_0,\widetilde s_0) \in H^1_o\times L^2_e\times \R$ sufficiently small.

\begin{lem}[Construction of initial data]\label{Implicit}
Assume $x_0=0$ as in \eqref{Q0}. There exists $\eta_0>0$ such that, for all $0<\eta<\eta_0$, the following is satisfied.

\ben
\item Given any $(y_0,v_0,\delta)\in H^1_o\times L^2_e\times \R$ such that $\|(y_0,v_0)\|_{H^1\times L^2} +|\delta| <\eta$, there are unique $(\widetilde u_0,\widetilde s_0):=\Phi(y_0,v_0,\delta) \in H^1_o\times L^2_e$ small enough, and such that  \eqref{F_new} are satisfied. 
\smallskip
\item
Moreover, the implicit mapping $\Phi$, defined from $(y_0,v_0,\delta)\in H^1_o\times L^2_e\times \R$ such that $\|(y_0,v_0)\|_{H^1\times L^2} +|\delta|<\eta$ into $H^1_o\times L^2_e $,  is a $C^1$ diffeomorphism in its domain of definition.
\een
\end{lem}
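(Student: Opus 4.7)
The plan is to apply the Implicit Function Theorem to $\widetilde{\mathcal F}=(\widetilde{\mathcal F}_1,\widetilde{\mathcal F}_2)$ in \eqref{f1_new}-\eqref{f2_new}, specialized to $\beta=0$ (so $a=a(0)=1$) and $x_0=0$. By Lemma \ref{Parity}(c) the corresponding map
\[
\widetilde{\mathcal F}:(H^1_o\times L^2_e)\times(H^1_o\times L^2_e\times\R)\longrightarrow L^2_e\times L^2_e,
\]
with dependent variables $(\widetilde u_0,\widetilde s_0)$ and parameters $(y_0,v_0,\delta)$, is $C^1$ near the origin, and Lemma \ref{prkk1} supplies the vanishing $\widetilde{\mathcal F}(0,0;0,0,0)=0$ at the base point. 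Everything thus reduces to proving that the partial Fr\'echet derivative $D_{(\widetilde u_0,\widetilde s_0)}\widetilde{\mathcal F}|_{0}$ is a topological isomorphism between $H^1_o\times L^2_e$ and $L^2_e\times L^2_e$.

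A direct linearization of \eqref{f1_new}-\eqref{f2_new} at the base point, using $\sin(\widetilde Q/2)=\tanh x$ (a consequence of $\widetilde Q=Q-\pi$ together with \eqref{coseno}), produces the block-diagonal operator
\[
D_{(\widetilde u_0,\widetilde s_0)}\widetilde{\mathcal F}\big|_{0}\;=\;\begin{pmatrix} \partial_x+\tanh x & 0 \\ 0 & \mathrm{Id} \end{pmatrix};
\]
the vanishing off-diagonal entries come from the exact cancellation of the two $\cos$-contributions in the $\widetilde u_0$-derivative of $\widetilde{\mathcal F}_2$, and the identity block is manifestly an isomorphism of $L^2_e$. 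The whole matter therefore reduces to proving that the first-order operator
\[
L := \partial_x+\tanh x\ :\ H^1_o \longrightarrow L^2_e
\]
is a bounded bijection.

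The main obstacle is the surjectivity of $L$, since its integrating factor $\cosh x$ is unbounded and naive energy estimates fail to close; injectivity, by contrast, is immediate, as the only homogeneous solution of $Lu=0$ is a multiple of $\sech x$, which is even and hence not in $H^1_o$. For surjectivity, given $f\in L^2_e$ I would take the unique odd antiderivative
\[
u(x) := \sech x \int_0^x \cosh(z)\,f(z)\,dz,
\]
which by construction satisfies $Lu=f$ pointwise. The desired $L^2$-estimate rests on the elementary pointwise inequality $\cosh(z)/\cosh(x)\le 2\,e^{-(x-z)}$ for $0\le z\le x$, which gives $|u(x)|\le 2\int_0^x e^{-(x-z)}|f(z)|\,dz$ on $\R_+$, whence Young's convolution inequality yields $\|u\|_{L^2(\R_+)}\lesssim \|f\|_{L^2(\R_+)}$; oddness of $u$ and evenness of $f$ extend this to $\R$, and the identity $u_x=f-\tanh(x)\,u$ upgrades the bound to $\|u\|_{H^1}\lesssim \|f\|_{L^2}$. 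With this isomorphism established, the Implicit Function Theorem delivers at once the unique implicit mapping $\Phi$ together with its $C^1$-regularity and the local diffeomorphism property near the base point, which is precisely the content of the lemma.
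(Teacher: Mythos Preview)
Your proof is correct and follows essentially the same approach as the paper's: both reduce, via the Implicit Function Theorem and the triangular structure of $\widetilde{\mathcal F}$, to inverting the scalar operator $\partial_x+\tanh x$ from $H^1_o$ to $L^2_e$, and both produce the same explicit odd solution $u(x)=\sech x\int_0^x\cosh(z)f(z)\,dz$ via the integrating factor $\cosh x$. You are in fact more explicit than the paper on the $L^2$ bound, supplying the pointwise estimate $\cosh z/\cosh x\le 2e^{-(x-z)}$ and closing with Young's inequality, whereas the paper simply declares this step direct and refers to \cite{MP}; the paper also carries the parameter $\delta$ through the linearization (yielding the exponent $\nu_0\ge 1$), but at the base point $\delta=0$ this collapses to your $\nu_0=1$.
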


\begin{rem}
Lemma \ref{Implicit} can be understood, in terms of the works \cite{AM1} and \cite{MP}, as a sort of \emph{lifting} of the initial data from the zero background. In those papers, such a property holds only if suitable orthogonality 
conditions are imposed. Otherwise, the derivative mapping $D\widetilde{\mathcal F}$ does not define an homeomorphism. The novelty here is that, whenever we restrict ourselves to the subclass $H^1_o\times L^2_e\times \R$ (namely, we impose a fixed parity), these orthogonality conditions are not needed anymore.

\end{rem}

\begin{proof}[Proof of Lemma \ref{Implicit}]
The proof follows the ideas in \cite{MP} (see also \cite{HW} for the first approach in the SG case), with the main difference being which function will be found in terms of the others. From \eqref{F_new}, \eqref{Q0} and \eqref{f1_new}-\eqref{f2_new}, we are lead to solve the equations 
\begin{align}
 \widetilde{Q}^0_x+\widetilde u_{0,x}-v_0&= \dfrac{1}{1+ \delta}\cos\left(\dfrac{\widetilde{Q}^0+\widetilde u_0+y_0}{2}\right)+ (1+\delta) \cos\left(\dfrac{\widetilde{Q}^0+\widetilde u_0-y_0}{2}\right),\label{perm_k_bajada1}
    \\ \widetilde{Q}^0_t+ \widetilde s_0-y_{0,x}&=\dfrac{1}{1+ \delta}\cos\left(\dfrac{\widetilde{Q}^0+\widetilde u_0+y_0}{2}\right)-(1+\delta)\cos\left(\dfrac{\widetilde{Q}^0+\widetilde u_0-y_0}{2}\right). \label{perm_k_bajada2}
\end{align} 
(Recall that $a=1$ because $\beta=0.$) A simple checking shows that $\widetilde{Q}^0_t=0$, see \eqref{Qt}. Note also that knowing $(y_0,v_0,\delta)$
and $\widetilde u_0$, $\widetilde s_0$ is easily found using the second equation above. 
Therefore, we are only lead to show the existence of uniqueness of $\widetilde u_0$.

\medskip

Thanks to Lemma \ref{Parity}, we are lead to consider the invertibility of the linear operator around $(\widetilde u_0,y_0)=(0,0)$ of $\widetilde{\mathcal F}_1$. Therefore, given $\delta\in\R$ small, we must solve
\be\label{EDO1}
\widetilde u_{0,x}  +\frac12 \left( \frac{1}{(1+ \delta)} + (1+\delta)\right) \sin\left(\frac{\widetilde{Q}^0}{2}\right)\widetilde u_{0}  = f, \qquad \widetilde u_{0}\in H^1_o,
\ee
for any $f \in L^2_e$.  

\medskip

The term $ \sin\left(\frac{\widetilde{Q}^0}{2}\right)$ is odd and easily\footnote{Indeed, 
\[
 \sin\left(\frac{\widetilde{Q}^0}{2}\right) =\sin \left(2 \arctan e^x -\frac\pi2\right) =-\cos \left(2 \arctan e^x \right) =\tanh x.
\]
} computed: $ \sin\left(\frac{\widetilde{Q^0}}{2}\right) =\tanh x$. Consequently, \eqref{EDO1} becomes
\[
\left( \widetilde u_{0}\cosh^{\nu_0} x \right)_x = f \cosh^{\nu_0} x,\qquad \nu_0 := \frac12 \left( \dfrac{1}{(1+ \delta)} + (1+\delta)\right)\geq1.
\]
Hence, since $ \widetilde u_{0} $ must be odd,
\[
\begin{aligned}
 \widetilde u_{0} (x) =&~{}  \widetilde u_{0} (x=0) \cosh^{-\nu_0} x  + \cosh^{-\nu_0} x \int_0^x  f(s) \cosh^{\nu_0} s \, ds \\
 =&~{}  \cosh^{-\nu_0} x \int_0^x  f(s) \cosh^{\nu_0} s \, ds.
\end{aligned} 
\]
Note that, given $f\in L^2_e$,  $\widetilde u_{0}$ is clearly odd. 
The proof that it belongs to  $H^1$ and defines an homeomorphism is direct and can be found in \cite{MP}.
\end{proof}

\medskip

We finally describe the smooth manifold $\mathcal M_\eta$ of initial data $(Q^0,0)+  (\widetilde u_0, \widetilde s_0) \in H^1_o\times L^2_e$ for which Theorem \ref{MT0} is valid. 

\medskip

First of all, recall the parameter $\eta\in (0,\eta_0)$ and the implicit function $\Phi$ obtained in Lemma \ref{Implicit}. We will define
\be\label{Manifold}
\mathcal M_\eta := \Big\{ (Q^0,0) + (\widetilde u_0, \widetilde s_0) ~ :  ~  (\widetilde u_0, \widetilde s_0):=\Phi(y_0, 0,\delta), \quad y_0 \in H^1_o, \quad \|y_0\|_{H^1} +|\delta|<2\eta  \Big\}.
\ee
By definition $\mathcal M_\eta$ is a smooth manifold; it also satisfies some nice properties related to the uniqueness associated to the Implicit Function Theorem.

\begin{lem}[Basic properties of $\mathcal M_\eta$]
Consider the manifold $\mathcal M_\eta$ introduced in \eqref{Manifold}, and recall $a=a(\beta)$ defined in \eqref{a(beta)}. 
Then, the following properties are satisfied:
\ben
\item[(a)] For any $0<\eta<\eta_0$, one has $(Q^0,0)\in \mathcal M_\eta$.
\smallskip
\item[(b)] $\mathcal M_\eta-(Q^0,0) \in H^1_o\times L^2_e$.
\smallskip
\item[(c)] For any $\beta\in (-1,1)$, $\beta\neq 0$ sufficiently small (depending on $\eta_0$ in Lemma \ref{Implicit}), let $(Q,Q_t)(x;\beta,x_0)$ be the kink profile in \eqref{Q}. Then, 
\[
(\widetilde u_0,\widetilde s_0)(x) :=(Q(x;\beta,0)-Q(x;0,0),Q_t(x;\beta,0))
\]
belongs to $\mathcal M_\eta-(Q^0,0)$, with $y_0=0$ and $\delta =a(\beta)-a(0)=a(\beta)-1$. In other words, for $\beta$ sufficiently small,
\[
(Q(\cdot;\beta,0)-Q(\cdot;0,0),Q_t(\cdot;\beta,0)) = \Phi (0,0,a(\beta)-1) \in H^1_o \times L^2_e. 
\]
\een
\end{lem}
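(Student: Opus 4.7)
The proof naturally decomposes along the three items, each being a direct consequence of the uniqueness clause in Lemma~\ref{Implicit} combined with an explicit BT identity already at our disposal.

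For item (a), the plan is to note that by Lemma~\ref{prkk1} with $\beta=0$, the kink profile $(Q^0,0)$ is itself the BT of the vacuum $(0,0)$ with parameter $a(0)=1$. Translating this identity into the $\widetilde{\mathcal F}$-formulation \eqref{f1_new}-\eqref{f2_new} (writing $Q^0=\widetilde Q^0+\pi$ and taking $\delta=0$), we obtain $\widetilde{\mathcal F}_i(0,0,0,0,0)=0$ for $i=1,2$. By the uniqueness assertion of Lemma~\ref{Implicit} applied with $(y_0,v_0,\delta)=(0,0,0)$, this forces $\Phi(0,0,0)=(0,0)$, so $(Q^0,0)=(Q^0,0)+\Phi(0,0,0)\in\mathcal M_\eta$. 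Item (b) is then immediate from the definition \eqref{Manifold} together with the fact that $\Phi$ takes values in $H^1_o\times L^2_e$ by Lemma~\ref{Implicit}.

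The substantive content lies in item (c). My plan has two steps: first verify that the candidate
\[
(\widetilde u_0,\widetilde s_0)(x):=\bigl(Q(x;\beta,0)-Q^0(x),\,Q_t(x;\beta,0)\bigr)
\]
meets the parity conditions required by Lemma~\ref{Implicit}, and then invoke uniqueness. For the parity: since $\widetilde Q(\cdot;\beta,0)$ and $\widetilde Q^0$ are both odd in space by \eqref{tildeQ}, so is their difference, which coincides with $\widetilde u_0$; and from the explicit formula $Q_t(x;\beta,0)=-2\beta\gamma\,\sech(\gamma x)$ in \eqref{Qt}, the component $\widetilde s_0$ is even. Next, Lemma~\ref{prkk1} applied with scaling parameter $\beta$ and shift $x_0=0$ asserts that the pair $\bigl((Q(\cdot;\beta,0),Q_t(\cdot;\beta,0)),\,(0,0)\bigr)$ solves the BT system with parameter $a(\beta)$; after the shift $\delta:=a(\beta)-1$, this is precisely the statement $\widetilde{\mathcal F}_i(\widetilde u_0,\widetilde s_0,0,0,\,a(\beta)-1)=0$ for $i=1,2$.

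To close the argument I only need $\beta$ sufficiently small so that both $\|\widetilde u_0\|_{H^1}+\|\widetilde s_0\|_{L^2}$ and $|\delta|=|a(\beta)-1|$ fall within the smallness range dictated by $\eta_0$ in Lemma~\ref{Implicit}; this follows at once from the explicit expressions of $Q, Q_t$ and the continuity of $\beta\mapsto a(\beta)$ at $0$. The uniqueness clause of Lemma~\ref{Implicit}, applied with data $(y_0,v_0,\delta)=(0,0,a(\beta)-1)\in H^1_o\times L^2_e\times\R$, then yields $\Phi(0,0,a(\beta)-1)=(\widetilde u_0,\widetilde s_0)$, which is exactly the claimed membership in $\mathcal M_\eta-(Q^0,0)$. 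I do not anticipate a real obstacle here; each part is a short argument based on an explicit identity from Section~\ref{3} plus uniqueness. The only mildly delicate point is bookkeeping the two notational conventions ($\mathcal F$ versus $\widetilde{\mathcal F}$) and checking that the candidate data lie in the parity classes of Lemma~\ref{Parity}(c), so that the uniqueness regime of Lemma~\ref{Implicit} indeed applies.
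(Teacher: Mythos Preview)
Your proof is correct and follows essentially the same approach as the paper's own proof, which simply invokes $\Phi(0,0,0)=(0,0)$ for (a), Lemma~\ref{Implicit} for (b), and Lemma~\ref{prkk1} plus uniqueness from the Implicit Function Theorem for (c). You have supplied the routine verifications (parity of the candidate in (c), smallness for $\beta$ near $0$) that the paper leaves implicit, but the structure and the key ingredients are identical.
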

\begin{proof}
The proof of (a) follows from the fact that $ \Phi(0, 0)=(0,0,0)$. The proof of (b) is direct from Lemma \ref{Implicit}. The proof of (c) is also direct from Lemma \ref{prkk1} and the uniqueness for the values of $\Phi$ given by the Implicit Function Theorem.
\end{proof}

\medskip

It turns out that a very important quantity to consider when dealing with the asymptotic stability of 
kinks, is its momentum (see \eqref{Momentum}), which is a key tool to find suitable rigidity and smoothness 
properties of the limit profile. For instance, in \cite{KMM} all solutions considered had zero momentum. 
In Theorem \ref{MT0}, solutions do have nonzero momentum, but the kink itself may not have zero momentum, which makes its description harder than usual.

\begin{lem}[Momentum of the initial data]\label{Mom_Backlund}
Let $\vec \phi=\vec \phi(t,x)$ be a solution issue of the initial data $(\phi_0,\phi_1) \in \mathcal M_\eta$ in \eqref{Manifold}. Then one has
\be\label{new_P}
P[\vec\phi] =  2\left(\frac1{1+\delta} - (1+\delta) \right).
\ee
In particular, the sign of the momentum depends on the sign of $\delta$, and the zero momentum submanifold is generated by the  perturbation data $(\widetilde u_0, \widetilde s_0):=\Phi(y_0, 0,0)$.
\end{lem}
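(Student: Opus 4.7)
The plan is to exploit conservation of momentum together with a total-derivative identity coming directly from the B\"acklund system \eqref{perm_k_bajada1}-\eqref{perm_k_bajada2}. Since $P[\vec\phi]$ is conserved in time and $(\phi,\phi_t)(0)=(Q^0+\widetilde u_0,\widetilde s_0)$, it suffices to evaluate
\[
2P[\vec\phi] \;=\; \int_\R \widetilde s_0\,(Q^0+\widetilde u_0)_x\,dx,
\]
an absolutely convergent integral because $\widetilde s_0\in L^2_e$ and $(Q^0+\widetilde u_0)_x\in L^2_e$ (the exponential decay of $Q^0_x$ handles the background). After writing $\widetilde Q^0=Q^0-\pi$, so that $\cos((\widetilde Q^0+\cdots)/2)=\sin((Q^0+\cdots)/2)$, the relations \eqref{perm_k_bajada1}-\eqref{perm_k_bajada2} in the case $v_0=0$ read
\begin{align*}
(Q^0+\widetilde u_0)_x &= \tfrac{1}{1+\delta}\mathcal{A} + (1+\delta)\mathcal{B},\\
\widetilde s_0-y_{0,x} &= \tfrac{1}{1+\delta}\mathcal{A} - (1+\delta)\mathcal{B},
\end{align*}
where $\mathcal{A}:=\sin((Q^0+\widetilde u_0+y_0)/2)$ and $\mathcal{B}:=\sin((Q^0+\widetilde u_0-y_0)/2)$.

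The heart of the argument is the following total-derivative identity. Introduce
\[
F(x) := -\frac{2}{1+\delta}\cos\!\left(\frac{Q^0+\widetilde u_0+y_0}{2}\right) + 2(1+\delta)\cos\!\left(\frac{Q^0+\widetilde u_0-y_0}{2}\right),
\]
and differentiate term by term. Grouping according to $(Q^0+\widetilde u_0)_x$ and $y_{0,x}$, one gets
\[
\partial_x F = (Q^0+\widetilde u_0)_x\Big[\tfrac{\mathcal{A}}{1+\delta}-(1+\delta)\mathcal{B}\Big] + y_{0,x}\Big[\tfrac{\mathcal{A}}{1+\delta}+(1+\delta)\mathcal{B}\Big].
\]
By the two B\"acklund identities above the first bracket equals $\widetilde s_0-y_{0,x}$ and the second equals $(Q^0+\widetilde u_0)_x$, so the $y_{0,x}$-contributions cancel exactly and one is left with the remarkable identity $\partial_x F=\widetilde s_0\,(Q^0+\widetilde u_0)_x$. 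In particular the integrand of $2P$ depends on $y_0$ and $\delta$ only through a pure boundary term.

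It remains to evaluate the boundary values of $F$. Since $\widetilde u_0,y_0\in H^1_o\hookrightarrow C_0(\R)$ both vanish at $\pm\infty$, and $Q^0(x)\to 2\pi$ (resp.\ $0$) as $x\to+\infty$ (resp.\ $x\to-\infty$), both half-angle arguments of $F$ tend to $\pi$ at $+\infty$ and to $0$ at $-\infty$, yielding
\[
F(+\infty)=\tfrac{2}{1+\delta}-2(1+\delta),\qquad F(-\infty)=-\tfrac{2}{1+\delta}+2(1+\delta).
\]
Integrating the total derivative gives $2P = F(+\infty)-F(-\infty) = 4\bigl(\tfrac{1}{1+\delta}-(1+\delta)\bigr)$, which is \eqref{new_P}. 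The sign assertion and the characterization of the zero-momentum slice $\delta=0$ follow from $\tfrac{1}{1+\delta}-(1+\delta)=-\delta(2+\delta)/(1+\delta)$, which has sign opposite to $\delta$ and vanishes only at $\delta=0$ for $|\delta|<\eta_0\ll 1$. The only step requiring mild care is the passage from the pointwise identity to the boundary evaluation, but this is automatic: $\partial_x F\in L^1(\R)$ (being a product of two $L^2$ functions), so $F$ is absolutely continuous on $\R$ with well-defined limits at $\pm\infty$ matching the pointwise asymptotics used above.
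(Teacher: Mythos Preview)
Your proof is correct and follows essentially the same idea as the paper: both rely on the B\"acklund relations to reduce the momentum integral to a boundary evaluation involving the limits of $\cos\big((Q^0+\widetilde u_0\pm y_0)/2\big)$ at $\pm\infty$. The paper invokes a general momentum identity from \cite{MP} (their Lemma~2.4) relating $P[\vec\phi]$ to $P[y_0,0]$ plus the boundary terms $\ell_\pm^\pm$, whereas you derive the total-derivative identity $\partial_x F=\widetilde s_0\,(Q^0+\widetilde u_0)_x$ directly in this specific setting; this is precisely the computation underlying that cited lemma, specialized to $v_0=0$.
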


\begin{proof}
The proof of this result is consequence of Lemma 2.4 in \cite{MP}, which states that 
\[
P[\vec\phi] = P[y_0,0] +\frac{1}{a+\delta} (\ell_+^+-\ell_-^+)(t) -(a+\delta)(\ell_+^- -\ell_-^-)(t),
\] 
and the fact that $P[y_0,0] =0$ and
\[
\begin{aligned}
\ell_+^\pm (t) := &~{} \lim_{x\to + \infty }\left( 1- \cos\left(\dfrac{Q +\widetilde u_0\pm y_0}{2}\right)\right) = 2, \\
 \ell_-^\pm (t) :=&~{}  \lim_{x\to - \infty } \left( 1-\cos\left(\dfrac{Q+\widetilde u_0\pm y_0}{2}\right) \right)= 0.
 \end{aligned}
\]
(Note that both limits make sense because $\widetilde u_0$ and $y_0$ are in $H^1$ in one dimension.)
\end{proof}
Since it will be important in the proof of Theorem \ref{MT0}, we will record the {\bf zero momentum submanifold} $\mathcal M_{\eta,0} \subseteq \mathcal M_\eta  $ as
\be\label{Manifold0}
\mathcal M_{\eta,0} := \Big\{ (Q^0,0) + (\widetilde u_0, \widetilde s_0) ~ :  ~  (\widetilde u_0, \widetilde s_0):=\Phi(y_0, 0,0), \quad y_0 \in H^1_o, \quad \|y_0\|_{H^1} <\eta  \Big\}.
\ee
This manifold is characterized by taking $\delta=0$ in $\mathcal M_{\eta}$, see \eqref{new_P}. As we will see below, working in this manifold will simplify and clarify the dynamics of the kink. However, we believe that some interesting properties are also possible to show in the general manifold $\mathcal M_{\eta}$.

\medskip

We conclude this section with the following result, which ends the proof of Theorem \ref{MT0}, parts (1) and (3).

\begin{cor}
For data in $(\phi,\phi_t) \in \mathcal M_{\eta,0}$, one has zero momentum: $P[(\phi,\phi_t)]=0$.
\end{cor}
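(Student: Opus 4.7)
The plan is to observe that this corollary is an immediate specialization of Lemma \ref{Mom_Backlund} together with conservation of momentum \eqref{Momentum}. By construction in \eqref{Manifold0}, any $(\phi_0,\phi_1)\in\mathcal M_{\eta,0}$ is of the form $(Q^0,0) + \Phi(y_0,0,0)$ for some $y_0\in H^1_o$ with $\|y_0\|_{H^1}<\eta$; that is, the defining parameter $\delta$ is set to $0$. Since Lemma \ref{Mom_Backlund} gives the explicit formula
\[
P[\vec\phi] \;=\; 2\left(\frac{1}{1+\delta} - (1+\delta)\right),
\]
one simply plugs in $\delta=0$ to obtain $P[(\phi_0,\phi_1)]=2(1-1)=0$.

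Next, recall that $P$ is a conserved quantity for \eqref{sg1}: from \eqref{Momentum} we have $P[\vec\phi](t)=P[\vec\phi](0)$ for all $t\in\R$, and this conservation is valid for solutions in the energy space (cf.\ \cite{DG,MP} and the well-posedness discussion after \eqref{Momentum}). Combining both facts yields $P[(\phi,\phi_t)](t)=0$ for every $t\in\R$, which is exactly the claim.

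The only potentially delicate point, and hence the step I would take a bit of care with, is to make sure that the formula of Lemma \ref{Mom_Backlund} is being applied at the right instant: $\Phi(y_0,0,0)$ gives a static identification at $t=0$ via the B\"acklund functionals \eqref{f1_new}--\eqref{f2_new}, so the formula computes the momentum of the initial datum, not of the evolved solution. One then transports the equality in time using \eqref{Momentum}. There is no further obstacle: this corollary is essentially a one-line consequence of Lemma \ref{Mom_Backlund}, and it closes items (1) and (3) of Theorem \ref{MT0} as announced.
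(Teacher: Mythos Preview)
Your argument is correct and matches the paper's own reasoning: the corollary is stated without proof precisely because it follows immediately from Lemma~\ref{Mom_Backlund} (formula~\eqref{new_P}) specialized to $\delta=0$, together with the definition~\eqref{Manifold0} of $\mathcal M_{\eta,0}$. Your additional remark about transporting the identity in time via conservation of momentum~\eqref{Momentum} is exactly what is needed to close part~(1) of Theorem~\ref{MT0}, as the paper indicates.
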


\medskip

\section{Modulation of the data}\label{8}

In this section we will choose suitable modulation parameters to ensure a uniquely defined dynamics for the perturbation terms in \eqref{final_data0_0}.

\subsection{Choice of final speed} In this subsection, we shall describe the final speed obtained by a perturbed kink. There are at least two ways to understand this final speed, but both are equivalent, as we will see below.

\medskip

The first definition of final speed is motivated by the momentum of the kink profile, assuming that around it there is nothing close at infinity in time (which means that there is asymptotic stability). 

\begin{defn}[Final speed via conservation of momentum]\label{beta1}
Consider $0<\eta<\eta_0$ as given in Lemma \ref{Implicit}. Let $y_0\in H^1_o$, and $(\widetilde u_0,\widetilde s_0,\delta)=\Phi(y_0,0,\delta)$ such that $(Q^0,0)+ (\widetilde u_0,\widetilde s_0)$ is in $\mathcal M_\eta$ in \eqref{Manifold}. We define the final speed $\beta_1\in \R$ as the unique solution to the equation
\be\label{momentum_beta1}
P[(Q,Q_t)(\cdot; \beta_1,0)] = \frac12\int \widetilde s_0 (Q^0+\widetilde u_0)_x .
\ee
where $P$ is the momentum \eqref{Momentum}, and $(Q, Q_t)=(Q,Q_t)(x;\beta,x_0)$ be a kink profile of parameters $\beta\in (-1,1)$ and $x_0\in\R$.
\end{defn}

\begin{rem}
Under the smallness assumption $0<\eta<\eta_0$, equation \eqref{momentum_beta1} has always a unique solution for $\beta_1$. Indeed, from \eqref{Q}-\eqref{Qt} one has
\be\label{igualdad_1}
\begin{aligned}
P[(Q,Q_t)(\cdot; \beta_1,0)] = &~{} \frac12 \int (Q_xQ_t)(x; \beta_1,0)dx  \\
=&~{} -\frac12 \beta_1\gamma(\beta_1) \int Q'^2 = -4 \beta_1 (1-\beta_1^2)^{-1/2},
\end{aligned}
\ee
so that \eqref{momentum_beta1} has a unique solution thanks to the Inverse Function Theorem.
\end{rem}

\begin{rem}
As we have already mentioned, the motivation for this choice is simple and comes from the conservation of momentum. If asymptotic stability holds around the kink solution, it must imply that around the kink there is no additional momentum.  
\end{rem}

The second option to define a final speed is given by the B\"acklund transformation. 
Indeed, from \eqref{perm_k_bajada1}-\eqref{perm_k_bajada2}, there is a natural way to define a final
speed in the presence of asymptotic stability, respecting the absence of additional energy/momentum at infinity. 
For this definition, recall Lemma \ref{prkk1} on the B\"acklund transformations for general kink profiles.

\begin{defn}[Final speed via B\"acklund transformations]\label{beta2}
Consider $0<\eta<\eta_0$ as given in Lemma \ref{Implicit}. Let $y_0\in H^1_o$, and $(\widetilde u_0,\widetilde s_0,\delta)=\Phi(y_0,0,\delta)$ such that $(Q^0,0)+ (\widetilde u_0,\widetilde s_0)$ is in $\mathcal M_\eta$ in \eqref{Manifold}. Then, we define the final speed $\beta_2\in (-1,1)$ as the unique solution for the speed of a kink + no-dispersion in the B\"acklund equations with parameter $1+\delta:$
\begin{align}
 \widetilde{Q}_x&= \dfrac{1}{1+ \delta}\cos\left(\frac{\widetilde{Q}}{2}\right)+ (1+\delta) \cos\left(\frac{\widetilde{Q}}{2}\right),\label{perm_k_bajada1_a}
    \\ \widetilde{Q}_t &=\dfrac{1}{1+ \delta}\cos\left(\frac{\widetilde{Q}}{2}\right)-(1+\delta)\cos\left(\frac{\widetilde{Q}}{2}\right), \label{perm_k_bajada2_a}
\end{align} 
in the sense that $a(\beta_2)=1+\delta$ (see \eqref{a(beta)}), and  $(\widetilde Q, \widetilde Q_t)=(\widetilde Q, \widetilde Q_t)(x;\beta_2,0)$ be a kink profile of parameters $\beta_2\in (-1,1)$ and shift $x_0=0.$
\end{defn}


The following result shows that both definitions of final speed are equivalent. 

\begin{lem}\label{betas}
For a given solution $\vec \phi =(\phi,\phi_t)$ of SG with initial data of the form $(Q^0,0)+ (\widetilde u_0,\widetilde s_0) \in \mathcal M_\eta$ (see \eqref{Manifold}), one has  $\beta_1=\beta_2=:\beta$. 
\end{lem}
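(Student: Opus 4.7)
The plan is to show that both definitions encode the same quantity, namely the total momentum of the solution $\vec\phi$ at $t=0$, and then invoke the strict monotonicity of $\beta \mapsto P[(Q,Q_t)(\cdot;\beta,0)]$.

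First, I would rewrite the right-hand side of \eqref{momentum_beta1} as a momentum. Since the initial data satisfies $\phi(0,x) = Q^0(x) + \widetilde u_0(x)$ and $\phi_t(0,x) = \widetilde s_0(x)$ (recall $\widetilde Q_t^0 = 0$ when $\beta=0$), one has directly from \eqref{Momentum} that
\[
\tfrac{1}{2}\int \widetilde s_0 \, (Q^0+\widetilde u_0)_x \, dx \;=\; P[\vec\phi](0) \;=\; P[\vec\phi],
\]
using conservation of momentum. Hence \eqref{momentum_beta1} reduces to the identity $P[(Q,Q_t)(\cdot;\beta_1,0)] = P[\vec\phi]$.

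Next, I would evaluate both sides as explicit functions of $\delta$. On one hand, Lemma \ref{Mom_Backlund} gives the closed form
\[
P[\vec\phi] \;=\; 2\!\left(\frac{1}{1+\delta}-(1+\delta)\right).
\]
On the other hand, computation \eqref{igualdad_1} yields $P[(Q,Q_t)(\cdot;\beta_2,0)] = -4\beta_2(1-\beta_2^2)^{-1/2}$. The defining relation $a(\beta_2)=1+\delta$ in Definition \ref{beta2} together with \eqref{a(beta)} gives $\frac{1+\beta_2}{1-\beta_2}=(1+\delta)^2$, from which a short algebraic manipulation produces
\[
\sqrt{1-\beta_2^2}=\frac{2(1+\delta)}{(1+\delta)^2+1},\qquad
\frac{\beta_2}{\sqrt{1-\beta_2^2}}=\frac{1}{2}\!\left((1+\delta)-\frac{1}{1+\delta}\right),
\]
so that
\[
P[(Q,Q_t)(\cdot;\beta_2,0)] \;=\; 2\!\left(\frac{1}{1+\delta}-(1+\delta)\right) \;=\; P[\vec\phi].
\]

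Combining the two reductions, we obtain $P[(Q,Q_t)(\cdot;\beta_1,0)]=P[(Q,Q_t)(\cdot;\beta_2,0)]$. Since the map $\beta \mapsto -4\beta(1-\beta^2)^{-1/2}$ is a strictly decreasing bijection of $(-1,1)$ onto $\mathbb R$, this forces $\beta_1=\beta_2$, as claimed. No serious obstacle is anticipated here: the whole argument is a bookkeeping exercise once one recognizes \eqref{momentum_beta1} as an equation of the form (momentum of pure kink profile) = (momentum of the full perturbed initial data), and observes via Lemma \ref{Mom_Backlund} that the Bäcklund parameter $\delta$ encodes exactly this same momentum.
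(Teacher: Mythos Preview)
Your proof is correct and follows essentially the same route as the paper: both recognize that \eqref{momentum_beta1} equates the momentum of a kink profile at speed $\beta_1$ with $P[\vec\phi]$, use Lemma \ref{Mom_Backlund} to express $P[\vec\phi]$ in terms of $\delta$, and then verify via $a(\beta_2)=1+\delta$ that this same value equals $P[(Q,Q_t)(\cdot;\beta_2,0)]$, concluding by injectivity of $\beta\mapsto \beta(1-\beta^2)^{-1/2}$ on $(-1,1)$. Your presentation is in fact slightly cleaner, making the intermediate algebraic identities for $\sqrt{1-\beta_2^2}$ and $\beta_2/\sqrt{1-\beta_2^2}$ explicit.
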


\begin{proof}
First of all, we recall that from \eqref{momentum_beta1} we have $P[\vec \phi] = P[Q^0 + \widetilde u_0, \widetilde s_0] =: P[(Q,Q_t)(\cdot; \beta_1,0)].$ Now notice that, on the one-hand, since $(\widetilde u_0, \widetilde s_0)$ are constructed using \eqref{perm_k_bajada1} and \eqref{perm_k_bajada2}, $\delta$ given and $v_0=0$, from Lemma 
\ref{Mom_Backlund} we have
\[
P[Q^0 + \widetilde u_0, \widetilde s_0]  =  2\left(\frac1{1+\delta} - (1+\delta) \right) = P[(Q,Q_t)(\cdot; \beta_2,0)] .
\]
On the other hand, by Definition \ref{beta1} we have $P[(Q,Q_t)(\cdot; \beta_1,0)]  = -4 \beta_1 (1-\beta_1^2)^{-1/2}$. Hence, recalling that $a(\beta_2)=1+\delta$ and by using \eqref{a(beta)} we obtain 
\[
  2\left(\frac1{1+\delta} - (1+\delta) \right) = -\frac{4\beta_2}{(1-\beta^2_2)^{1/2}}, \ \, \hbox{ and therefore } \ \, \frac{\beta_1}{(1-\beta^2_1)^{1/2}}=\frac{\beta_2}{(1-\beta^2_2)^{1/2}}.
\]
Finally, noticing that $f(x)=\tfrac{x}{(1-x^2)^{1/2}}$ is injective for $x\in(0,1)$, we conclude the desired result.   
\end{proof}

Consequently, in what follows, we will work with kink profiles of the form 
\[
(Q,Q_t)(\cdot; \beta,x_0), 
\]
with $\beta$ given by any of the equivalent definitions \ref{beta1} or \ref{beta2}, and $x_0\in\R$, possibly depending on time, to be chosen later.

\subsection{Rewriting of the initial data}

We need an additional makeover to fully introduce modulations, related to the initial data for general nonzero momentum. Recall that from \eqref{Manifold} we have the initial data of the form
\be\label{Initial_data_1}
(Q^0,0) + (\widetilde u_0, \widetilde s_0) \quad \hbox{s.t.}  \quad (\widetilde u_0, \widetilde s_0):=\Phi(y_0, 0,\delta), \quad y_0 \in H^1_o, \quad \|y_0\|_{H^1} +|\delta|<\eta.
\ee
These initial data can be conveniently written as follows:
\be\label{Initial_data_2}
(Q^0,0) + (\widetilde u_0, \widetilde s_0)= (Q,Q_t)(\cdot, \beta,0) + (\hat u_0, \hat s_0),
\ee
where
\be\label{Initial_data_3}
(\hat u_0, \hat s_0):= ((Q^0,0) -(Q,Q_t))(\cdot, \beta,0)+ (\widetilde u_0, \widetilde s_0) \in H^1_o\times L^2_e.
\ee
Note that, written this way, $(\hat u_0, \hat s_0)$ is still (odd, even) and small enough. Moreover, the solutions to SG with initial data \eqref{Initial_data_1} and \eqref{Initial_data_2} are just the same.

\begin{rem}
The makeover \eqref{Initial_data_2} is made to precisely catch the final speed of the kink in the case where the data has general nonzero momentum. However, in Theorem \ref{MT0}, since the data has zero momentum,  \eqref{Initial_data_2} will not be necessary, in the sense that we can take $\beta=0$ (since $\delta=0$) on the right hand side and $(\widetilde u_0, \widetilde s_0)=(\hat u_0, \hat s_0)$.
\end{rem}

\subsection{Modulation}

Now we are ready to show a modulation result for the solution close to the kink profile. For a similar statement, see e.g. \cite{AM1,MP}. Since Theorem \ref{MT0} is equivalent for positive and negative times, we only consider the positive time case.

\medskip

Let $(\phi,\phi_t)$ be the solution to SG \eqref{sg1} with initial data \eqref{Initial_data_2}. The perturbation data $(\hat u_0, \hat s_0)$ is assumed small, depending on $\eta\in (0,\eta_0)$, parameter of the problem. Define, for $K^*>1$ large,
\be\label{T_mod}
\begin{aligned}
T^*:= &~{} \sup \Big\{ T>0 ~  : ~  \forall~ t\in[0,T], \, \exists~ \tilde \rho(t)\in\R ~ s.t. ~  \\
&~{} \qquad\qquad\qquad   \|(\phi,\phi_t)(t)-  (Q,Q_t)(x; \beta,\beta t+ \tilde \rho(t))\|_{H^1\times L^2 } <K^* \eta \Big\}.
\end{aligned}
\ee
Clearly $T^*>0$ because of the continuity of the SG flow, the assumption  \eqref{initial_data} on the initial data and the fact that $K^*>1$. Suppose, as in \cite{AM,MP}, that $T^*<+\infty$. 

\begin{lem}[Modulation]\label{Ortho}
By taking $\eta_0$ smaller if necessary, the following is satisfied. Let $\vec \phi_0:=(Q,Q_t)(\cdot, \beta,0) + (\hat u_0, \hat s_0)$ be given initial data as in \eqref{Initial_data_2}, and let $\vec \phi(t)$ be the corresponding solution to SG, with $\vec \phi(t=0)=\vec \phi_0$. Then, there is  $\rho(t)\in \R$ such that, 
\be\label{Ortho1}
(\widetilde u,\widetilde s)(t,x) := (\phi,\phi_t)(t,x)-  (Q,Q_t)(x; \beta,\beta t + \rho(t))
\ee
satisfies, for all $t\in[0,T^*]$,
\be\label{Ortho2}
\int (\widetilde u,\widetilde s)(t,x)\cdot (\widetilde Q_x,\widetilde Q_{t,x})(x;\beta,\beta t + \rho(t)) dx=0.
\ee
Moreover, under this condition we have the following dynamical equations for $(\widetilde u,\widetilde s)$ and $\rho'(t):$
\be\label{rho(0)}
 \rho(0)=0, \quad (\widetilde u,\widetilde s)(t=0)=(\hat u_0,\hat s_0),
\ee
and
\be\label{eqn_us}
\left\{\begin{array}{ll}
\widetilde u_t = \widetilde s+\rho' \widetilde Q_x,
\\ \widetilde s_t=\widetilde u_{xx} -\sin( Q+\widetilde u) +\sin Q + \rho' \widetilde Q_{t,x}.
\end{array}\right.
\ee
\end{lem}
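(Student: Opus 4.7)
The plan is to apply the Implicit Function Theorem to the scalar modulation functional
\[
J(t,\rho):=\int\Big[(\phi,\phi_t)(t,x)-(Q,Q_t)(x;\beta,\beta t+\rho)\Big]\cdot(\widetilde Q_x,\widetilde Q_{t,x})(x;\beta,\beta t+\rho)\,dx,
\]
and then to derive \eqref{eqn_us} by differentiating \eqref{Ortho1} in time. At $(t,\rho)=(0,0)$ the integrand reduces to $(\hat u_0,\hat s_0)\cdot(\widetilde Q_x,\widetilde Q_{t,x})(x;\beta,0)$; since $(\hat u_0,\hat s_0)\in H^1_o\times L^2_e$ by \eqref{Initial_data_3}, and since $\widetilde Q$ is odd in $x$ while $\widetilde Q_t$ is even in $x$ when $x_0=0$ (cf.\ Lemma \ref{Parity} and \eqref{tildeQ}), $\widetilde Q_x$ is even and $\widetilde Q_{t,x}$ is odd. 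Both summands of the integrand are therefore odd functions of $x$, and $J(0,0)=0$. In particular, the choice $\rho(0)=0$ is automatically compatible with the orthogonality \eqref{Ortho2} at $t=0$; no initial shift has to be paid, in contrast with the general-perturbation setting of \cite{HW,MP}.

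Using $\widetilde Q_x=Q_x$, $\widetilde Q_t=Q_t$ and the chain-rule identity $\partial_\sigma Q(x;\beta,\sigma)=-Q_x(x;\beta,\sigma)$ (similarly for $Q_t$ and $Q_x$), a direct computation gives
\[
\partial_\rho J(t,\rho)=\int\big(Q_x^2+Q_{t,x}^2\big)(x;\beta,\beta t+\rho)\,dx-\int(\widetilde u,\widetilde s)(t,x)\cdot(\widetilde Q_{xx},\widetilde Q_{t,xx})(x;\beta,\beta t+\rho)\,dx.
\]
The first term is a strictly positive constant depending only on $\beta$ (of order one), while by Cauchy--Schwarz and the defining property \eqref{T_mod} of $T^*$, the second term is at most $CK^*\eta$. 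Shrinking $\eta_0$ if necessary, $\partial_\rho J$ stays bounded away from zero uniformly on $[0,T^*]\times\{|\rho|\le K^*\eta\}$, so the Implicit Function Theorem supplies a unique $C^1$ function $\rho:[0,T^*]\to\R$ with $\rho(0)=0$ and $J(t,\rho(t))\equiv 0$; this is precisely \eqref{Ortho2}. The initial condition $(\widetilde u,\widetilde s)(0)=(\hat u_0,\hat s_0)$ then follows by evaluating \eqref{Ortho1} at $t=0$ using \eqref{Initial_data_2}.

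Finally, set $\sigma(t):=\beta t+\rho(t)$. Differentiating $\widetilde u=\phi-Q(\cdot;\beta,\sigma(t))$ in time and using $\partial_tQ(x;\beta,\sigma(t))=-\sigma'(t)Q_x$ together with the profile identity $Q_t=-\beta Q_x$, one obtains $\widetilde u_t=\widetilde s+\rho'(t)\widetilde Q_x$. For the second equation, I would differentiate $\widetilde s=\phi_t-Q_t(\cdot;\beta,\sigma(t))$, substitute the SG equation $\phi_{tt}=\phi_{xx}-\sin\phi$, and use the profile identity $Q_{xx}=\gamma^2\sin Q$ (which follows from differentiating the B\"acklund equation in Lemma \ref{prkk1}) together with $\beta Q_{t,x}=-\beta^2Q_{xx}$. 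The crucial algebraic cancellation $Q_{xx}+\beta Q_{t,x}=\gamma^2(1-\beta^2)\sin Q=\sin Q$ then delivers $\widetilde s_t=\widetilde u_{xx}-\sin(Q+\widetilde u)+\sin Q+\rho'\widetilde Q_{t,x}$, as claimed.

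The main obstacle I anticipate is the uniform non-degeneracy of $\partial_\rho J$ on the whole interval $[0,T^*]$, which is what allows the IFT to be globalized; this ultimately reduces to comparing the explicit order-one lower bound for $\int(Q_x^2+Q_{t,x}^2)$ against the smallness enforced by the bootstrap window $K^*\eta\ll 1$. The parity observation yielding $J(0,0)=0$ (and hence $\rho(0)=0$ without any further adjustment) is the essential new ingredient, inherited from the construction of the manifold $\mathcal M_\eta$ in Section \ref{7}; it is precisely what removes the need for an extra orthogonality parameter of the type used in \cite{HW,MP}.
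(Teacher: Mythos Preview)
Your proposal is correct and follows essentially the same approach as the paper: both apply the Implicit Function Theorem to the scalar orthogonality functional, using that the main term $\int(Q_x^2+Q_{t,x}^2)$ dominates the perturbative term coming from $(\widetilde u,\widetilde s)$ on the bootstrap interval, and both observe that $\rho(0)=0$ follows from the parity of $(\hat u_0,\hat s_0)$ against $(\widetilde Q_x,\widetilde Q_{t,x})$. The only cosmetic difference is that the paper sets up the IFT with $(\phi,\psi)$ as the free variable (and then composes with the flow, noting the neighborhood size is independent of the base point $z_0$), whereas you apply it directly to $J(t,\rho)$; your explicit derivation of \eqref{eqn_us} via the profile identities $Q_t=-\beta Q_x$ and $Q_{xx}+\beta Q_{t,x}=\sin Q$ is in fact more detailed than the paper's, which simply declares these equations ``direct''.
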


\begin{proof}
Let $z_0\in\R$ be fixed. The idea of the proof is to use the Implicit Function Theorem to ensure the existence of such decomposition with $(\widetilde u,\widetilde s)(t,x)$ satisfying the orthogonality condition \eqref{Ortho2}. First of all, let us consider the neighborhood \[
\mathcal{U}(\nu):=\{(\phi,\psi)\in H^1_{loc}\times L^2_{loc}: \ \Vert (\phi,\psi)-(\widetilde Q,\widetilde Q_t)(x;\beta,z_0)\Vert_{H^1\times L^2}<\nu\}.
\]
Note that even when $(\phi,\psi)$ does not belong to $H^1\times L^2$, its difference with $(\widetilde Q,\widetilde Q_t)$ does. Now, define the functional $Y:\mathcal{U}(\eta)\times(-\eta,\eta)\to\R$, given by
\[
Y(\phi,\psi, \rho):=\int_\R\big((\phi,\psi)(x)-(\widetilde Q,\widetilde Q_t)(x;\beta,z_0+ \rho)\big)\cdot (\widetilde Q_x,\widetilde Q_{t,x})(x;\beta,z_0+ \rho) dx.
\]
It is clear that $Y$ is a $C^1$ functional. Moreover, we have \begin{align*}
\dfrac{\partial Y}{\partial\rho}(\phi,\psi, \rho)&=\int_\R\Big(\widetilde Q^2_x(x;\beta,z_0+ \rho)+\widetilde Q^2_{t,x}(x;\beta,z_0+ \rho)\Big) dx
\\ & \quad + \int_\R\big((\phi,\psi)(x)-(\widetilde Q,\widetilde Q_{t})(x;\beta,z_0+ \rho)\big)\cdot (\widetilde Q_{xx},\widetilde Q_{t,xx})(x;\beta,z_0+ \rho) dx
\end{align*}
Finally, note that at the point $(\widetilde Q,\widetilde Q_t)(x;\beta,z_0)$ we have 
$Y(\widetilde Q,\widetilde Q_t,0)=0$ 
and 
\begin{align}\label{invertible}
\dfrac{\partial Y}{\partial \rho}(\widetilde Q,\widetilde Q_t,0)=-\int_\R \Big(\widetilde Q_x^2(x;\beta,z_0)+\widetilde Q_{t,x}^2(x;\beta,z_0)\Big)dx\neq 0.
\end{align}
Thus, by the Implicit Function Theorem we deduce the existence of $\widetilde\eta>0$ small enough and a $\mathcal{C}^1$ function $\rho(\phi,\psi)$ 
defined on the neighborhood 
$\mathcal{U}(\widetilde \eta)\times (-\widetilde \eta,\widetilde \eta)$ 
to a neighborhood of zero such that 
\[
Y(\phi,\psi, \rho)=0 \quad \hbox{for all }\  (\phi,\psi)\in \mathcal{U}(\widetilde \eta)\times (-\widetilde \eta,\widetilde \eta).
\]
Note that $\widetilde \eta$ only depends on \eqref{invertible} and not on the point $z_0\in\R$. Hence, for every $(\phi,\psi)\in \mathcal{U}(\widetilde\eta)\times(-\widetilde\eta,\widetilde\eta)$ we define the shift $\rho_{z_0}(\phi,\psi):=z_0+\rho(\phi,\psi)$. Finally, using the definition of $T^*$ we can define the mapping $t\mapsto \widetilde\rho(t)$ on $[0,T^*]$ by setting $\widetilde\rho(t):=\rho_{\beta t}((\phi,\phi_t)(t))$. Thus we obtain \[
\int (\widetilde u,\widetilde s)(t,x)\cdot (\widetilde Q_x,\widetilde Q_{t,x})(x;\beta,\beta t+\rho(t))=0, \quad \forall t\in[0,T^*].
\]
Finally, \eqref{rho(0)} is direct from the chosen type of initial perturbative data (odd, even) and \eqref{Ortho2}, and \eqref{eqn_us} are direct.  
%
%
\end{proof}

\medskip

\section{Lifting of the data}\label{9}

Let $(\phi_0,\phi_1)\in \mathcal M_\eta$ be initial data as described in Lemma \ref{Implicit} and \eqref{Manifold}. Then, there exist unique  $(y_0, \delta)\in H^1_o\times \R $ with   $ \|y_0\|_{H^1} +|\delta|<\eta$ and such that $(\widetilde u_0, \widetilde s_0)=\Phi(y_0, 0,\delta)\in H^1_o\times L^2_e$.  Consider the SG equation \eqref{sg1} with initial data $(y_0,0)$. Since this data is (odd, odd), the evolution preserves this property. Namely, there exists a unique global solution $(y,v) \in C(\R ; \in H^1_o\times L^2_o)$ such that 
\[
\| (y,v)(t)\|_{H^1\times L^2} \lesssim \eta.
\]
Note additionally that, since $v_0=0$ in this case, one has that $(y,v)(t)$ has zero momentum. 

\medskip

We shall use the results in \cite{KMM2} (see also \cite{Coron} for earlier results), which claim that (odd, odd) small perturbations of the zero state in SG must converge to zero in compact intervals of space, as time tends to infinity. This is the key part of the paper, stated in Theorem \ref{thmkmm2}, in the sense that if we are not able to get  (odd, odd) data around zero, then we cannot use \cite{KMM2}. The construction of the manifold (odd, even) of initial data around $Q$ is precisely the way we have to ensure that the data around zero have the right parity conditions. 

\medskip

Consider the (odd, odd) solution $(y,v)(t)\in H^1\times L^2 $ of SG mentioned in Theorem \ref{thmkmm2}, 
and constructed using the initial data in $\mathcal M_\eta$. The purpose of this Section is to connect this 
solution with the one described in Lemma \ref{Ortho}, eqn. \eqref{Ortho1}. 

\medskip

The main problem associated to this connection (if possible), is to arrive to the correct 
solution of SG. Here, the uniqueness of the solution (given the same initial data) will be essential to conclude this property. The correct choice of data will be given by \eqref{Ortho1}.

\medskip

\subsection{Lifting via modified Implicit Function} Indeed, let $t\in [0,T^*]$ be fixed, and consider $(Q,Q_t)= (Q,Q_t)(x; \beta,\beta t + \rho(t))$ and its corresponding modified profile $(\widetilde Q, \widetilde Q_t):= (\widetilde Q, \widetilde Q_t)(x; \beta,\beta t + \rho(t))$ (see \eqref{tildeQ}).
Using \eqref{f1}-\eqref{f2}, and given $(y(t),v(t),\delta)\in H^1_o\times L^2_o\times \R$, we will look for a solution $(\hat u,\hat s)(t) \in H^1\times L^2$ of\footnote{Note that $(\hat u,\hat v)(t)$ have no longer a parity property, consequence of the shift $\rho(t)$ and (if nonzero), the speed parameter $\beta$.} 
\begin{align}
 \widetilde{Q}_x+\hat u_{x}-v &= \dfrac{1}{1+\delta}\cos\left(\dfrac{\widetilde{Q}+ \hat u+ y}{2}\right)+ (1+\delta) \cos\left(\dfrac{\widetilde{Q}+\hat u-y}{2}\right),\label{a111}
    \\ \widetilde{Q}_t+ \hat s - y_{x}&=\dfrac{1}{1+\delta}\cos\left(\dfrac{\widetilde{Q}+\hat u+y}{2}\right)-(1+\delta)\cos\left(\dfrac{\widetilde{Q}+\hat u-y}{2}\right),\label{a112}\\
 \text{with}\quad  & 
 \int (\hat u,\hat s)(t,x)\cdot (\widetilde Q_x,\widetilde Q_{t,x})(x;\beta,\beta t + \rho(t)) dx = 0.\label{a113}
\end{align} 
As in \cite{HW}, \cite{AM1} and \cite{MP}, the extra orthogonality condition \eqref{a113} is essential to uniquely solve this nonlinear system.

\medskip

Following the proof of Lemma \ref{Implicit}, solving for $\hat s$ is trivial once we solve for $\hat u$. Hence, we must solve
\be\label{EDO1_new}
\hat u_{x}  +\frac12 \left( \frac{1}{(1+ \delta)} + (1+\delta)\right) \sin\left(\frac{\widetilde{Q}}{2}\right)\hat u  = f, \qquad \hat u \in H^1,
\ee
for any $f \in L^2$.  We have $ \sin\left(\frac{\widetilde{Q}}{2}\right)= \tanh (\gamma (x-\beta t -\rho(t))),$ and \eqref{EDO1_new} becomes
\[
\left( \hat u\cosh^{\nu_0} (x-\beta t -\rho(t)) \right)_x = f \cosh^{\nu_0} (x-\beta t -\rho(t)),\quad \nu_0 = \frac1{2\gamma} \left( \dfrac{1}{(1+ \delta)} + (1+\delta)\right).
\]
Hence, 
\[
\begin{aligned}
 \hat u (x) =&~{}  \hat u (x=\beta t+ \rho(t)) \cosh^{-\nu_0} (x-\beta t -\rho(t)) \\
 &~{} + \cosh^{-\nu_0} (x-\beta t -\rho(t)) \int_{\beta t + \rho(t)}^{x}  f(s) \cosh^{\nu_0} (s-\beta t -\rho(t)) \, ds.
\end{aligned} 
\]
Solving for $\hat s$ is also direct. Consequently, since from \eqref{Q} we have 
$\widetilde Q_x(x;\beta,\beta t+ \rho(t)) = 2\ga\sech(\ga(x-\beta t -\rho(t)))$, we conclude that from the orthogonality condition in \eqref{a113} we have $\hat u (x=\beta t+ \rho(t))$ uniquely determined. The rest of the proof is direct (as in Lemma \ref{Implicit}), or as in \cite{MP}.

\subsection{Uniqueness of the lifted data}\label{uniqueness} Now we discuss the uniqueness of the solution found, that is, $(\widetilde{Q},\widetilde Q_t)+ (\hat u,\hat s)$. Since $(y,v)(t)$ is solution to SG \eqref{sg1}, by Lemma \ref{Solutions_Backlund} we have that $(Q, Q_t)(t)+ (\hat u,\hat s)(t)$ also solves SG. The initial data associated to this solution is
\[
(Q, Q_t)(x;\beta,0)+ (\hat u,\hat s)(0),
\]
since $\rho(0)=0$ (see \eqref{rho(0)}). Now, recall that $(y,v)(t=0)=(y_0,v_0)=(y_0,0)$, and by uniqueness associated to the Implicit Function (Lemma \ref{Implicit}), we know that $(\widetilde u_0,\widetilde s_0)=\Phi(y_0,0,\delta)$. Consequently,
\[
\begin{aligned}
(Q, Q_t)(x;\beta,0)+ (\hat u,\hat s)(0)= (Q^0,0) + (\widetilde u_0, \widetilde s_0),
\end{aligned}
\]
and therefore $(\hat u,\hat s)(0)=  ((Q^0,0) -(Q,Q_t)(\cdot, \beta,0))+ (\widetilde u_0, \widetilde s_0)=(\hat u_0, \hat s_0)$ (see \eqref{Initial_data_2} and \eqref{Initial_data_3}).

\medskip

The previous argument guarantees that the lifted data $(Q, Q_t)(t)+ (\hat u,\hat s)(t)$ is just $(Q, Q_t)(t)+ (\widetilde u,\widetilde s)(t)$ as in Lemma \ref{Ortho}. Moreover, we have also proved that the kink is orbitally stable (see earlier results by \cite{HPW,HW}).

\medskip

\section{Estimates on the shift}\label{10}

In this section we prove further estimates on the shift $\rho(t)$ for the case $\beta=0$ that will allow us to prove the remaining parts in Theorem  \ref{MT0}. Our aim is to prove Corollary \ref{quadratic_rho_prop_2}, which relates $\rho'(t)$ with exponentially weighted, quadratic integrals only depending on $(y,y_x,v)$. We start out with the following mixed estimate. 

\begin{lem}\label{quadratic_rho_prop}
Assume $\beta=0$. Under the hypothesis of Lemma \ref{Ortho}, for all $t\in\R$ the following bound holds: for any $\varepsilon>0$ small, 
\begin{align}\label{dot_rho_quadratic}
\vert \rho'(t)\vert\lesssim \int e^{-(1+\varepsilon)\vert x-\rho(t)\vert}(\widetilde u^2+\widetilde{u}_x^2)(t,x)dx+\int e^{-(1-\varepsilon)\vert x-\rho(t)\vert}(y^2+y_x^2)(t,x)dx,
\end{align}
with implicit constant independent of time and $\widetilde u$, $y$.
\end{lem}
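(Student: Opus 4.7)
The plan is to derive an equation for $\rho'(t)$ via modulation and then bound it using the B\"acklund structure. First, I would differentiate in time the orthogonality \eqref{Ortho2}, which for $\beta = 0$ simplifies to $\int \widetilde u(t,x) \widetilde Q_x(x;0,\rho(t))\,dx = 0$ since $\widetilde Q_{t,x} \equiv 0$ when $\beta = 0$. Using \eqref{eqn_us} and $\partial_t \widetilde Q_x(x;0,\rho(t)) = -\rho'(t)\widetilde Q_{xx}(x;0,\rho(t))$, this yields
\[
\rho'(t)\Bigl(\int \widetilde Q_x^2\,dx - \int \widetilde u\,\widetilde Q_{xx}\,dx\Bigr) = -\int \widetilde s(t,x)\, \widetilde Q_x(x;0,\rho(t))\,dx.
\]
Since $\int \widetilde Q_x^2\,dx = 8$ and $|\int \widetilde u\,\widetilde Q_{xx}| \lesssim \|\widetilde u\|_{L^2} \lesssim \eta$, the bracket is bounded below by a positive constant for $\eta$ small, so $|\rho'(t)| \lesssim |\int \widetilde s\,\widetilde Q_x|$.

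Next, I would substitute the B\"acklund expression for $\widetilde s$ from Section \ref{9}: equation \eqref{a112} with $\beta = 0$, $\delta = 0$ gives
\[
\widetilde s = y_x - 2\sin\Bigl(\tfrac{\widetilde Q + \widetilde u}{2}\Bigr)\sin(y/2).
\]
Using the explicit identities $\sin(\widetilde Q/2) = \tanh(x-\rho)$, $\cos(\widetilde Q/2) = \sech(x-\rho)$ (compare with \eqref{coseno}), $\widetilde Q_x = 2\sech(x-\rho)$, and $\widetilde Q_{xx} = \sin Q = -2\sech(x-\rho)\tanh(x-\rho)$, I expand the second term by angle-addition and Taylor expansion in $\widetilde u$ and $y$. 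Integration by parts in the $y_x$ piece gives $\int y_x\widetilde Q_x = -\int y\,\widetilde Q_{xx} = 2\int y\sech\tanh\,dx$. The key observation is that this contribution exactly cancels the linear-in-$y$ part of $-4\int \tanh\cos(\widetilde u/2)\sin(y/2)\sech\,dx$, leaving
\[
-\int \widetilde s\,\widetilde Q_x = \int \sech^2(x-\rho)\,\widetilde u\, y\,dx + R(\widetilde u, y),
\]
where $R$ collects only cubic or higher order terms in $(\widetilde u, y)$. This cancellation is an algebraic consequence of the fact that the kink itself is a B\"acklund transform of the vacuum with $a = 1$ (Lemma \ref{prkk1}), which encodes precisely the identity $\widetilde Q_{xx} = \sin Q$.

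For the main quadratic term, Young's inequality $|\widetilde u y|\le\tfrac12(\widetilde u^2+y^2)$ combined with the pointwise bounds $\sech^2(x-\rho) \le C_\varepsilon e^{-(1+\varepsilon)|x-\rho|}$ (valid since $2 > 1+\varepsilon$) and $\sech^2(x-\rho) \le C_\varepsilon e^{-(1-\varepsilon)|x-\rho|}$ immediately yields
\[
\Bigl|\int \sech^2(x-\rho)\, \widetilde u\, y\,dx \Bigr| \leq C_\varepsilon \int e^{-(1+\varepsilon)|x-\rho|}\widetilde u^2\,dx + C_\varepsilon \int e^{-(1-\varepsilon)|x-\rho|} y^2\,dx,
\]
which is already of the claimed form. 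The remainder $R$ consists of integrals of type $\int \sech^k(x-\rho) P(\widetilde u, y)\,dx$ with $P$ a monomial of degree $\ge 3$; using $\|\widetilde u\|_\infty + \|y\|_\infty \lesssim \eta$ (Sobolev) each such term gains an extra smallness factor and can be controlled by the same weighted quadratic form as above, after appropriate Cauchy-Schwarz between $\sech^k$ and the asymmetric exponential weights and, when needed, one integration by parts against $\partial_x\tanh(x-\rho) = \sech^2(x-\rho)$ (which introduces the $\widetilde u_x^2$ term on the right-hand side of \eqref{dot_rho_quadratic}).

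The main obstacle is establishing the cancellation of the linear-in-$y$ terms in the second step, because without it one is only able to obtain a bound of the square-root form $|\rho'| \lesssim (\int e^{-(1-\varepsilon)|x-\rho|} y^2\,dx)^{1/2}$ rather than the claimed quadratic form. The asymmetry of the weights — $(1+\varepsilon)$ on $\widetilde u$-terms versus $(1-\varepsilon)$ on $y$-terms — reflects the fact that $\widetilde u$ is cross-multiplied against the strongly localized factor $\sech^2(x-\rho)$, whereas the $y$-side only picks up the weaker $\sech$-type localization from the cubic corrections.
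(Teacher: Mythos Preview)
Your proposal is correct and follows essentially the same route as the paper: derive the modulation identity $\rho'\big(\int Q'^2-\int\widetilde u\,Q''\big)=-\int\widetilde s\,Q'$, substitute the B\"acklund expression $\widetilde s=y_x-2\sin\big(\tfrac{\widetilde Q+\widetilde u}{2}\big)\sin(y/2)$, and exploit the cancellation between $\int y_x Q'$ and the linear-in-$y$ part of the trigonometric term (coming from $Q''=-\sin(\widetilde Q/2)\,Q'$). The only organizational difference is that the paper, instead of Taylor-expanding, integrates $2\int Q''\cos(\widetilde u/2)\sin(y/2)$ by parts to obtain the exact identity
\[
\int\big(1-\cos(\tfrac{\widetilde u}{2})\cos(\tfrac{y}{2})\big)Q'y_x+\int\sin(\tfrac{\widetilde u}{2})\sin(\tfrac{y}{2})Q'\widetilde u_x,
\]
which is where the $\widetilde u_x^2$ and $y_x^2$ terms in \eqref{dot_rho_quadratic} actually enter; your Taylor-expansion version would in fact yield the bound without those derivative terms, so your hedge about ``when needed, one integration by parts'' is unnecessary.
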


\begin{rem}[About quadratic estimates and convergence]\label{convergencia_1}
Estimate \eqref{dot_rho_quadratic} reveals that, under the orthogonality condition \eqref{Ortho2} in the case $\beta=0$, the derivative of $\rho(t)$ is of quadratic order in  $\widetilde u$ and $y$, a fact that should imply that $\rho(t)$ may converge as $t\to +\infty$. Unfortunately, there is no simple relationship between the weight $e^{-(1-\varepsilon)\vert x-\rho(t)\vert}$ and the dynamics of $(y^2+y_x^2)(t,x)$. This lack of evident connection for data only in the energy space makes the proof of convergence for $\rho(t)$ harder than usual. In any case, we are able to show in this paper (Theorem \ref{MT0}) that either $\rho(t_n)$ diverges for some sequence $t_n\to +\infty$, or it converges to a final state $\bar \rho.$ In both cases, a portion of the radiation term $\widetilde u$ \emph{converges in the energy space}, on compact sets. 
\end{rem}

\begin{rem}[Conditional convergence]\label{convergencia_2}
A conditional result for convergence of $\rho(t)$ in every possible case is the following: if the data $(y_0,0)\in \mathcal M_{\eta,0} \subseteq H_o^1\times L^2_{e}$ is such that the solution of SG $(y,v)(t)\in H^1_o\times L^2_o$ with initial data at $t=0$ given by $(y_0,0)$ satisfies
\[
 \int_0^\infty \!\!\! \int e^{-(1-\varepsilon)\vert x-\rho(t)\vert}(y^2+y_x^2)(t,x)dxdt <+\infty,
\]
then $\rho(t)\to \bar\rho \in\R.$ This result will transpire from the proof of Theorem \ref{MT0}.
\end{rem}

\begin{proof}[Proof of Lemma \ref{quadratic_rho_prop}]
Recall that $Q_t=0$ in the $\beta=0$ case, see \eqref{Qt}.  In order to prove \eqref{dot_rho_quadratic} we multiply \eqref{eqn_us} by $Q'$ and integrate in space, from where we obtain
\be\label{primera_rho}
\int \widetilde u_tQ'=\int \widetilde sQ'+\rho'\int Q'^2.
\ee
Now notice that, due to the orthogonality condition and the uniformly smallness of $u(t,x)$ we have
\[
c\leq \left\vert \int Q'^2-\int\widetilde u_tQ'\right\vert \leq C,
\]
for some positive constants $c,C>0$. Therefore, using \eqref{backlund_s_simplified} in \eqref{primera_rho} we conclude 
\be\label{intermediario}
\vert \rho' (t)\vert\lesssim \left\vert\int y_xQ'-2\int Q'\left(\cos\left(\dfrac{\widetilde{Q}}{2}\right)\sin\left(\dfrac{\widetilde u}{2}\right) + \sin\left(\dfrac{\widetilde{Q}}{2}\right)\cos\left(\dfrac{\widetilde u}{2}\right) \right) \sin\left(\dfrac{y}{2}\right)\right\vert.
\ee
On the other hand, notice that by using $Q'=-2\cos(\widetilde{Q}/2)$ and basic trigonometric identities we can rewrite the first and last term on the right-hand side of the latter inequality as
\begin{align*}
\left\vert\int y_xQ'-2Q'\sin\left(\dfrac{\widetilde Q}{2}\right)\cos\left(\dfrac{\widetilde u}{2}\right)\sin\left(\dfrac{y}{2}\right)\right\vert=\left\vert \int y_xQ'+2\int \cos\left(\dfrac{\widetilde{u}}{2}\right)\sin\left(\dfrac{y}{2}\right)Q''\right\vert,
\end{align*}
which, by integration by parts, is equivalent to
\be\label{milagro}
\left\vert \int \left(1-\cos\left(\dfrac{\widetilde{u}}{2}\right)\cos\left(\dfrac{y}{2}\right)\right)Q'y_x+\int\sin\left(\dfrac{\widetilde{u}}{2}\right)\sin\left(\dfrac{y}{2}\right)Q'\widetilde{u}_x\right\vert.
\ee
Finally, we recall that by basic trigonometric bounds together with the uniform smallness of the functions involved we have
\[
0\leq 1-\cos\left(\dfrac{\widetilde{u}}{2}\right)\cos\left(\dfrac{y}{2}\right)\lesssim \widetilde{u}^2+y^2 \quad \hbox{and}\quad  \left| \sin\left(\dfrac{\widetilde{u}}{2}\right)\sin\left(\dfrac{y}{2}\right) \right| \lesssim \vert \widetilde{u} y\vert.
\]
Therefore, plugging together the last identities and by using $ab\leq 2a^2+2b^2$ we conclude
\be\label{SG_maldad}
\begin{aligned}
&\left\vert\int y_xQ'-2Q'\sin\left(\dfrac{\widetilde Q}{2}\right)\cos\left(\dfrac{\widetilde u}{2}\right)\sin\left(\dfrac{y}{2}\right)\right\vert
\\ & \qquad \lesssim \int e^{-(1-\varepsilon)\vert x-\rho(t)\vert}(y^2+y_x^2)+\int e^{-(1+\varepsilon)\vert x-\rho(t)\vert}(\widetilde{u}^2+\widetilde {u}_x^2),
\end{aligned}
\ee
for any $0<\varepsilon\ll1$ small. Notice that, in the same way as before, and by using the fact that $\cos \widetilde{Q}/2\lesssim \sech(x-\rho(t))$, we also conclude in \eqref{intermediario} that
\[
\left\vert \int \cos\left(\dfrac{\widetilde{Q}}{2}\right)\sin\left(\dfrac{\widetilde{u}}{2}\right)\sin\left(\dfrac{y}{2}\right)Q'\right\vert\lesssim \int e^{-2\vert x-\rho(t)\vert}(\widetilde u^2+y^2),
\]
which concludes the proof of the lemma.
\end{proof}

\begin{rem}
Note that the nice cancelation produced in \eqref{milagro} is essentially due to the fact that \eqref{calculo interesante} holds precisely under the orthogonality condition \eqref{Ortho2} in the case $\beta=0$. 
\end{rem}

Now our objective is to eliminate the term in $\widetilde u_x^2$ in \eqref{dot_rho_quadratic}.

\begin{lem}
Assume $\beta=0$. Under the assumptions of Lemma \ref{Ortho}, for any $0<\varepsilon\ll1$ small and all $t\in\R$ the following bound holds
\begin{align}\label{quadratic_u_yv}
\int e^{-(1+\varepsilon)\vert x-\rho(t)\vert}\widetilde{u}_x^2\lesssim \int e^{-(1+\varepsilon)\vert x-\rho(t)\vert}(\widetilde{u}^2+y^2+v^2).
\end{align}
\end{lem}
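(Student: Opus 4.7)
The plan is to exploit the first B\"acklund transformation \eqref{a111} as a pointwise algebraic identity for $\widetilde u_x$, and then square and integrate against the exponential weight. In the zero-momentum case $\beta = 0$, $\delta = 0$ (so $a = 1$ in \eqref{a(beta)}), by the uniqueness argument of Subsection \ref{uniqueness} one may identify $(\hat u,\hat s) = (\widetilde u, \widetilde s)$, so \eqref{a111} reads
\[
\widetilde Q_x + \widetilde u_x - v = \cos\left(\tfrac{\widetilde Q + \widetilde u + y}{2}\right) + \cos\left(\tfrac{\widetilde Q + \widetilde u - y}{2}\right).
\]
Applying the sum-to-product identity $\cos A + \cos B = 2\cos(\tfrac{A+B}{2})\cos(\tfrac{A-B}{2})$, using Lemma \ref{prkk1} together with $\sin(Q/2) = \cos(\widetilde Q/2)$ to write $\widetilde Q_x = 2\cos(\widetilde Q/2)$, and expanding $\cos(\tfrac{\widetilde Q + \widetilde u}{2})$ via the cosine addition formula, one arrives at the pointwise identity
\[
\widetilde u_x = v + 2\cos\left(\tfrac{\widetilde Q}{2}\right)\left[\cos\left(\tfrac{\widetilde u}{2}\right)\cos\left(\tfrac{y}{2}\right) - 1\right] - 2\sin\left(\tfrac{\widetilde Q}{2}\right)\sin\left(\tfrac{\widetilde u}{2}\right)\cos\left(\tfrac{y}{2}\right).
\]

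Next, I would bound each term pointwise using the explicit forms $\cos(\widetilde Q/2) = \sech(x-\rho(t))$ and $\sin(\widetilde Q/2) = \tanh(x-\rho(t))$, together with the Taylor bounds $\vert\cos(\tfrac{\widetilde u}{2})\cos(\tfrac{y}{2}) - 1\vert \lesssim \widetilde u^2 + y^2$ and $\vert\sin(\tfrac{\widetilde u}{2})\vert \lesssim \vert\widetilde u\vert$. Squaring gives the pointwise estimate
\[
\widetilde u_x^2 \lesssim v^2 + \sech^2(x - \rho(t))(\widetilde u^2 + y^2)^2 + \widetilde u^2.
\]
By the Sobolev embedding $H^1 \hookrightarrow L^\infty$ in one dimension, together with the a priori smallness $\Vert\widetilde u\Vert_{H^1} + \Vert y\Vert_{H^1} \lesssim \eta$ provided by Lemma \ref{Ortho} and the orbital stability of the kink \cite{HPW,HW}, one has the pointwise bound $\widetilde u^2 + y^2 \lesssim \eta^2$, which absorbs the quartic remainder into $\eta^2\,\sech^2(x-\rho(t))(\widetilde u^2 + y^2)$.

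Finally, I would multiply the resulting pointwise inequality by $e^{-(1+\varepsilon)\vert x-\rho(t)\vert}$ and integrate over $\R$, which yields \eqref{quadratic_u_yv} directly since $\sech^2$ is uniformly bounded and $\eta \lesssim 1$. The main subtle point, rather than an obstacle, is that the term $-2\sin(\widetilde Q/2)\sin(\widetilde u/2)\cos(y/2)$ carries \emph{no} spatial decay (because $\sin(\widetilde Q/2) = \tanh(x-\rho(t))$ is uniformly bounded but non-localized), and thus contributes the $\eta$-independent quantity $\widetilde u^2$ to the right-hand side; this is harmless precisely because $\widetilde u^2$ already appears in the target bound \eqref{quadratic_u_yv}, so no further cancellation, integration by parts, or Hardy-type argument is required.
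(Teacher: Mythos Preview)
Your proof is correct and follows essentially the same approach as the paper: both derive the pointwise identity for $\widetilde u_x$ from the first B\"acklund equation \eqref{a111_new} via the sum-to-product formula (the paper's \eqref{CpC}) together with $\widetilde Q_x = 2\cos(\widetilde Q/2)$, and then bound the trigonometric terms exactly as you do before squaring and integrating against the weight. Your explicit discussion of the non-localized $\sin(\widetilde Q/2)$ term and its harmless contribution of $\widetilde u^2$ makes transparent what the paper compresses into ``bounding in the same way as in the proof of Lemma \ref{quadratic_rho_prop}.''
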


\begin{proof}
To show \eqref{quadratic_u_yv} it is enough to notice that by using identity \eqref{CpC} and plugging it into equation \eqref{a111_new} we deduce 
\[
\widetilde{u}_x=v-\sin\left(\dfrac{\widetilde{Q}}{2}\right)\sin\left(\dfrac{\widetilde{u}}{2}\right)\cos\left(\dfrac{y}{2}\right)-2\left(1-\cos\left(\dfrac{\widetilde{u}}{2}\right)\cos\left(\dfrac{y}{2}\right)\right)\cos\left(\dfrac{\widetilde{Q}}{2}\right).
\]
Therefore, bounding in the same way as in the proof of Lemma \ref{quadratic_rho_prop} we obtain \begin{align*}
\int e^{-(1+\varepsilon)\vert x-\rho(t)\vert}\widetilde{u}_x^2\lesssim\int e^{-(1+\varepsilon)\vert x-\rho(t)\vert} (\widetilde{u}^2+y^2+v^2),
\end{align*}
which concludes the proof.
\end{proof}

The final step to prove Corollary \ref{quadratic_rho_prop_2} is a control of $\widetilde u^2$ in terms of $(y^2+y_x^2+v^2)$, with a reasonable loss in the decaying exponentials involved in the weighted norms.

\begin{lem}\label{10p3}
Under the assumptions of Lemma \ref{Ortho}, for any $0<\varepsilon\ll1$ small and all $t\in\R$ the following bound holds
\be\label{10p3_1}
\int \widetilde{u}^2(t,x)\sech^{1+\varepsilon}(x-\rho(t))dx\lesssim\int (y^2+y_x^2+v^2)(t,x)\sech^{1-\varepsilon}(x-\rho(t))dx,
\ee
with implicit constant independent of $t$ and $\widetilde u, y, v$.
\end{lem}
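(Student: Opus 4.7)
The plan is to extract from the B\"acklund equation \eqref{a111} (specialized to $\beta=0$, $\delta=0$, as in $\mathcal{M}_{\eta,0}$) a first-order linear ODE in $x$ for $\widetilde u$, and invert it by exploiting the orthogonality condition of Lemma \ref{Ortho}. Setting $z = x-\rho(t)$, using $\widetilde Q_x = 2\cos(\widetilde Q/2)$ and $\sin(\widetilde Q/2) = \tanh(z)$, and Taylor-expanding $2\sin(\widetilde u/2)\cos(y/2) = \widetilde u + O(\widetilde u^3 + \widetilde u y^2)$ together with $\cos(\widetilde u/2)\cos(y/2) - 1 = O(\widetilde u^2 + y^2)$, I rewrite \eqref{a111} as
\[
\widetilde u_x + \tanh(z)\,\widetilde u = G(x,t), \qquad |G| \lesssim |v| + \sech(z)(\widetilde u^2 + y^2) + \eta^2|\widetilde u|.
\]

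The key observation is that the operator $L = \partial_x + \tanh(z)$ has a one-dimensional $L^2$-kernel spanned by $\sech(z)$, while the orthogonality \eqref{Ortho2} reduces for $\beta=0$ (where $\widetilde Q_t\equiv 0$ and $\widetilde Q_x = 2\sech(z)$) to $\int \widetilde u\,\sech(z)\,dx = 0$, i.e.\ orthogonality to exactly this kernel. Using the integrating factor $\cosh(z)$ and variation of parameters, I write
\[
\widetilde u(x) = \sech(z)\,\widetilde u(\rho(t)) + \sech(z)\int_{\rho(t)}^x \cosh(s-\rho(t))\, G(s,t)\,ds,
\]
substitute into the orthogonality, use $\int \sech^2(z)\,dx=2$, and collapse the resulting double integral by Fubini to obtain the closed formula $\widetilde u(\rho(t)) = -\tfrac12 \int_\R \sgn(s-\rho(t))\,G(s,t)\,e^{-|s-\rho(t)|}\,ds$.

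The next step is the weighted inversion bound $\int \widetilde u^2\,\sech^{1+\varepsilon}(z)\,dx \lesssim \int G^2\,\sech^{1+\varepsilon}(z)\,dx$. For the homogeneous piece $\sech(z)\,\widetilde u(\rho(t))$, this follows from Cauchy-Schwarz on the closed formula (splitting $e^{-|s-\rho|}$ as $e^{-(1-\varepsilon)|s-\rho|/2}\cdot e^{-(1+\varepsilon)|s-\rho|/2}$), after which the integral of $\sech^{3+\varepsilon}(z)$ is a harmless finite constant. For the particular part, I split at $x=\rho(t)$; on $\{x>\rho\}$, using $\sech(z)\le 2e^{-(x-\rho)}$ and $\cosh(s-\rho)\le e^{s-\rho}$ together with an exponential-splitting Cauchy-Schwarz and Fubini yields the desired bound, and $\{x<\rho\}$ follows by symmetry.

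The final step is to insert the pointwise bound for $G$. The contribution $\sech^2(z)(\widetilde u^4 + y^4) \lesssim \eta^2\sech^2(z)(\widetilde u^2 + y^2)$ integrated against $\sech^{1+\varepsilon}(z)$ gives $\eta^2\int \sech^{3+\varepsilon}(z)(\widetilde u^2+y^2)\le \eta^2\int \sech^{1+\varepsilon}(z)(\widetilde u^2+y^2)$, while the $\eta^4\widetilde u^2$ term contributes $\eta^4\int \widetilde u^2\sech^{1+\varepsilon}(z)$; both $\widetilde u^2$ pieces are absorbed into the left-hand side for $\eta$ small. The surviving estimate $\int \widetilde u^2\sech^{1+\varepsilon}(z)\,dx \lesssim \int (v^2 + y^2)\sech^{1+\varepsilon}(z)\,dx$ implies the claim, since $\sech^{1+\varepsilon}\le \sech^{1-\varepsilon}$ and the $y_x^2$ term on the right of \eqref{10p3_1} is nonnegative. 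The main obstacle is the nontrivial one-dimensional kernel of $L$, which rules out a direct energy/monotonicity estimate on the ODE and forces us to exploit \eqref{Ortho2} through the explicit inversion formula; the mechanism that makes this work is the matching between the kernel generator $\sech(z)$ and the orthogonality weight $\widetilde Q_x \propto \sech(z)$ forced by the modulation equation \eqref{Ortho2}.
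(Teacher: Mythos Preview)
Your argument is correct and follows essentially the same strategy as the paper's proof: rewrite \eqref{a111} as the linear ODE $\widetilde u_x+\tanh(x-\rho)\widetilde u=G$ with a source $G$ that is quadratic in $(\widetilde u,y)$ plus $v$, invert via the integrating factor $\cosh(x-\rho)$, use the orthogonality \eqref{Ortho2} (which for $\beta=0$ reduces to $\int \widetilde u\,\sech(x-\rho)\,dx=0$) to control the free constant, and then absorb the $\widetilde u^2$ contributions coming from $G^2$ by smallness.

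The only noteworthy differences are in packaging. First, you compute the closed formula $\widetilde u(\rho(t))=-\tfrac12\int \sgn(s-\rho)\,G(s)\,e^{-|s-\rho|}\,ds$ by Fubini, whereas the paper simply bounds $b(t)^2$ by inserting the particular solution back into the orthogonality relation and applying Cauchy--Schwarz; both lead to the same control $|b(t)|^2\lesssim \int G^2\sech^{1+\varepsilon}$. Second, your weighted inversion keeps the weight $\sech^{1+\varepsilon}$ on $G^2$, while the paper's Cauchy--Schwarz splitting (factoring out $\cosh^3$ inside the square) lands on the slightly coarser weight $\sech$; your route is in fact marginally sharper, yielding $\int \widetilde u^2\sech^{1+\varepsilon}\lesssim \int (v^2+y^2)\sech^{1+\varepsilon}$, which already implies \eqref{10p3_1} without ever needing the $y_x^2$ term or the relaxation to $\sech^{1-\varepsilon}$. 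These are refinements of execution rather than a different idea.
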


\begin{proof}
We start by rewriting the B\"acklund equation for $\widetilde{u}_x(t,x)$ in a more convenient form. In particular, notice that \eqref{a111_new} is equivalent to 
\begin{align*}
\widetilde{u}_x+\sin\left(\dfrac{\widetilde{Q}}{2}\right)\widetilde{u}=\widetilde{F}(t,x),
\end{align*}
where $\widetilde{F}(t,x)$ is given by 
\begin{align*}
\widetilde{F}(t,x)&:=v-2\cos\left(\dfrac{\widetilde{Q}}{2}\right)\left(1-\cos\left(\dfrac{\widetilde{u}}{2}\right)\cos\left(\dfrac{y}{2}\right)\right)+2\left(\dfrac{\widetilde{u}}{2}-\sin\left(\dfrac{\widetilde{u}}{2}\right)\cos\left(\dfrac{y}{2}\right)\right)\sin\left(\dfrac{\widetilde Q}{2}\right)
\\ & =: v+\mathrm{I}+\mathrm{II}.
\end{align*}
Therefore, by solving the ODE and due to the fact that $\sin\big(\tfrac{\widetilde{Q}}{2}\big)=\tanh(x-\rho(t))$ we obtain
\be\label{tilde_u_eqn}
\widetilde{u}(t,x)=b(t)\sech(x-\rho(t))+\sech(x-\rho(t))\int_{\rho(t)}^x\cosh(z-\rho(t))\widetilde{F}(t,z)dz.
\ee
Notice that direct computations and Cauchy-Schwarz's inequality yield us to 
\begin{align}
& \int \widetilde{u}^2\sech^{1+\varepsilon}(x-\rho(t))dx \nonumber \\
&\lesssim b^2(t)+\int \sech^{3+\varepsilon}(x-\rho(t))\left(\int_{\rho(t)}^x\cosh(z-\rho(t))\widetilde{F}(t,z)dz\right)^2dx \label{444}
\\
& \lesssim b^2(t)+\int \sech^{3+\varepsilon}(x-\rho(t))\left(\int_{\rho(t)}^x\cosh^3(z-\rho(t))dz\right)\left(\int_{\rho(t)}^x\sech(z-\rho(t))\widetilde{F}^2(t,z)dz\right)dx
\nonumber \\
 & \lesssim b^2(t)+\int\sech(x-\rho(t))\widetilde{F}^2(t,x)dx.\nonumber 
\end{align}
Now we claim that there exists $0<\delta\ll1$ such that 
\be\label{555}
\int \sech(x-\rho(t))\widetilde{F}^2(t,x)\lesssim\int \sech^{1-\varepsilon}(x-\rho(t))(y^2+y_x^2+v^2)+\delta\int\sech^{1+\varepsilon}(x-\rho(t))\widetilde{u}^2.
\ee
In fact, by using Cauchy-Schwarz's inequality we obtain
\begin{align*}
\int \vert \mathrm{I}\vert^2\sech(x-\rho(t))&\lesssim \int \sech^4(x-\rho(t))(\widetilde{u}^4+y^4)
\\ & \lesssim \sup_{t\in\R}\Vert y(t)\Vert_{H^1}^2\int\sech(x-\rho(t))y^2
\\ & \quad + \sup_{t\in\R}\Vert \widetilde u(t)\Vert_{H^1}^2\int\sech^{1+\varepsilon}(x-\rho(t))\widetilde{u}^2.
\end{align*}
On the other hand, by standard trigonometric inequalities and the uniform smallness of the functions involved we have \begin{align*}
\left\vert\dfrac{\widetilde{u}}{2}-\sin\left(\dfrac{\widetilde{u}}{2}\right)\cos\left(\dfrac{y}{2}\right)\right\vert\lesssim \vert \widetilde{u}\vert y^2,
\end{align*}
and hence, by using again $2ab\leq a^2+b^2$ we conclude 
\begin{align*}
\int\vert \mathrm{II}\vert^2 \sech(x-\rho(t))&\lesssim \sup_{t\in\R}\Vert u(t)\Vert_{H^1}^2\sup_{t\in\R}\Vert y(t)\Vert_{H^1}^2\int\sech^{1+\varepsilon}(x-\rho(t))\widetilde{u}^2
\\ & \quad +\sup_{t\in\R}\Vert y(t)\Vert_{H^1}^4\int \sech^{1-\varepsilon}(x-\rho(t))y^2,
\end{align*}
what concludes the claim. Finally, we are lead to estimate $b(t)$. 

\medskip

Using \eqref{Ortho2} in the case $\beta=0$, and \eqref{Qx}-\eqref{Qt} (recall that $\ga=1$), we have in \eqref{tilde_u_eqn}
\[
|b(t)|^2 \lesssim  \left(\int \sech^2(x-\rho(t)) \left| \int_{\rho(t)}^x\cosh(z-\rho(t))\widetilde{F}(t,z)dz \right|dx \right)^2.
\]
By Cauchy-Schwarz,
 \be\label{SG_diablo_2}
|b(t)|^2 \lesssim  \int \sech^{4-}(x-\rho(t)) \left| \int_{\rho(t)}^x\cosh(z-\rho(t))\widetilde{F}(t,z)dz \right|^2 dx .
\ee
Proceeding as in \eqref{444}, we conclude 
 \[
|b(t)|^2 \lesssim \int\sech(x-\rho(t))\widetilde{F}^2(t,x)dx .
\]
Gathering this last estimate and \eqref{555}, we conclude.
\end{proof}

We finish this section gathering the main result concerning the quadratic behavior of $\rho'(t)$ in the case $\beta=0$.

\begin{cor}\label{quadratic_rho_prop_2}
Assume $\beta=0$. Under the assumptions of Lemma \ref{Ortho}, for all $t\in\R$ the following bound holds: for any $0<\varepsilon\ll1$ small and fixed, 
\begin{align}\label{dot_rho_quadratic_2}
\vert \rho'(t)\vert\lesssim \int e^{-(1-\varepsilon)\vert x-\rho(t)\vert}(v^2+y^2+y_x^2)(t,x)dx,
\end{align}
with implicit constant independent of time and $(y,v)$.
\end{cor}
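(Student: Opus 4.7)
The plan is to combine the three preceding estimates in a chain, using each one to eliminate a term we cannot directly control in favor of terms that are acceptable. Starting from the mixed bound \eqref{dot_rho_quadratic} in Lemma \ref{quadratic_rho_prop}, the only troublesome contributions are $\widetilde u^2$ and $\widetilde u_x^2$, since $y^2+y_x^2$ already appears in the desired form. Estimate \eqref{quadratic_u_yv} then removes $\widetilde u_x^2$ (keeping the same weight $e^{-(1+\varepsilon)|x-\rho(t)|}$) at the price of introducing $v^2$ and $\widetilde u^2$, both of which are either acceptable or need a further reduction. Finally, Lemma \ref{10p3} converts the remaining weighted $\widetilde u^2$ integral into an integral of $y^2+y_x^2+v^2$, at the cost of slightly degrading the exponential weight from $1+\varepsilon$ to $1-\varepsilon$.

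Concretely, I would proceed as follows. First, apply \eqref{dot_rho_quadratic} to obtain
\[
\vert \rho'(t)\vert\lesssim \int e^{-(1+\varepsilon)\vert x-\rho(t)\vert}(\widetilde u^2+\widetilde u_x^2)\,dx+\int e^{-(1-\varepsilon)\vert x-\rho(t)\vert}(y^2+y_x^2)\,dx.
\]
Second, substitute \eqref{quadratic_u_yv} into the first integral to get
\[
\int e^{-(1+\varepsilon)\vert x-\rho(t)\vert}(\widetilde u^2+\widetilde u_x^2)\,dx \lesssim \int e^{-(1+\varepsilon)\vert x-\rho(t)\vert}(\widetilde u^2+y^2+v^2)\,dx.
\]
Third, since $e^{-(1\pm\varepsilon)|x-\rho(t)|}$ is comparable (up to absolute constants) to $\sech^{1\pm\varepsilon}(x-\rho(t))$, apply Lemma \ref{10p3} to dominate the remaining $\widetilde u^2$ piece by a weighted integral of $y^2+y_x^2+v^2$ with the relaxed weight $e^{-(1-\varepsilon)|x-\rho(t)|}$. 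Using that $e^{-(1+\varepsilon)|x-\rho(t)|}\leq e^{-(1-\varepsilon)|x-\rho(t)|}$ pointwise, all surviving terms can be bundled under a single exponential weight with rate $1-\varepsilon$, which yields \eqref{dot_rho_quadratic_2}.

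No step here is genuinely hard — the real work was carried out in Lemma \ref{quadratic_rho_prop}, estimate \eqref{quadratic_u_yv} and Lemma \ref{10p3}. The only mild bookkeeping point is to fix a single small $\varepsilon>0$ at the outset and verify that the loss in the exponential rate accumulates only at the final step (when converting $\widetilde u^2$ with weight $1+\varepsilon$ to $y^2+y_x^2+v^2$ with weight $1-\varepsilon$), so that the implicit constants remain uniform in $t$. Since $\|\widetilde u(t)\|_{H^1}+\|(y,v)(t)\|_{H^1\times L^2}\lesssim \eta$ uniformly in time by orbital stability and by the choice of initial data in $\mathcal M_{\eta,0}$, the constants in Lemma \ref{10p3} are independent of $t$, which is exactly what is needed to close the argument.
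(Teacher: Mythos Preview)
Your proposal is correct and matches the paper's approach: the corollary is stated without proof precisely because it is the direct concatenation of Lemma \ref{quadratic_rho_prop}, estimate \eqref{quadratic_u_yv}, and Lemma \ref{10p3}, exactly as you chain them. Your observations on the weight comparison $e^{-(1\pm\varepsilon)|x-\rho(t)|}\sim \sech^{1\pm\varepsilon}(x-\rho(t))$ and the pointwise inequality $e^{-(1+\varepsilon)|\cdot|}\le e^{-(1-\varepsilon)|\cdot|}$ are the only bookkeeping details needed, and you handle them correctly.
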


\medskip

This result proves \eqref{eta2} in Theorem \ref{MT0}, part (2). It only remains to show part (2), equations \eqref{final_data0_0} and \eqref{final_data0_1}.

\medskip

\section{End of proof of Theorem \ref{MT0}}\label{11}

In this Section we prove Theorem \ref{MT0}, estimates \eqref{final_data0_0} and \eqref{final_data0_1}. Recall that parts (1) and (3) of this result were already proved in Section \ref{7} and \ref{8}. 

\medskip

Let us consider $\vec\phi_0:= (\phi_0,\phi_1)$ data belonging to the manifold $\mathcal M_\eta$ in \eqref{Manifold}. Let also $(\phi(t),\phi_t(t))$ be the unique solution of \eqref{sg1} with initial condition $(\phi,\phi_t)(0)=(\phi_0,\phi_1)$. We restrict ourselves to the zero momentum submanifold $\mathcal M_{\eta,0}$ introduced in \eqref{Manifold0}.  In particular, $\beta=0$ and $\ga=1$ in the kink profile (see \eqref{tildeQ}) 
\[
\widetilde Q(x;\beta,\beta t + \rho(t))=\widetilde Q(x;0, \rho(t)) =Q(x-\rho(t)) -\pi =: \widetilde Q(x-\rho(t)).
\]
Moreover, $\delta=0$ in \eqref{Manifold0}.

\medskip

Let us fix now a compact interval $I$. We divide the proof into several steps. The first two of them concern the limit of the $L^2$-norm of the functions $(\widetilde u,\widetilde s)$. We shall only consider the case  in which $t\to\infty$. Nevertheless, the same proof holds for the case when $t\to-\infty$ up to some obvious modifications.

\medskip

\textbf{Step 1.} First of all consider \eqref{a111}-\eqref{a112} with $\hat u=\widetilde u$ (see the discussion in Subsection \ref{uniqueness}):
\begin{align}
 \widetilde{Q}_x+\widetilde u_{x}- v &= \cos\left(\dfrac{\widetilde{Q}+\widetilde u+y}{2}\right)+  \cos\left(\dfrac{\widetilde{Q}+\widetilde u-y}{2}\right),\label{a111_new}
    \\ \widetilde{Q}_t+\widetilde s-y_{x}&=\cos\left(\dfrac{\widetilde{Q}+\widetilde u+y}{2}\right)-\cos\left(\dfrac{\widetilde{Q}+\widetilde u-y}{2}\right).\label{a112_new}
\end{align} 
Using that 
\[
\begin{aligned}
\cos(A+B+C) =&~{}  \cos A \cos B \cos C - \cos A \sin B \sin C \\
&~{} - \cos B \sin A \sin C - \cos C \sin A \sin B,
\end{aligned}
\]
\noindent 
we obtain
\be\label{CpC}
\begin{aligned}
& \cos\left(\dfrac{\widetilde{Q}+\widetilde u+y}{2}\right)+  \cos\left(\dfrac{\widetilde{Q}+\widetilde u-y}{2}\right) \\
&~{} \qquad = 2\left(\cos\left(\dfrac{\widetilde{Q}}{2}\right)\cos\left(\dfrac{\widetilde u}{2}\right) -\sin\left(\dfrac{\widetilde{Q}}{2}\right)\sin\left(\dfrac{\widetilde u}{2}\right) \right) \cos\left(\dfrac{y}{2}\right),
\end{aligned}
\ee
and
\be\label{CmC}
\begin{aligned}
& \cos\left(\dfrac{\widetilde{Q}+\widetilde u+y}{2}\right)-  \cos\left(\dfrac{\widetilde{Q}+\widetilde u-y}{2}\right)\\
&~{} \qquad = -2\left(\cos\left(\dfrac{\widetilde{Q}}{2}\right)\sin\left(\dfrac{\widetilde u}{2}\right) + \sin\left(\dfrac{\widetilde{Q}}{2}\right)\cos\left(\dfrac{\widetilde u}{2}\right) \right) \sin\left(\dfrac{y}{2}\right).
\end{aligned}
\ee
Both identities will be important in what follows. First of all, from \eqref{CmC} and the fact that $ \widetilde{Q}_t =0$ in \eqref{a112_new} (since $\beta=0$) one has
\begin{align}\label{backlund_s_simplified}
\widetilde s =y_x -2\left(\cos\left(\dfrac{\widetilde{Q}}{2}\right)\sin\left(\dfrac{\widetilde u}{2}\right) + \sin\left(\dfrac{\widetilde{Q}}{2}\right)\cos\left(\dfrac{\widetilde u}{2}\right) \right) \sin\left(\dfrac{y}{2}\right).
\end{align}
Recall the identities (see \cite{MP})
\be\label{sinQ_cosQ}
\sin\left(\frac{\widetilde Q}{2}\right) = \tanh ( x -\rho(t)), \qquad \cos\left(\frac{\widetilde Q}{2}\right) = \sech(x -\rho(t)),
\ee
and 
\be\label{easy}
|\sin x| \leq |x| \quad  \hbox{and} \quad  0\leq 1-\cos x \leq \min\left\{2,\frac12 x^2\right\}\quad  \hbox{(valid for all $x\in\R$).}
\ee
We have a.e. in $\R$
\be\label{QQQ}
|\widetilde s(t,x)| \lesssim |y_x(t,x)|  + |y(t,x)|,
\ee
so that, from \eqref{Integration0} and \eqref{AS_zero},
\begin{align}\label{epsilon_plus_limit}
\int\int e^{-c_1|x|}\widetilde s^2(t,x)dxdt <+\infty, \qquad \lim_{t\to \pm\infty} \|\widetilde s(t)\|_{L^2(I)} =0,
\end{align}
for any compact interval $I$. This proves the second component part of Theorem \ref{MT0} in \eqref{final_data0_0} and \eqref{final_data0_1} 
(note that no particular sequence of times $t_n\to +\infty$ is needed in this case).

\medskip

\textbf{Step 2.} In the case $\beta=0$, we have from \eqref{Qt} and  \eqref{a113}
\[
\int \widetilde u Q'(x-\rho(t))=0.
\]
Taking derivative and using \eqref{eqn_us},
\[
\rho' \left( \int Q'^2 + \int \widetilde u Q'' (x-\rho(t))\right) =-\int \widetilde s Q'(x-\rho(t)).
\]
On the other hand, from the conservation of (zero) momentum, we have
\[
0= \int (Q'(x-\rho(t)) +\widetilde u_x) \widetilde s \implies  -\int  \widetilde s Q'(x-\rho(t)) = \int \widetilde u_x \widetilde s.
\]
We conclude that $\rho'$ is of quadratic order, and satisfies
\[
\abs{\rho'(t)} \lesssim \left|\int \widetilde u_x \widetilde s\right| \lesssim \eta^2.
\]
This also is another proof of \eqref{eta2}.

\medskip

\textbf{Step 3.} In order to estimate the $\dot H^1$-norm of $\widetilde u$ we use \eqref{CpC} to re-write \eqref{a111_new} as
\be\label{EDO_aux}
\begin{aligned}
& ~{} \widetilde u_{x}(t,x)  + 2 \sin\left(\dfrac{\widetilde{Q}}{2}\right) \sin \left( \frac{\widetilde u(t,x)}{2}\right) \\
&~{} \qquad +2 \cos\left(\dfrac{\widetilde{Q}}{2}\right) \left( 1-\cos \left( \frac{\widetilde u(t,x)}{2}\right) \right) \cos\left( \frac{y}{2}\right)= F(t,x),
\end{aligned}
\ee
where $F$ is given by
\begin{align*}
F:= &~{} -  \widetilde{Q}_x + v +2 \cos\left(\dfrac{\widetilde{Q}}{2}\right) \cos\left(\dfrac{y}{2}\right) - 2  \sin\left(\dfrac{\widetilde{Q}}{2}\right) \left( \cos\left(\dfrac{y}{2}\right) - 1\right)\sin\left(\dfrac{\widetilde u}{2}\right) \\
=&~{} v + 2 \cos\left(\dfrac{\widetilde{Q}}{2}\right)  \left( \cos\left(\dfrac{y}{2}\right) -1\right) - 2  \sin\left(\dfrac{\widetilde{Q}}{2}\right) \left( \cos\left(\dfrac{y}{2}\right) - 1\right)\sin\left(\dfrac{\widetilde u}{2}\right) .
\end{align*}
In the last line, we have used \eqref{eqn:BT_Q} which implies that $\widetilde Q_x = 2\cos\left(\frac{\widetilde Q}{2}\right)$. Note that since $\widetilde u\in L^\infty (\R; H^1(\R))$, we have $\widetilde u$ bounded independently of time. Therefore, $F$ is bounded in $L^\infty(\R)$, uniformly in time. 

\medskip

Now we prove that the $L^2_x(I)$-norm of $F(t)$ goes to zero as $t\to+\infty$. In fact, 
\be\label{F_defi}
\begin{aligned}
\left\| F  \right\|_{L^2(I)}  &~{} \lesssim \|v\|_{L^2(I)} + \left\| \cos\left(\dfrac{\widetilde{Q}}{2}\right)\right\|_{L^2(I)} \left\|   \cos\left(\dfrac{y}{2}\right) -1  \right\|_{L^\infty(I)}\\
&~{} \quad+ \left\| \sin\left(\dfrac{\widetilde{Q}}{2}\right)\right\|_{L^\infty(I)} \left\| \cos\left(\dfrac{y}{2}\right) -1\right\|_{L^\infty(I)} \left\| \sin\left(\dfrac{\widetilde u}{2}\right)\right\|_{L^2(I)}.
\end{aligned}
\ee
On the other hand, the identities \eqref{sinQ_cosQ} imply that
\[
\left\| F  \right\|_{L^2(I)} \lesssim \|v\|_{L^2(I)} + \left\|   \cos\left(\dfrac{y}{2}\right) -1  \right\|_{L^\infty(I)}.
\]
Hence, using Theorem \ref{thmkmm2}, the inequalities \eqref{easy}, and the continuous embedding of $H^1(\R)$ into $L^\infty(\R)$ we obtain 
\[
\lim_{t\to+\infty}\left\Vert \cos\left(\dfrac{y}{2}\right)-1 \right\Vert_{L^\infty(I)}=0.
\]
Combining the previous limit and applying Theorem \ref{thmkmm2} again, we conclude 
\begin{align}\label{limit_I}
\lim_{t\to+\infty}\Vert F(t)\Vert_{L^2_x(I)}=0.
\end{align}
Actually, we can prove even more. From \eqref{Integration0}, \eqref{sinQ_cosQ} and the second line in \eqref{F_defi}, 
\[
\begin{aligned}
\int e^{-c_0|x|} F^2 (t,x)dx \lesssim &~{}  \int e^{-c_0|x|}v^2(t,x)dx \\
&~{} +  \int e^{-c_0|x|}   \sech^2(x -\rho(t))y^4(t,x) dx  +\int e^{-c_0|x|} \widetilde u^2 y^4(t,x) dx\\
\lesssim &~{}\int e^{-c_1|x|}v^2(t,x)dx +  \eta^2 \int e^{-c_1|x|} y^2(t,x)dx.
\end{aligned}
\]
Consequently, we obtain the stronger property
\be\label{Integrability_1}
\int \int e^{-c_0|x|} F^2 (t,x)dx dt <+\infty.
\ee
Assume now that $\|\widetilde u(t)\|_{L^2(I)}$ tends to zero as $t\to +\infty$. Using \eqref{EDO_aux} and \eqref{limit_I},
\[
\| \widetilde u_{x}(t) \|_{L^2(I)}  \lesssim  \| \widetilde u(t) \|_{L^2(I)} + \| F(t) \|_{L^2(I)} \to 0 
\]
as $t\to +\infty.$ This last result shows that we only need to prove $L^2_{loc}$ decay on $\widetilde u(t)$.

\medskip

\textbf{Step 4. Proof of AS.} Here we shall consider two cases.

\medskip
{\bf Subcase 1.} $|\rho(t_n)| \to +\infty$ for some $(t_n)$ tending to $+\infty$. With no loss of generality, we assume $\rho(t_n) \to +\infty$. Fix $x\in I$. Now the ODE \eqref{EDO_aux} reads
\[
\widetilde u_{x}(t,x)  - 2  \sin \left( \frac{\widetilde u(t,x)}{2}\right)  = \widetilde F (t,x)   ,
\]
where
\[
\begin{aligned}
\widetilde F(t,x):= &~{} F(t,x) -   2\left(1+ \sin\left(\dfrac{\widetilde{Q}}{2}\right)\right) \sin \left( \frac{\widetilde u(t,x)}{2}\right) \\
&~{} - 2 \cos\left(\dfrac{\widetilde{Q}}{2}\right) \left( 1-\cos \left( \frac{\widetilde u(t,x)}{2}\right) \right) \cos\left( \frac{y}{2}\right).
\end{aligned}
\]
Note that due to the fact that $\widetilde{u}\in L^\infty(\R,H^1(\R))$ and the explicit form of $\widetilde{Q}$ we deduce \[
\sup_{t\in\R}\|\widetilde F(t)\|_{L^2(\R)} <+\infty.
\]
Now, we conveniently rewrite \eqref{EDO_aux} as
\be\label{EDO_new}
\widetilde u_{x}(t,x)  + V(t,x) \widetilde u(t,x) =\widetilde F(t,x), 
\ee
where $V(t,x)$ is given by
\[
 V(t,x):=\begin{cases}- \frac2{\widetilde u(t,x)} \sin \left( \frac{\widetilde u(t,x)}{2}\right), & \widetilde u(t,x)\neq 0, \\
-1, & \widetilde u(t,x)=0.\end{cases}
\]
Clearly $V$ defined a bounded function in $(t,x)$. 
Thus, solving this ODE we obtain the explicit solution
\begin{align}\label{sol_EDO_u}
\widetilde u(t,x)   =   -\int_{x}^\infty e^{  \int_x^s V(t,\sigma)d\sigma } \widetilde{F}(t,s)ds.
\end{align}
Using that $\sup_{x\in\R }| \widetilde u(t,x)| \lesssim \eta \ll1$, uniformly for $t\in\R$, there exist $\nu=\nu(\eta)\ll1$ such that 
\[
-1-\nu\leq \sup_{x\in\R}V(t,x)\leq -1+\nu,
\]
uniformly for $t\in\R$. Replacing this in \eqref{sol_EDO_u} we obtain 
\[
\vert \widetilde u(t,x)\vert\leq\int_x^\infty e^{-c_0(s-x)}\vert \widetilde{F}(t,s)\vert ds=\int_0^\infty e^{-c_0w} \vert \widetilde{F}(t,w+x)\vert dw, \quad c_0:=(1-\nu).
\]
On the other hand, since all the functions involved belong to $L^\infty(\R;H^1(\R))$ we have 
\[
\Vert \widetilde{F}(t)\Vert_{L^\infty_x(\R)}\int_0^\infty e^{-c_0w}dw<\infty,
\]
uniformly for $t\in\R$. Finally, taking $L^2_x(I)$-norm and using that \eqref{limit_I} holds for any bounded interval, we get that
\begin{align*}
\lim_{n\to+\infty}\Vert \widetilde u(t_n)\Vert_{L^2_x(I)}&\leq \lim_{n\to\infty}\int_0^\infty e^{-c_0w}\Vert \widetilde{F}(t_n)\Vert_{L^2_x(I+w)}dw
\\ &\leq \int_0^\infty e^{-c_0w}\lim_{n\to\infty}\Vert \widetilde{F}(t_n)\Vert_{L^2_x(I+w)}dw =0,
\end{align*}
where we have used Minkowski's integral inequality and Dominated Convergence's Theorem, which concludes this step. Therefore, we conclude
\be\label{beee}
\lim_{n\to+\infty}\Vert(\widetilde{u}(t_n),\widetilde{s}(t_n))\Vert_{H^1_x(I)\times L^2_x(I)}=0.
\ee
This proves  \eqref{final_data0_0}.

\begin{rem}
Note that the in the case $\rho(t_n) \to +\infty$ only $\widetilde{u}(t_n)$ converges along a subsequence. The remaining part $\widetilde{s}(t)$ converges, locally on compact sets of space, along any subsequence of time, no matter if $\rho(t)$ stays bounded or not (see \eqref{epsilon_plus_limit}).
\end{rem}

{\bf Subcase 2. $\rho(t)$ is bounded.} In this case we have $e^{-(1-\varepsilon)\vert x-\rho(t)\vert} \leq C e^{-(1-\varepsilon)\vert x\vert}$ for all time, with $C>0$ depending on $\max_\R\rho(t)<\infty$. From \eqref{dot_rho_quadratic_2},
\be\label{cota_unif}
\vert \rho'(t)\vert\lesssim \int e^{-(1-\varepsilon)\vert x\vert}(v^2+y^2+y_x^2)(t,x)dx.
\ee
Now let us recall that by \eqref{Integration0} we have
\[ 
\int_0^\infty\!\!\! \int e^{-(1-\varepsilon)\vert x\vert}(v^2+y^2+y_x^2)(t,x)dxdt <+\infty,
\]
and hence, \eqref{cota_unif} leads to 
\[
|\rho(s_n) -\rho(s_m)| \to 0, \quad n,m\to +\infty,
\] 
for every sequence $(s_n)$ such that $s_n\to +\infty.$ Fix one such sequence $(s_n)$; we have $\rho(s_n)\to \bar\rho$ (this $\bar \rho$ still depends on the chosen sequence $(s_n)$, but we will prove that the whole $\rho(t)$ converges to this limit). Now, once again from \eqref{cota_unif},
\[
|\rho(s_n) -\rho(t)| \lesssim  \int_{t}^{s_n}\!\!\! \int e^{-(1-\varepsilon)\vert x\vert}(v^2+y^2+y_x^2)(t,x)dxdt<+\infty.
\]
Sending $n\to \infty$, and then $t\to +\infty$, we conclude $\rho(t)\to \bar \rho.$ This proves that, whenever $\rho(t)$ stays bounded, it must converge to a final position $\bar \rho.$

\medskip

Finally, we prove \eqref{final_data0_1}. Note that from \eqref{eqn_us}, \eqref{dot_rho_quadratic_2} and after integration by parts we have
\begin{align*}
\frac12\frac{d}{dt} \int  (\widetilde s^2 + \widetilde u^2 + \widetilde u_x^2)(t,x)\sech^{1+\varepsilon}(x)dx&=\int \big(\widetilde{s}_t\widetilde{s}+\widetilde{u}_t\widetilde{u}+\widetilde{u}_{tx}\widetilde{u}_x)\sech^{1+\varepsilon} (x)dx
\\ & \lesssim \int(\sin\widetilde{Q}-\sin(\widetilde{Q}+u))\widetilde{s}\sech^{1+\varepsilon} (x)dx
\\ & \quad +\int (\widetilde{u}\widetilde{s}+\vert \widetilde{u}_x\widetilde{s}\vert+\dot{\rho}\widetilde{u}_x\widetilde{Q}'')\sech^{1+\varepsilon} (x)dx
\end{align*}
Thus, by using again the standard inequality $2ab\leq a^2+b^2$ we deduce \begin{align*}
\frac12\frac{d}{dt} \int  (\widetilde s^2 + \widetilde u^2 + \widetilde u_x^2)(t,x)\sech^{1+\varepsilon}(x)dx& \lesssim\int (\widetilde{u}^2+\widetilde{u}_x^2+\widetilde{s}^2)\sech^{1+\varepsilon} (x)dx
\end{align*}
Using \eqref{QQQ}, \eqref{quadratic_u_yv} and  \eqref{10p3_1}, we simply obtain 
\[
\frac12\frac{d}{dt} \int (\sech^{1+\varepsilon} x )(\widetilde s^2 + \widetilde u^2 + \widetilde u_x^2)(t,x)dx \lesssim  \int e^{-(1-\varepsilon)|x|} (y^2+y_x^2+v^2)(t,x)dx.
\]
Consequently, from \eqref{Integration0} we conclude \eqref{beee} (and therefore, \eqref{final_data0_1}) exactly as it was done in \cite{KMM}.

\medskip

\appendix

\section{Proof of Lemma \ref{THMQzeroA}}\label{BT_kink_parity_appendix}

We follow the ideas in \cite{MP}, but with some important modifications. Let us consider the B\"acklund functionals introduced in \eqref{f1_new} and \eqref{f2_new} with $a=1$ and $\delta=0$ (i.e. $\beta=0$ from \eqref{a(beta)}):
\be\label{setting4p1}
\begin{aligned}
     0=&~{} \widetilde{\mathcal F}_1(\widetilde u,\widetilde s,y,v,0) =  \widetilde Q_x + \widetilde u_{x} -v - \cos\left(\dfrac{\widetilde Q +\widetilde u + y}{2}\right)-\cos\left(\dfrac{\widetilde Q +\widetilde u-y}{2}\right), 
    \\ 
    0=&~{} \widetilde{\mathcal F}_2(\widetilde u,\widetilde s,y,v,0) = \widetilde s -y_{x} - \cos\left(\dfrac{\widetilde Q +\widetilde u + y}{2}\right) +\cos\left(\dfrac{\widetilde Q +\widetilde u-y}{2}\right).
\end{aligned}
\ee
Recall that $ \widetilde Q_t =0$ in the case $\beta=0$ (see \eqref{Qt}). These functionals are well-defined, see Lemma \ref{Parity}, item (d).

\medskip

Let $(y,v)\in H^1_e\times L^2_e$ be small enough given perturbations  (maybe depending on time, but of size uniformly bounded for $t\in\R$). Notice that for any given triplet $(y,v, \widetilde u)\in H^1_e\times L^2_e\times H^1_o$, equation $L_o^2\ni \mathcal{F}_2\equiv 0$ is trivially solvable for $\widetilde s$ and defines a function in $L^2_o$. On the other hand, with a slight abuse of notation, 
\[
\widetilde{\mathcal{F}}_1: H^1_o(\R)\times L^2_o(\R) \times H^1_e(\R)\times L^2_e(\R) \to L^2_e(\R), \quad \widetilde{\mathcal F}_1=\widetilde{\mathcal F}_1(\widetilde u,\widetilde s,y,v),
\]
defines a $\mathcal{C}^1$ functional in a neighborhood of zero and due to Lemma \ref{prkk1} we have $\widetilde{\mathcal{F}}_1(0,0,0,0)\equiv0$. Therefore, in order to conclude the proof it is enough to show that the G\^ateaux derivative of $\mathcal{F}_1$ defines an invertible bounded linear operator with continuous inverse. In fact, notice that linearizing directly on the definition of $\mathcal{F}_1$ above and by using basic trigonometric identities we are lead to solve
\begin{align}\label{u_ODE_Q}
\widetilde u_x =- \sin\left( \frac{\widetilde Q}{2}\right)\widetilde u +f, \, \hbox{ for some }\, f\in L^2_e.
\end{align}
Here, $\sin\left(\frac{\widetilde Q}{2}\right)=\tanh x.$ Now, in order to solve equation \eqref{u_ODE}, we define $\mu_\beta(x)$ to be the solution of \[
\mu_{\beta,x}-\sin\left(\dfrac{\widetilde Q}{2}\right)\mu_\beta=0, \, \hbox{ that is } \quad \mu_\beta(x)=\cosh x.
\]
At this stage it is important to point out that $\mu_\beta(x)$ is an even function. On the other hand, due to the fact that both $\mu_\beta$ and $f$ are even functions, we conclude that there is only one odd function solving \eqref{u_ODE_Q}, which is given by
\begin{align}\label{u_sol_Q}
\widetilde u(x)=\dfrac{1}{\mu_\beta(x)}\int_{0}^x \mu_\beta(z) f(z)dz.
\end{align}
Finally, by using Young's inequality, the explicit form of $\widetilde u$ and the exponential growth of $\mu_\beta$ it is a straightforward checking that
\[
\Vert \widetilde u\Vert_{L^2(\R)}\lesssim \Vert f\Vert_{L^2(\R)}.
\]
We refer to \cite{MP} Section $6$ for a complete proof of the latter inequality in a similar context. Notice that in order to conclude that $\widetilde u\in H^1_o$ it only remains to prove that $\widetilde u_x\in L^2$. Nevertheless, this is a direct consequence of the explicit form of $u$ in \eqref{u_sol_Q} and the previous analysis. Therefore, we conclude the proof by applying the Implicit Function Theorem. \qed

\section{Proof of Lemma \ref{THMQzeroB}}\label{BT_kink_parity_2_appendix}

We follow the guidelines of the proof of Proposition \ref{THMQzeroA}, with minor but essential differences. Once again, we put in the framework of Lemma \ref{Parity}, item (d).

\medskip

Recall the setting of BT in \eqref{setting4p1}. Now we will consider $(\widetilde u,\widetilde s)\in H^1_o\times L^2_o$ be small enough given perturbations. Notice that for any given $(\widetilde u,y)\in  H^1_o \times H^1_e$, equation $\widetilde{\mathcal{F}}_1\equiv 0$ is trivially solvable for $v(\cdot)$ and defines a function in $L^2_e$. On the other hand, 
\[
\widetilde{\mathcal{F}}_2:H^1_e(\R)\times L^2_e(\R)\times H^1_o(\R)\times L^2_o(\R)\to L^2_o(\R),  \quad \widetilde{\mathcal F}_2=\widetilde{\mathcal F}_2(\widetilde u,\widetilde s,y,v),
\]
defines a $\mathcal{C}^1$ functional in a neighborhood of zero and due to Lemma \ref{prkk1} we have $\widetilde{\mathcal{F}}_2(0,0,0,0)\equiv0$. Therefore, linearizing directly on the definition of $\widetilde{\mathcal{F}}_2$ above and by using basic trigonometric identities we are lead to solve 
\begin{align}\label{y_ODE_Q}
y_x = \sin\left( \frac{\widetilde Q}{2}\right)y +f, \, \hbox{ for some }\, f\in L^2_o.
\end{align}
Note that unlike \eqref{u_ODE_Q} now we have a ``$-$'' sign in the right-hand side. As before, in order to solve equation \eqref{y_ODE_Q}, we define $\mu_\beta(x)$ to be the solution of 
\[
\mu_{\beta,x} + \sin\left(\dfrac{\widetilde Q}{2}\right)\mu_\beta=0, \qquad \hbox{ that is } \qquad \mu_\beta(x)=\sech x.
\]
Notice that since $\mu_\beta$ and $f$ are even and odd functions respectively we conclude
\[
\int_\R\mu_\beta(x) f(x)dx=0.
\]
Therefore, solving \eqref{y_ODE_Q} from $-\infty$ to $x$ we conclude that there is only one solution to \eqref{y_ODE_Q} which is given by
\begin{align}\label{y_sol_Q}
y(x)=\dfrac{1}{\mu_\beta(x)}\int_{-\infty}^x \mu_\beta(z) f(z)dz.
\end{align}
Finally, by using Young's inequality, the explicit form of $y$ and the exponential decay of $\mu_\beta$ it is a straightforward checking that
\[
\Vert y\Vert_{L^2(\R)}\lesssim \Vert f\Vert_{L^2(\R)} \quad \hbox{and}\quad \Vert y_x\Vert_{L^2(\R)}\lesssim \Vert f\Vert_{L^2(\R)}.
\]
We refer to \cite{MP} Section $6$ for a complete proof of the latter inequality in a similar context. Therefore, we conclude the proof by applying the Implicit Function Theorem.\qed

\section{Proof of Lemma \ref{conection_WB}}\label{A}

Recall that the wobbling kink is given by \eqref{wobblingK}
\be\label{wobb}
W_\bt(t,x):= 4\arctan e^x  +  4\arctan f,\quad f=\frac{g}{h},
\ee
where
\be\label{gh}
\begin{aligned}
 &g:=\bt(\sinh(x)\cos(\al t) - \sinh(\bt x)),\\
 &h:=\cosh(x)\cosh(\bt x) - \bt\sinh(x)\sinh(\bt x) - \bt\cos(\al t).
 \end{aligned}
\ee
Consequently,
\begin{align*}
W_{\beta,t}=\dfrac{4f_t}{1 + f^2}, \qquad W_{\beta,x}=2\sech(x) + \dfrac{4f_x}{1 + f^2}.
\end{align*}
Moreover, directly from \eqref{breather}
\begin{align*}
B_{\beta,t}= \dfrac{4\alpha^2\beta\cos(\alpha t)\cosh(\beta x)}{\alpha^2\cosh^2	(\beta x)+\beta^2\sin^2(\alpha t)},
\qquad B_{\beta,x}=\dfrac{-4\alpha\beta^2\sin(\alpha t)\sinh(\beta x)}{\alpha^2\cosh^2(\beta x)+\beta^2\sin^2(\alpha t)},
\end{align*}
and
\begin{align*}
\sin\left(\dfrac{B_\beta}{2}\right)=\dfrac{2\alpha\beta \sin(\alpha t)\cosh(\beta x)}{\alpha^2\cosh^2(\beta x)+\beta^2\sin^2(\alpha t)}, 
\qquad \cos\left(\dfrac{B_\beta}{2}\right)=\dfrac{\alpha^2\cosh^2(\beta x)-\beta^2\sin^2(\alpha t)}{\alpha^2\cosh^2(\beta x)+\beta^2\sin^2(\alpha t)}.
\end{align*}
On the other hand,
\[
\begin{aligned}
\sin\left(\dfrac{W_\beta}{2}\right)=&~{} \frac{1-f^2}{1+f^2}\sech(x) - \frac{2f\tanh(x)}{1+f^2}, \\
 \cos\left(\dfrac{W_\beta}{2}\right)= &~{}  -\frac{1-f^2}{1+f^2}\tanh(x) - \frac{2f\sech(x)}{1+f^2}.
\end{aligned}
\]
Then we recast \eqref{b1_new}-\eqref{b2_new} as follows,
\begin{align*}
&(\alpha^2\cosh^2(\beta x)+\beta^2\sin^2(\alpha t))\Big(2(1+f^2)\sech(x) + 4f_x\Big) - 4\alpha^2\bt\cos(\alpha t)\cosh(\beta x)(1+f^2)\\ 
&\qquad = 2(\alpha^2\cosh^2(\beta x) - \beta^2\sin^2(\alpha t)) \Big((1-f^2)\sech(x) - 2f\tanh(x)\Big),
\end{align*}
and
\begin{align*}
&4f_t(\alpha^2\cosh^2(\beta x)+\beta^2\sin^2(\alpha t)) + 4\al\bt^2\sin(\alpha t)\sinh(\beta x)(1+f^2)\\
&\qquad =- 4\alpha\bt\sin(\alpha t)\cosh(\beta x) \Big((1-f^2)\tanh(x) + 2f\sech(x)\Big),
\end{align*}
or in terms of $g,h,$ we get
\be\label{sist2}
\begin{aligned}
&(\alpha^2\cosh^2(\beta x)+\beta^2\sin^2(\alpha t))\Big(2(h^2+g^2)\sech(x) + 4(g_xh - gh_x)\Big)\\
&\qquad\qquad\qquad\qquad\qquad\qquad- 4\alpha^2\bt\cos(\alpha t)\cosh(\beta x)(h^2+g^2)\\ 
&= 2(\alpha^2\cosh^2(\beta x) - \beta^2\sin^2(\alpha t)) \Big((h^2-g^2)\sech(x) - 2gh\tanh(x)\Big),\\
\end{aligned}
\ee
\be\label{sist3}
\begin{aligned}
&4(g_th - gh_t)(\alpha^2\cosh^2(\beta x)+\beta^2\sin^2(\alpha t)) + 4\al\bt^2\sin(\alpha t)\sinh(\beta x)(h^2+g^2)\\
&= -4\alpha\bt\sin(\alpha t)\cosh(\beta x) \Big((h^2-g^2)\tanh(x) + 2gh\sech(x)\Big).
\end{aligned}
\ee
Now, having in mind that from \eqref{gh},
\begin{align*}
&g_x=\bt(\cosh(x)\cosh(\bt x) - \bt\cosh(\bt x)),\\ 
&g_t=-\al\bt \sinh(x)\sin(\al t),\\
&h_x=\al^2\sinh(x)\cosh(\bt x),\\
&h_t=\al\bt\sin(\al t),
\end{align*}
substituting in \eqref{sist2}-\eqref{sist3} and after easy manipulations, we conclude and the proof is complete. \qed

\medskip

\section{Proof of Remark \ref{convergencia_a_WK}}\label{B}

First of all, notice that by standard trigonometric identities we have
\[
\tan(\Theta-\overline{\Theta})=\dfrac{\Upsilon-\overline{\Upsilon}}{1-\Upsilon\overline{\Upsilon}} \quad \hbox{ where } \, \quad \Upsilon:=\left(\dfrac{\beta a_v+i\alpha a_v+1}{\beta a_v+i\alpha a_v-1}\right)\dfrac{e^x-e^{\gamma[\beta(x-vt)-i\alpha(t-vx)]}}{1+e^{x+\gamma[\beta(x-vt)-i\alpha(t-vx)]}}.
\]
Thus, after some easy manipulations we conclude that $\tan(\Theta-\overline{\Theta})=\tfrac{A_1}{A_2}$, where
\begin{align*}
A_1&=i(a_v^2-1)\cosh(x)\sin(\gamma\alpha(t-vx))
\\ & \quad -2ia_v\alpha\cos\big(\gamma\alpha(t-vx)\big)\sinh(x)-2ia_v\alpha\sinh\big(\gamma\beta(tv-x)\big)
\end{align*}
and
\begin{align*}
A_2&=-2a_v\beta \cos\big(\gamma\alpha(t-vx)\big)
\\ & \quad +\cosh(\gamma v\beta  t)\Big((1+a_v^2)\cosh(x)\cosh(\gamma\beta x)-2a_v\beta\sinh(x)\sinh(\gamma\beta x)\Big)
\\ & \quad +\sinh(\gamma v\beta t)\Big(2a_v\beta\sinh(x)\cosh(\gamma\beta x)-(1+a_v^2)\cosh(x)\sinh(\gamma\beta x)\Big).
\end{align*}
Notice that, if $v=0$, then $W_{\beta,v}\equiv W_\beta$, where $W_\beta$ is given by \eqref{wobblingK}.

\end{document}